\theoremstyle{plain}
\newtheorem{thm}{Theorem}[section]
\newtheorem*{conj*}{Conjecture}
\newtheorem{dfn}[thm]{Definition}
\newtheorem{lemma}[thm]{Lemma}
\newtheorem{prop}[thm]{Proposition}
\newtheorem{cor}[thm]{Corollary}
\newtheorem{THM}{Theorem}
\theoremstyle{remark}
\newtheorem{example}[thm]{Example}
\newtheorem{remark}[thm]{Remark}
\newcommand{\Foliation}[2][3]{\textsf{Fol}_{#2}(\mathbb P^{#1})}
\newcommand{\Fo}{\textsf{Fol}(d)}
\newcommand{\Fol}[2]{\textsf{Fol}({#2})}
\newcommand{\mb}{\mathbb}
\newcommand{\mc}{\mathcal}
\newcommand{\R}{\mb R}
\newcommand{\C}{\mb C}
\newcommand{\Z}{\mb Z}
\newcommand{\Q}{\mb Q}
\newcommand{\N}{\mb N}
\newcommand{\w}{\ell}
\newcommand{\p}{k}
\newcommand{\Proj}{\mathbb{P}} 
\newcommand{\Co}{\mathcal{C}}
\newcommand{\Ol}{\mathcal{O}}
\newcommand{\Dl}{\mathcal{D}}
\newcommand{\Si}{\mathcal{S}}
\newcommand{\F}{\mc F}
\newcommand{\G}{\mc G}
\newcommand{\Ho}{\mc H}
\newcommand{\Ro}{\mathcal{R}}
\DeclareMathOperator{\sing}{sing}
\DeclareMathOperator{\Pic}{Pic}
\DeclareMathOperator{\mdc}{mdc}
\DeclareMathOperator{\lcm}{lcm}
\newcommand{\KF}{{K_\F}}
\newcommand{\TF}{{T_\F}}
\newcommand{\NF}{{N_\F}}
\newcommand{\NFo}{{N_{\F_0}}}
\newcommand{\NFone}{{N_{\F_1}}}
\renewcommand{\NG}{{N_\G}}
\newcommand{\TP}{{T_{\Proj}}}
\numberwithin{equation}{section}
\numberwithin{equation}{section}       
\title[Density of Foliations
Without Algebraic Solutions   ]
      {On The Density of Foliations
Without Algebraic Solutions on Weighted Projective Planes }
\author{Ruben Lizarbe}
\begin{document}

\begin{abstract}
We prove that a generic holomorphic foliation on a weighted projective 
plane has no algebraic solutions when the
degree is big enough. We also prove an analogous result for
foliations on Hirzebruch surfaces.
\end{abstract}

\maketitle

\setcounter{tocdepth}{1}


\section{Introduction}

One of the most important results of Jouanolou's celebrated monograph
\cite{Jou} states that the set of holomorphic foliations on the
complex projective plane $\Proj^2$ of degree at least 2 which do not
have an algebraic solution is dense in the space of foliations.
This result for one-dimensional holomorphic foliations on $\Proj^n$
was proved by Lins Neto and Soares in \cite{LinsSoares}. In \cite{Cou}
the authors prove a generalization of Jouanolou's result for 
one-dimensional foliations over any smooth projective variety. On the
other hand, in \cite{Jorge} the author gives a different proof of
Jouanolou's theorem by restricting the ideas of \cite{Cou} to $\Proj^2$. In 
\cite{Falla}, one can find  three versions of
Jouanolou's Theorem for second order differential equations on
$\Proj^2$, for $k$-webs (first order differential equations) on
$\Proj^2$ and for webs with sufficiently ample normal bundle on
arbitrary projective surfaces.

The main theorem of this work provides an analogous version of Jouanolou's
Theorem for  foliations on weighted projective planes.

\begin{THM}\label{T:Weighted general}
Let $l_0,l_1,l_2$ be pairwise coprimes numbers with $1\leq l_0\le l_1 \le l_2$.
A generic foliation with normal $\Q$-bundle of degree $d$ on the weighted 
projective plane $\Proj(l_0,l_1,l_2)$ does not admit any invariant algebraic curve if
$d\geq l_0l_1l_2+l_0l_1+ 2l_2$.
\end{THM}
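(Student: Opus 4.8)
The plan is to follow the incidence-variety strategy of \cite{Cou} and \cite{Jorge}, transported to the weighted setting. First I would realize the parameter space of foliations with normal $\Q$-bundle $\NF=\Ol(d)$ as the projectivization $\Pj^N$ of a finite-dimensional vector space: such a foliation is given by a weighted-homogeneous $1$-form $\omega=A_0\,dx_0+A_1\,dx_1+A_2\,dx_2$ on the affine cone $\C^3$, where each $A_i$ is weighted-homogeneous of the degree dictated by $\NF$ and $\omega$ is annihilated by contraction with the weighted radial field $R=\sum_i l_i x_i\,\partial_{x_i}$ (the weighted Euler condition). Counting weighted monomials gives $N$ explicitly, with $N\sim d^2/(2l_0l_1l_2)$. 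An algebraic curve $C\subset\Proj(l_0,l_1,l_2)$ of degree $e$ is cut out by a weighted-homogeneous $f$ with $\deg f=e$, and $C$ is invariant by $\F$ exactly when $df\wedge\omega=f\,\Theta$ for some $2$-form $\Theta$. For fixed $f$ this is a homogeneous linear system in the coefficients of $\omega$, so the degree-$d$ foliations leaving $C$ invariant form a linear subspace $\Pj^N_C\subseteq\Pj^N$.

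Next I would form, for each $e\ge 1$, the incidence variety
\[
 I_e=\{(\F,C):\deg C=e,\ C\text{ invariant by }\F\}\subseteq \Pj^N\times|\Ol(e)|,
\]
and examine its two projections. The second projection $\pi_2\colon I_e\to|\Ol(e)|$ has fiber $\Pj^N_C$ over $C$, so the fiber-dimension theorem yields $\dim I_e\le \dim|\Ol(e)|+\max_C\dim\Pj^N_C$. The target of the whole argument is the strict inequality $\dim I_e<N$ for every $e$; this makes the first projection $\pi_1\colon I_e\to\Pj^N$ non-dominant, so its image is contained in a proper closed subvariety $B_e\subsetneq\Pj^N$. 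Since any foliation admitting an algebraic solution admits an irreducible one (each reduced component of an invariant curve is invariant), the set of foliations with some algebraic solution lies in $\bigcup_{e\ge 1}B_e$, a countable union of proper Zariski-closed subsets. Over the uncountable field $\C$ the complement of such a union is dense, which is precisely the asserted genericity.

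The main obstacle is the uniform estimate $\dim I_e<N$, and here the difficulty is genuine because the leading terms cancel. The dimension of $|\Ol(e)|$ grows like $e^2/(2l_0l_1l_2)$, while the codimension of the fiber $\Pj^N_C$ is governed by the rank of the cofactor map $\omega\mapsto (df\wedge\omega)\bmod f$, whose cokernel I would compute from the Hilbert function of the weighted coordinate ring $\C[x_0,x_1,x_2]/(f)$; this codimension also grows like $e^2/(2l_0l_1l_2)$. Thus the two quadratic contributions to $\dim I_e$ annihilate each other and the outcome is decided by the linear and constant terms, which I would pin down by a Riemann--Roch / Koszul computation, carefully tracking the $\Q$-divisor corrections coming from the quotient singularities of $\Proj(l_0,l_1,l_2)$ and the cofactor degree forced by the relation between $\KF$ and $\NF$. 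It is at this level of precision that the hypothesis $d\ge l_0l_1l_2+l_0l_1+2l_2$ is consumed: it is exactly the threshold making the residual lower-order estimate negative, hence $\dim I_e<N$, for all $e\ge 1$ at once. Two boundary regimes would need separate care: small $e$, where low-degree curves may pass through the singular points and the naive decomposition $\omega=h\,df+f\,\eta$ of invariant forms breaks down, and the largest relevant $e$, where the fiber could a priori be unexpectedly large; I would verify the estimate directly in these ranges to close the argument.
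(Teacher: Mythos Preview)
Your proposal takes a genuinely different route from the paper's proof, and as written it has a real gap.

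\textbf{What the paper does.} The paper does \emph{not} carry out a dimension count on the incidence variety $I_e$. Instead it argues as follows. For each $n$ the locus $\Co_n(d)\subset\Fo$ of foliations admitting an invariant curve of degree $n$ is closed. To show $\Co_n(d)\neq\Fo$, the paper studies the universal singular set $\Si_X(d)=\{(x,\F):x\in\sing(\F)\cap X\}$ over the smooth locus $X$, proves it is irreducible of dimension equal to $\dim\Fo$ (this uses only $d>l_1l_2$), and deduces: if $\Co_n(d)=\Fo$ then either (i) through one of the coordinate points $p_i$ there passes an $\F$-invariant curve of degree $n$ for every $\F$, or (ii) through every singular point of every $\F$ in the smooth locus there passes such a curve. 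Both alternatives are then excluded by \emph{explicit examples}: a pull-back of a logarithmic foliation on $\Proj^2$ with four invariant lines gives a foliation of normal degree $l_0l_1l_2+|\w|$ with smooth singularities admitting no algebraic separatrix, and a second family built from $(x_1^{l_2}-1)x_0^{j_0-1}dx_0-a(x_0^{l_2}-1)x_1^{j_1-1}dx_1$ produces foliations with no algebraic separatrix through $p_i$. The bound $d\ge l_0l_1l_2+l_0l_1+2l_2$ is precisely the minimal degree from which both families can be reached by multiplying by suitable monomials; it is not the output of any asymptotic estimate.

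\textbf{Where your sketch breaks.} Your argument hinges entirely on the inequality $\dim I_e<N$ holding for every $e\ge 1$, and you do not prove it. You correctly observe that the quadratic terms in $e$ cancel, so the outcome is controlled by lower-order terms; but you then simply assert that the hypothesis $d\ge l_0l_1l_2+l_0l_1+2l_2$ is ``exactly the threshold'' making the residual negative. There is no computation supporting this, and in fact there is no reason to expect a pure dimension count to produce this particular constant: in the paper the constant comes from the degrees of concrete example foliations, not from Riemann--Roch. Moreover, controlling $\max_C\dim\Pj^N_C$ uniformly in $e$ requires lower-bounding the rank of the cofactor map for \emph{every} weighted curve $C$, including highly singular and non-reduced ones passing through the quotient singularities; your proposal to ``verify the estimate directly'' in the boundary regimes is not an argument. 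If you want to pursue the incidence-variety route you must either actually perform the weighted Hilbert-function computation with all correction terms and show the inequality for all $e$, or replace the dimension count by an argument that each $\Co_n(d)$ is a \emph{proper} closed set---which is exactly what the paper achieves via examples rather than via counting.
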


The bound above is not sharp. When $l_0=l_1=1$ and $l_2=k>1$, we denote $\Proj(1,1,k)$ 
by $\Proj^2_{k}$, for which there is 
a more precise version of Theorem \ref{T:Weighted general}  that 
is sharp.

\begin{THM}\label{T:Weighted special}
A generic foliation with normal $\Q$-bundle of degree $d$ on
$\Proj^2_k$ with $k\geq2$ does not admit any invariant
algebraic curve if $d\geq 2k+1$. Moreover, if $d<2k+1$ any
foliation with normal $\Q$-bundle of degree $d$ on
$\Proj^2_k$ admits some invariant algebraic curve.
\end{THM}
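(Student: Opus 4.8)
The plan is to prove the two assertions by the same numerical mechanism, governed in both cases by the pencil of degree-one curves through the singular point $[0:0:1]$: the generic non-existence ($d\ge 2k+1$) through a dimension count on incidence varieties, and the forced existence ($d\le 2k$) through a tangency computation with the natural rational fibration. For the first assertion I would follow the scheme of \cite{Cou,Jorge}. Write $\Fo=\Proj(V_d)$ for the space of foliations with normal $\Q$-bundle $\Ol(d)$, presented as weighted-homogeneous integrable twisted $1$-forms $\omega\in H^0(\Proj^2_k,\Omega^1\otimes\Ol(d))$ modulo scalars. For each integer $e\ge 1$ let $I_e=\{(\F,C): C \text{ is }\F\text{-invariant and } \deg C=e\}$ be the incidence variety lying over the linear system $|\Ol(e)|$, and let $\Sigma_e\subset\Fo$ be its image. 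Since $\Fo$ is irreducible, it suffices to prove $\dim\Sigma_e<\dim\Fo$ for every $e\ge 1$: then $\bigcup_{e\ge 1}\Sigma_e$ is a countable union of proper subvarieties, and its complement is exactly the generic set of foliations admitting no invariant algebraic curve.

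The heart of the matter is the estimate of $\dim I_e$. Fixing $C=\{f=0\}$ of degree $e$, the foliations leaving $C$ invariant form the linear subspace of those $\omega$ for which $df\wedge\omega$ is divisible by $f$; I would compute its codimension in $V_d$ from the cohomology of the relevant twisted sheaves on $\Proj^2_k$ (equivalently on the resolution $\mathbb{F}_k$). Bounding the fibre dimension of $I_e\to|\Ol(e)|$ in this way and adding $\dim|\Ol(e)|$ reduces $\dim\Sigma_e<\dim\Fo$ to an explicit inequality in $d,e,k$. I expect the extremal case to be $e=1$, namely the pencil of degree-one curves $\{\beta x=\alpha y\}$ (the fibres of $[x:y:z]\mapsto[x:y]$) through the singular point $p=[0:0:1]$; making this one count sharp is what pins the constant at $d=2k+1$, while for larger $e$ the greater flexibility of $C$ should be more than offset by the stronger invariance constraint.

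For the forced existence when $d\le 2k$ I would argue geometrically via the rational fibration $\pi\colon\Proj^2_k\dashrightarrow\Pj^1$, $[x:y:z]\mapsto[x:y]$, whose foliation $\G$ has $\NG=\Ol(2)$ and hence $\KG=\Ol(-k)$. If $\F=\G$ every fibre is invariant and we are done, so assume $\F\ne\G$; then $\Tang(\F,\G)$ is an effective divisor of class $\NF\otimes\KG=\Ol(d-k)$, of degree $d-k\le k$. The key point is that every fibre of $\pi$ passes through $p$, which is forced to be a singularity of $\F$. Resolving $p$ by $\sigma\colon\mathbb{F}_k\to\Proj^2_k$, I would show that the exceptional $(-k)$-section $C_0$ is $\sigma^*\F$-invariant (its negative self-intersection leaving no room for it to be merely tangent) and then track the contribution of $p$ to the tangency. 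The computation should show that, once $d\le 2k$, the divisor $\Tang(\F,\G)$ cannot meet the pencil of fibres in isolated points only, so some fibre lies entirely in it and is therefore $\F$-invariant.

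The main obstacle is the uniform dimension estimate of the second paragraph: bounding the codimension of $\{\F:C\text{ invariant}\}$ in a way valid for every degree $e$ and every modulus of $C$, while correctly handling non-reduced and reducible $C$ and the weighted intersection theory at $p$ that makes the constant sharp rather than merely sufficient. In the forced-existence half the delicate step is the same weighted bookkeeping seen from $\mathbb{F}_k$: pinning down the index of $\sigma^*\F$ along $C_0$ and the exact local contribution at $p$ is what produces the clean threshold $d=2k+1$ and simultaneously accounts for both halves of the statement.
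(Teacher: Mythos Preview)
Your treatment of the second assertion ($d\le 2k$) is essentially the paper's argument, just phrased in terms of $\Tang(\F,\G)$ rather than the algebraic multiplicity at $p_2$: the paper shows directly (Proposition~\ref{P:Weighted special} via Corollary~\ref{C:Riccati}) that any saturated $\F$ with $k<d\le 2k$ has multiplicity $r=d-k$ at $p_2$, so $\pi^*\F$ is Riccati with $E$ invariant, and hence some fibre through a singularity on $E$ is invariant. Your resolution-and-tangency picture is the same mechanism.

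For the first assertion, however, your plan diverges from the paper and from \cite{Cou,Jorge}, and the divergence is exactly where your acknowledged ``main obstacle'' sits. The scheme of \cite{Cou,Jorge} (and of this paper) is \emph{not} a direct dimension count on the incidence varieties $I_e\to|\Ol(e)|$. It is instead: (i) prove that the universal \emph{singular} set $\Si_X(d)\subset X\times\Fo$ is irreducible of dimension $\dim\Fo$; (ii) if $\Co_n(d)=\Fo$ for some $n$, use Camacho--Sad to force every singularity (in $X$ or at some $p_i$) to lie on an invariant curve of degree $n$, i.e.\ $\Si_X(d)\subset\Dl_n(d)$ or $\Si_{p_i}(d)\subset\Dl_n(d)$; (iii) contradict this by exhibiting a \emph{single} explicit foliation of normal degree $d$ with a singularity through which no algebraic separatrix passes. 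Step (iii) is what produces the sharp threshold: for $d=2k+1$ the paper writes down a specific $\F_1$ (Lemma~\ref{L:excepcionalcase}) with a saddle-node configuration forcing any invariant curve through $[1{:}1{:}1]$ into two fixed lines, and a separate family (Lemma~\ref{L:nondicriticalanddicrital}, Corollary~\ref{C:particularcase}) handles the point $p_2$.

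Your proposed route---bounding $\dim\{\F:C\text{ invariant}\}$ uniformly over all $C$ of all degrees $e$ via cohomology on $\mathbb F_k$---is a genuine alternative in principle, but you have not supplied the estimate, and it is precisely this uniform-in-$e$ bound that is hard to make sharp. Identifying $e=1$ as extremal is plausible heuristics, not a proof; for large $e$ the ``greater flexibility of $C$ offset by the stronger constraint'' is exactly the balance that direct parameter counts struggle to control (reducible and non-reduced $C$, curves through $p_2$, etc.). The paper's counterexample method sidesteps this entirely: one never estimates $\dim I_e$ at all, and the sharp constant $2k+1$ falls out of the explicit normal form of $\F_1$. As written, your proposal for $d\ge 2k+1$ has a real gap at the step you yourself flag.
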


In both statements,  by generic we mean the set of foliations
having no invariant curves is the complement of a
countable union of algebraic closed proper subsets.

It is worth mentioning that there are several works about foliations on weighted projective spaces, see 
\cite{CoRoSoa}, \cite{Correa} and \cite{MaCoRo}.

It is well known that the minimal resolution of the
weighted projective planes $\Proj^2_k$, $k\geq 2$, are
the Hirzebruch surfaces
$\mb F_k=\Proj(\Ol_{\Proj^1}\oplus\Ol_{\Proj^1}(k))$ (see
\cite{Reid}). Our next result is a generalized version of
Jouanolou's Theorem for foliations on Hirzebruch surfaces.

\begin{THM}\label{T:Hirzebruch}
A generic foliation with normal bundle of bidegree $(a,b)$ on
$\mb F_{k}$ does not admit any invariant algebraic curve if $b\geq3$
and $a\geq kb+2$. Moreover, if $a<kb+2$ or $b<3$ then any
foliation with normal bundle of bidegree $(a,b)$ on $\mb F_{k}$
admits some invariant algebraic curve.
\end{THM}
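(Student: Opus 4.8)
The plan is to parametrize foliations and rule out invariant curves one divisor class at a time. Fix the ruling $\pi\colon\mathbb{F}_k\to\Proj^1$, let $F$ be a fiber and $C_0$ the section with $C_0^2=-k$, so that $\KX=\mathcal{O}(-2C_0-(k+2)F)$ and $\Omega^1_{\mathbb{F}_k/\Proj^1}=\mathcal{O}(-2C_0-kF)$. Writing the normal bundle as $\NF=\mathcal{O}(bC_0+aF)$, the hypotheses read $\NF\cdot F=b\ge 3$ and $\NF\cdot C_0=a-bk\ge 2$. A foliation of bidegree $(a,b)$ is a class $[\omega]\in\Proj H^0(\mathbb{F}_k,\Omega^1_{\mathbb{F}_k}\otimes\NF)$ with isolated singularities, and I first note that it suffices to exclude \emph{irreducible reduced} invariant curves: every component of an invariant curve is invariant, so a foliation admits an invariant algebraic curve iff it admits an irreducible reduced one. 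Since the effective cone of $\mathbb{F}_k$ is generated by $C_0$ and $F$, the relevant classes $D=\alpha C_0+\beta F$ with $\alpha,\beta\ge 0$ form a countable set, and it is enough to show that for each such $D$ the locus $V_D$ of foliations admitting an invariant reduced irreducible curve in $|D|$ lies in a proper Zariski-closed subset; their union is then a countable union of proper closed subsets, whose complement is the desired generic set.

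For fixed reduced $C\in|D|$, invariance of $C$ by $[\omega]$ is the vanishing of the image of $\omega$ under the restriction-and-projection map
\[
r_C\colon H^0(\mathbb{F}_k,\Omega^1_{\mathbb{F}_k}\otimes\NF)\longrightarrow H^0\bigl(C,\Omega^1_C\otimes\NF|_C\bigr)
\]
coming from the conormal sequence $0\to N_C^\ast\to\Omega^1_{\mathbb{F}_k}|_C\to\Omega^1_C\to 0$, so the foliations leaving $C$ invariant form $\Proj(\ker r_C)$. Forming the incidence variety over the locus of reduced irreducible members of $|D|$ and projecting gives $\dim V_D\le\dim|D|+\dim\Proj H^0(\Omega^1_{\mathbb{F}_k}\otimes\NF)-\rank r_C$, so $V_D$ is proper once $\rank r_C>\dim|D|$ for generic $C\in|D|$. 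When $\NF$ is positive enough along $C$ the two maps factoring $r_C$ are surjective on sections (the vanishings $H^1(\Omega^1_{\mathbb{F}_k}\otimes\NF(-C))=0$ and $H^1(C,N_C^\ast\otimes\NF|_C)=0$), and Riemann–Roch on a smooth $C$ gives $\rank r_C=\NF\cdot C+g(C)-1$. Comparing with $\dim|D|=h^0(\mathcal{O}(C))-1=\tfrac12 C\cdot(C-\KX)$ in the unobstructed range collapses the inequality to the single clean criterion
\[
(\NF+\KX)\cdot C=\KF\cdot C>0 .
\]
Here $\KF=(b-2)C_0+(a-k-2)F$, so $\KF\cdot C=\alpha\,(a-(b-1)k-2)+\beta\,(b-2)$, and the hypotheses give $b-2\ge 1$ and $a-(b-1)k-2\ge bk+2-(b-1)k-2=k\ge 1$; hence $\KF\cdot C>0$ for all $\alpha,\beta\ge 0$ not both zero. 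The two coefficients vanish exactly at $b=2$ and at $a=bk+1$, which identifies $F$ and $C_0$ as the extremal classes and explains the sharpness of the bound.

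For the converse statement I use the relative cotangent sequence twisted by $\NF$,
\[
0\to\mathcal{O}\bigl(bC_0+(a-2)F\bigr)\to\Omega^1_{\mathbb{F}_k}\otimes\NF\xrightarrow{\ p\ }\mathcal{O}\bigl((b-2)C_0+(a-k)F\bigr)\to 0 .
\]
If $a<bk+2$ then $\NF\cdot C_0\le 1$, so the projection of $\omega|_{C_0}$ to $\Omega^1_{C_0}\otimes\NF|_{C_0}\cong\mathcal{O}_{\Proj^1}(\NF\cdot C_0-2)$ is a section of a negative-degree bundle, hence zero, and $C_0$ is invariant for every $\omega$. If $b\le 1$ then $\NF\cdot F\le 1$ forces every fiber to be invariant by the same argument applied to $F$. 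Finally, for $b=2$ with $a\ge bk+2$ (the sub-case $a<bk+2$ being already settled) the quotient line bundle is $\mathcal{O}((a-k)F)=\pi^\ast\mathcal{O}(a-k)$ with $a-k\ge k+2>0$: either $p(\omega)=0$, whence $\omega$ lies in the sub-bundle $\pi^\ast\Omega^1_{\Proj^1}\otimes\NF$ and all fibers are invariant, or $p(\omega)\ne 0$ is a pullback section vanishing along the fibers over its $\ge 1$ zeros, each of which is an invariant fiber. This is exactly where $b\ge 3$ is needed: for $b\ge 3$ the quotient is no longer a pullback, its zero locus is a genuine multisection realizing the tangency of $\omega$ with $\pi$, and no fiber is forced to be invariant.

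The main obstacle is making the rank estimate uniform. The criterion $\KF\cdot C>0$ governs the \emph{generic} smooth member, but $|D|$ may possess only singular or nonreduced members, on which $\rank r_C$ can drop; one must therefore establish the vanishings $H^1(\Omega^1_{\mathbb{F}_k}\otimes\NF(-C))=0$ and $H^1(C,N_C^\ast\otimes\NF|_C)=0$ (and the unobstructedness used for $h^0(\mathcal{O}(C))$) for \emph{every} reduced irreducible $C\in|D|$, and handle singular $C$ through its normalization, where the genus drop and conductor contribute only favorable correction terms. The delicate classes are precisely the small ones near $F$ and $C_0$, where the cohomology on $\mathbb{F}_k$ must be computed by hand; all larger classes lie comfortably inside the range where the term $\NF\cdot C$ dominates and the estimate is automatic.
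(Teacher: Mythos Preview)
Your approach is genuinely different from the paper's. You run a direct dimension count: for each effective class $D$ on $\mb F_k$, bound the locus $V_D\subset\Ro(a,b)$ of foliations with an invariant curve in $|D|$ via the rank of the restriction--projection map $r_C$, and reduce (modulo several cohomology vanishings) to the numerical criterion $\KF\cdot C>0$. The paper instead follows the Jouanolou--Coutinho--Pereira template: it proves the universal singular set $\Si(a,b)\subset\mb F_k\times\Ro(a,b)$ is irreducible of dimension $\dim\Ro(a,b)$ (Proposition~\ref{P:Hirreducibleset}); shows every invariant curve meets $\sing(\G)$ via Camacho--Sad and the vanishing formula (Lemma~\ref{L:setsingular_nonempty}); and then exhibits \emph{one} explicit foliation possessing a singularity through which no algebraic curve passes, obtained by pulling back the foliation $\F_1$ of Lemma~\ref{L:excepcionalcase} from $\Proj^2_k$. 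No class-by-class cohomology is needed. Your converse argument via the relative cotangent sequence is correct and arguably cleaner than the paper's Proposition~\ref{P:Hircurve}, which instead uses Baum--Bott to locate a singularity and then the tangency formula to force the fiber through it to be invariant when $b\le 2$.

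The forward direction of your proposal, however, is only an outline, and the gap you flag at the end is more serious than you suggest. First, controlling the \emph{generic} rank of $r_C$ over $|D|$ is not enough: the incidence correspondence can have large components over special strata $Z\subset|D|$ where $\rank r_C$ drops, and you would need $\dim Z<\rank r_C|_Z$ on every such stratum. Second, and contrary to your remark that only the small classes near $F$ and $C_0$ are delicate, the vanishing $H^1(C,N_C^\ast\otimes\NF|_C)=0$ you invoke for surjectivity requires $(\NF-K_X)\cdot C>2C^2$, which \emph{fails} for all sufficiently large $C$ (the right-hand side grows quadratically in the class while the left grows linearly). So the surjectivity of $r_C$ is unavailable exactly where $|D|$ is largest, and the clean reduction to $\KF\cdot C>0$ does not go through as stated. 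This is repairable---for instance by first bounding a priori the class of an irreducible invariant curve in terms of $\NF$, or by a direct lower bound on $\rank r_C$ that does not pass through surjectivity---but as written the argument does not close, whereas the paper's single-counterexample method sidesteps the issue entirely.
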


\subsection{Organization of the paper}
Section \ref{foliations}, we introduce the notions of weighted projective planes and foliations 
on weighted projective planes. 
Section \ref{algebraicleaves}, we study foliations on weighted projective planes with some algebraic solution. 
We show that the low degree foliations correspond to rational, logarithmic and Riccati foliations.  
Section \ref{Theorem}, we prove our first and second main results about the version of 
Jouanolou's theorem for foliations on weighted projective planes.
Section \ref{Hirzebruch}, we introduce the notions of foliations 
on Hirzebruch surfaces and prove our third main result Theorem \ref{T:Hirzebruch}. 


\section{Holomorphic foliations on $\Proj(l_0,l_1,l_2)$}\label{foliations}

\subsection{Weighted projective planes}

Let $\w=(l_0,l_1,l_2)$ be a vector of positive integers with
$l_0,l_1,l_2$ pairwise coprimes, and call $\w$ a {\it weighted
vector}. Consider the $\C^*$-action on $\C^{3}\backslash \{0\}$
given by
\begin{equation}\label{E:action}
 t\cdot(x_0,x_1,x_2)=(t^{l_0}x_0,t^{l_1}x_1,t^{l_2}x_2),
\end{equation}
where $t\in \C^*$ and $(x_0,x_1,x_2)\in \C^3\backslash \{0\}$. 
The {\em weighted projective plane} of type $(l_0,l_1,l_2)$ is the quotient space
$\Proj(l_0,l_1,l_2)=(\C^{3}\backslash \{0\}/\sim)$, induced by the above action. 
Consider the decomposition $\Proj(l_0,l_1,l_2)=U_0\cup U_1\cup U_2$,
where $U_i$, $0\leq i \leq 2$, is the open set consisting of all elements
$[x_0:x_1:x_2]$ with $x_i\neq 0$. For a fixed $i$, let $j<k$ be the elements of $\{0,1,2\}\backslash\{i\}$
and denote by $\mu_{l_i}\subset \C^*$ the
subgroup of $l_i$-th roots of the unity, which acts on 
$\C^2$ by $g\cdot (x,y)=(g^{l_j}x,g^{l_k}y)$, for any $g\in \mu_{l_i}$.  
We can define an isomorphism
$\psi_i:U_i\rightarrow\C^2/ \mu_{l_i}$ by
\begin{equation}\label{E:change}
\psi_i([x_0:x_1:x_2])=\left(\frac{x_j}{x_i^{l_j/l_i}},\frac{x_k}{x_i^{l_k/l_i}}\right)_{l_i}.
\end{equation}
Observe that  $\psi_i$ is independent of the choice of the $l_i$-th root of $x_i$.

\begin{remark}\label{R:projective plane}
When $l_0=l_1=l_2=1$ one obtains the usual complex projective plane, in which case we write $\Proj^2$ 
instead of $\Proj(1,1,1)$.
\end{remark}
We can also view the weighted projective plane as follows. 
Consider the group $G=\mu_{l_0}\times \mu_{l_1}\times \mu_{l_2}$, acting 
on $\Proj^2$ by $(g_0,g_1,g_2)\cdot[x_0:x_1:x_2]=[g_0x_0:g_1x_1:g_2x_2]$. 
The natural map $\varphi: \C^3\rightarrow \C^3$ given by
\begin{equation}\label{E:map}
\varphi(x_0,x_1,x_2)=(x_0^{l_0},x_1^{l_1},x_2^{l_2}),
\end{equation}
induces an isomorphism from the set of orbits 
$\Proj^2/G$ to $\Proj(l_0,l_1,l_2)$. 

From now on we will assume that $1\leq l_0 \leq l_1 \leq l_2$. For simplicity of notation, 
we write $\Proj$ instead of $\Proj(l_0,l_1,l_2)$ as in the terminology of
\cite{Dolgachev}. The weighted projective plane $\Proj$ is a singular surface with at worst 
quotient singularities.
\subsection{Twisted differentials defining foliations}

If we denote by $\Omega^{[1]}_{\Proj}$ the sheaf of reflexive differentials on $\Proj$ ($\Omega^{[1]}_{\Proj}= 
\overline{\Omega^1_{\Proj}}$ in the notation of \cite[Section 2.1]{Dolgachev}) and by $\Ol_{\Proj}(d)$ the sheaf 
of $\Ol_{\Proj}$-modules associated to the module of quasi-homogeneous polynomials of degree $d$ 
(\cite[Section 1.4]{Dolgachev} ) then  
we can use the following twisting of Euler's sequence \cite[Section 2.1]{Dolgachev},
\[
    0 \to \Omega^{[1]}_{\Proj}(d) \to   \bigoplus _{i=0}^{2}\mathcal O_{\mathbb P}(d-l_i) \to 
    \mathcal O_{\mathbb P}(d) \to 0 \, ,
\]
to identify $H^0(\Proj, \Omega^{[1]}_{\Proj}\otimes \Ol_{\Proj}(d))=H^0(\mathbb P, \Omega^{[1]}_{\mathbb P}(d))$ with the $\mathbb C$-vector space of quasi-homogenous polynomial $1$-forms
\[
      A_0(x_0,x_1, x_2) dx_0+ A_1(x_0,x_1, x_2) dx_1+ A_2(x_0,x_1, x_2) dx_2
\]
with quasi-homogeneous coefficients $A_i$ of degree $d-l_i$, for $i=0,1,2$,  which are annihilated by the weighted radial vector field
\[
    R = l_0x_0 \frac{\partial}{\partial x_0}+ l_1x_1 \frac{\partial}{\partial x_1}+ l_2x_2 \frac{\partial}{\partial x_2} \, .
\]


A {\it foliation} $\F$ on $\Proj$ is defined by a section $\omega$ of $\Omega^{[1]}_{\Proj}\otimes \Ol_{\Proj}(d)$ 
with {\it normal sheaf} $\NF=\Ol_{\Proj}(d)$.  

The singular set of $\F$, denoted by $\sing(\F)$ is the subset of $\Proj$ 
 formed by zeros of $\omega$. A foliation $\F$ is called {\it saturated} if $\sing(\F)$ is finite.
\begin{remark}\label{R:degreefol}
In the case $\w=(1,1,1)$, we have $\deg(N\F)=\deg(\F)+2,$
where $\deg(\F)$ is the number of tangencies with general line. 
\end{remark}

By duality, we also have the following twisting of Euler's sequence 
\[
    0 \to \mathcal O_{\mathbb P}(d-|\w|)  \to   \bigoplus _{i=0}^{2}\mathcal O_{\mathbb P}(d-|\w|+l_i) \to 
        \TP(d-|\w|) \to 0 \, ,
\]
where $|\w|=l_0+l_1+l_2$. Then a foliation $\F$ of normal degree $d$ on $\Proj$ 
can also be given by a quasi-homogeneous vector field 
$X\in H^0(\Proj, \TP\otimes \Ol_{\Proj}(d-|\w|))=H^0(\mathbb P, \TP(d-|\w|))$, that is,   
\[
X=B_0\frac{\partial}{\partial x_0}+B_1\frac{\partial}{\partial x_1}+
B_2\frac{\partial}{\partial x_2}, 
\]
which is not a multiple of $R$, where $B_i$ are
quasi-homogeneous polynomials of degree $d-|\w|+l_i$, respectively. By contracting the volume 
form $dx_0\wedge dx_1\wedge dx_2$ with the vector fields $X$ and $R$, we obtain a 1-form  
\[
\omega=i_Xi_Rdx_0\wedge dx_1\wedge dx_2 \in H^0(\mathbb P, \Omega^{[1]}_{\mathbb P}(d)),
\]
that defines the foliation $\F$. This vector field $X$ also induces the sheaf $\KF=\Ol_{\Proj}(d-|\w|),$ 
called \textit{the cotangent or canonical sheaf} of $\F$. The dual $\KF$ is $\TF$ called 
{\it the tangent sheaf of $\F$}. 
Note that $\NF$, $\KF$ and $\TF$  are no longer genuine line bundles but are elements of 
$\Pic(\Proj)\otimes\Q $. 

\subsection{Space of foliations and interpretation}

Two 1-forms $\omega$ and $\omega'$ define the same foliation if and only if they differ 
by multiplication by a nonzero complex constant. Therefore,      
the {\it space of foliations with normal $\Q$-bundle of
degree $d$} on $\Proj$ is denoted by 
\[
\Fo:= \Proj H^0(\Proj,\Omega^{[1]}_{\Proj}(d)).
\]
We denote by $\Foliation[2]{d}^{G}$ the space of $G$-invariant foliations of degree $d$ on $\Proj^2$. 
Note that the pullback of a quasi-homogeneous 1-form by $\varphi$ (as in 
\ref{E:map}) is a homogeneous 1-form invariant by $G$. 
Consider the natural map:   

\begin{equation}\label{E:isomorphism}
\begin{array}{rcl}
\varphi^*: \Fol_{d+2}&\rightarrow& \Foliation[2]{d}^{G} \\
\,[\omega] \quad &\mapsto &[\varphi^*\omega] .
\end{array}
\end{equation}

It is a direct computation to show that the following holds.  

\begin{lemma}\label{L:isomorphism} The map $\varphi^*$ is an isomorphism.  
\end{lemma}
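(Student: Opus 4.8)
The plan is to realize $\varphi^*$ as the projectivization of a linear map on global sections and to prove that this linear map is an isomorphism. Using the (twisted) Euler sequences recalled above, I regard the source $\Fol_{d+2}$ as $\Proj\, H^0(\Proj,\Omega^{[1]}_{\Proj}(d+2))$, whose representatives are quasi-homogeneous $1$-forms $\omega=\sum_{i=0}^2 A_i\,dx_i$ with $A_i$ quasi-homogeneous of degree $(d+2)-l_i$ and $i_R\omega=0$; and I identify $\Foliation[2]{d}^{G}$ with the projectivization of the subspace $H^0(\Proj^2,\Omega^1_{\Proj^2}(d+2))^G$ of $G$-invariant homogeneous forms $\eta=\sum_{i=0}^2 C_i\,dx_i$ with $\deg C_i=d+1$ and $i_{R_0}\eta=0$, where $R_0=\sum_i x_i\frac{\partial}{\partial x_i}$ is the ordinary Euler field and the degree shift is Remark \ref{R:degreefol}. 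Since $\varphi^*$ is linear on forms, it suffices to show that $\varphi^*\colon H^0(\Proj,\Omega^{[1]}_{\Proj}(d+2))\to H^0(\Proj^2,\Omega^1_{\Proj^2}(d+2))^G$ is a linear isomorphism; projectivizing then gives the lemma.

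First I would check that $\varphi^*$ is well defined, i.e. that it lands in the $G$-invariant, Euler-annihilated forms. For $g=(g_0,g_1,g_2)\in G$ one has $\varphi\circ g=\varphi$, because $g_i^{l_i}=1$; hence $g^*(\varphi^*\omega)=(\varphi\circ g)^*\omega=\varphi^*\omega$, so $\varphi^*\omega$ is genuinely $G$-invariant. Moreover, writing $\varphi^*\omega=\sum_i l_i x_i^{l_i-1}(A_i\circ\varphi)\,dx_i$, a one-line contraction yields $i_{R_0}(\varphi^*\omega)=\sum_i l_i x_i^{l_i}(A_i\circ\varphi)=\big(\sum_i l_i x_i A_i\big)\circ\varphi=(i_R\omega)\circ\varphi$. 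Thus $i_R\omega=0$ forces $i_{R_0}(\varphi^*\omega)=0$, and $\varphi^*\omega$ indeed defines a foliation of degree $d$ on $\Proj^2$.

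The heart of the argument is a bijection on monomials. Since $\varphi$ acts by the substitution $x_i\mapsto x_i^{l_i}$, one computes $\varphi^*(x^b\,dx_i)=l_i\,x^a\,dx_i$, where $a_j=l_jb_j$ for $j\neq i$ and $a_i=l_i(b_i+1)-1$; the condition $\sum_j l_j b_j=(d+2)-l_i$ translates exactly into $|a|=d+1$. On the other hand, since $g^*\eta=\sum_i g_i\,C_i(g\cdot x)\,dx_i$, the form $\eta=\sum_i C_i\,dx_i$ is $G$-invariant if and only if every monomial $x^a$ occurring in $C_i$ satisfies $g_0^{a_0}g_1^{a_1}g_2^{a_2}g_i=1$ for all $g\in G$, that is $l_j\mid a_j$ for $j\neq i$ and $a_i\equiv-1\pmod{l_i}$. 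These are precisely the exponent vectors produced by $\varphi^*$, and the assignment $b\leftrightarrow a$ above is a bijection between the admissible monomials of $A_i$ and those of $C_i$. Injectivity of $\varphi^*$ then follows because $\varphi$ is dominant (so $A_i\circ\varphi=0$ implies $A_i=0$); for surjectivity one inverts the monomial dictionary to build $\omega$ with $\varphi^*\omega=\eta$, the correct quasi-homogeneous degrees being automatic from $|a|=d+1$, and the constraint $i_R\omega=0$ being recovered from $(i_R\omega)\circ\varphi=i_{R_0}\eta=0$ together with the dominance of $\varphi$.

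I expect the only delicate point to be the simultaneous bookkeeping of the three constraints imposed on both sides, namely $G$-invariance, the correct (quasi-)homogeneous degree, and annihilation by the respective Euler/radial fields, so that the monomial dictionary genuinely matches the Euler-annihilated subspaces and not merely the ambient spaces of coefficients. Once the congruences $l_j\mid a_j$ $(j\neq i)$ and $a_i\equiv-1\pmod{l_i}$ are identified with the image exponents of $\varphi^*$, and the transfer of the radial condition through the identity $i_{R_0}(\varphi^*\omega)=(i_R\omega)\circ\varphi$ is used in both directions, everything else is a routine verification.
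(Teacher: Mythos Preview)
Your argument is correct and complete. The paper itself does not provide a proof of this lemma: it only says that ``it is a direct computation to show that the following holds,'' and your proposal is precisely that computation carried out in full. Your identification of both sides with spaces of Euler-annihilated (quasi-)homogeneous $1$-forms, the verification that $\varphi^*$ lands in the $G$-invariants via $\varphi\circ g=\varphi$, the transfer of the radial condition through $i_{R_0}(\varphi^*\omega)=(i_R\omega)\circ\varphi$, and the monomial bijection encoding $G$-invariance by the congruences $l_j\mid a_j$ for $j\neq i$ and $a_i\equiv -1\pmod{l_i}$ are exactly the ingredients one would expect, and the check that $a_i\equiv -1\pmod{l_i}$ forces $a_i\ge l_i-1$ (so that the inverse dictionary produces genuine monomials) closes the only potential gap.
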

This means that a foliation on $\Proj$ is identified with a foliation on $\Proj^2$ invariant by $G$. Therefore, 
$\Foliation[2]{d}^{G}$ is an irreducible closed set of the space of foliations of degree $d$ on $\Proj^2$. 

Denoting by $S_n\subset \C[x_0,x_1,x_2]$ the space of 
quasi-homogeneous polynomials of degree $n$, we 
have the na\-tu\-ral application 
$\phi_n:\Proj(S_n)\times \Fol_{d-n} \rightarrow \Fol_d$,
$([F],[\omega]) \mapsto [F\omega],$
for $1\leq n\leq d$. 
The set of the saturated foliations in 
$\Fo$ is equal to the complement of $\displaystyle\cup_{1\leq n\leq d} Im (\phi_n)$, 
which is an open Zariski set in $\Fo$. 
 Note that this open set can be empty, as illustrated by the 
 following example.
 \begin{example} \label{E:particular case} 
 {\rm Take $\w=(1,1,\p)$, $\p\geq 2$. Looking at the coefficients of the forms 
 we can prove that $\Fo=Im (\phi_{d-2})$ for all $2< d \leq \p$.}
 \end{example}
 
 \subsection{Invariant algebraic curves}

Let $\F=[\omega]\in \Fo$ and $C\subset \Proj$ be an
irreducible algebraic curve of degree $n$, that is, defined by a  
quasi-homogeneous polynomial $F$ of degree $n$.  
We say that $C$ is $\F$-invariant if $i^*\omega\equiv0$, where 
$i$ denotes the inclusion of the intersection of
smooth loci of both $C$ and $\Proj$. 
Equivalently, $C$ is $\F$-invariant if 
there exists a quasi-homogeneous $2$-form $\Theta_{F}$ of
degree $n$ such that
\begin{equation}\label{E:invariant}
 \omega\wedge dF-F\Theta_{F}=0.
\end{equation}
\begin{remark}
An important fact about equation $(\ref{E:invariant})$ is that it still
works for reducible curves, {\it i.e.}, if the decomposition of $F$ is $F^{n_1}_1\ldots F^{n_r}_r$, then the equation
(\ref{E:invariant}) holds if and only if each irreducible factor $F_j$
defines an $\F$-invariant curve.
\end{remark}

Let  $p_0:=[1:0:0]$, $p_1:=[0:1:0]$, $p_2:=[0:0:1]$ and consider
the following sets
\begin{align*}
  \Co_{n}(d)&:=\{\F\in\Fo|\,
  \mbox{ there is a $\F$-invariant algebraic curve of degree $n$}\},\\
   \Dl_{n}(d)&:=\left\{\begin{array}{rl}
   (x,\F)\in\Proj\times \Fo|&\mbox{$x$ belongs to some $\F$-invariant algebraic }\\
  &\quad \mbox{ curve of degree $n$}
   \end{array}\right\},
\end{align*}
  and for each $i\in\{0,1,2\}$
\[  
  \Co^{p_i}_{n}(d):=\{\F\in\Fo|\quad
  \mbox{$p_i$ belongs to some $\F$-invariant algebraic curve of degree $n$}\}.
\]
The following lemma will be used in \S \ref{S:wirthoutleaves}. 
\begin{lemma}\label{L:closedset}
 The sets $\Co_{n}(d)$, $\Dl_{n}(d)$ and $\Co^{p_i}_{n}(d)$ are closed 
 for every $n$ and $i$.
\end{lemma}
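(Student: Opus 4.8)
The plan is to show each of the three families is closed by exhibiting it as the image of a projection from an auxiliary incidence variety, which is itself cut out by algebraic equations. The fundamental observation is that the invariance condition (\ref{E:invariant}) is polynomial in the coefficients of both $\omega$ and $F$: for fixed degree $n$, writing $F=\sum_\alpha c_\alpha x^\alpha$ ranging over the finitely many quasi-homogeneous monomials of weighted degree $n$, and writing $\omega=\sum A_i\,dx_i$ with the $A_i$ having undetermined coefficients, the wedge $\omega\wedge dF$ is a $2$-form whose coefficients are bilinear in the coefficients of $\omega$ and those of $F$. The existence of a quasi-homogeneous $2$-form $\Theta_F$ of degree $n$ with $\omega\wedge dF = F\,\Theta_F$ is equivalent to the vanishing of $\omega\wedge dF$ modulo $F$ in each coefficient, i.e. to the divisibility of each coefficient $2$-form of $\omega\wedge dF$ by $F$, and this divisibility is in turn a system of polynomial (indeed bilinear) equations in $(c_\alpha)$ and the coefficients of $\omega$.

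First I would set up the incidence variety. Let $N=\dim S_n - 1$ and consider the product $\Proj(S_n)\times \Fo$, where a point is a pair $([F],[\omega])$. Define
\[
\widetilde{\Co}_n(d):=\{([F],[\omega])\in \Proj(S_n)\times \Fo \mid F \text{ defines an } \F\text{-invariant curve}\}.
\]
By the discussion above, the defining condition (\ref{E:invariant}) is a collection of bihomogeneous polynomial equations in the two sets of homogeneous coordinates, so $\widetilde{\Co}_n(d)$ is a Zariski-closed subset of $\Proj(S_n)\times \Fo$. Since $\Proj(S_n)$ is a projective space, the second projection $\pi_2:\Proj(S_n)\times \Fo \to \Fo$ is a proper (closed) map; the image $\pi_2(\widetilde{\Co}_n(d))$ is therefore closed, and by definition this image is exactly $\Co_n(d)$. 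This proves the first assertion.

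Next I would handle $\Dl_n(d)$. Consider the triple incidence set inside $\Proj\times \Proj(S_n)\times \Fo$,
\[
\widetilde{\Dl}_n(d):=\{(x,[F],[\omega]) \mid F(x)=0 \text{ and } F \text{ defines an } \F\text{-invariant curve}\},
\]
cut out by the equation $F(x)=0$ together with the invariance equations; here $F(x)=0$ is well defined on $\Proj$ because $F$ is quasi-homogeneous and we work on the weighted projective plane. This is Zariski-closed in the triple product, and since $\Proj(S_n)$ is projective, projecting it away gives a closed subset of $\Proj\times \Fo$ whose set of points is precisely $\Dl_n(d)$. Finally, $\Co^{p_i}_n(d)$ is obtained from $\Dl_n(d)$ by fixing $x=p_i$: it is the preimage of $\{p_i\}\times \Fo$ under the first-coordinate inclusion, equivalently the slice $\Dl_n(d)\cap(\{p_i\}\times \Fo)$ pushed to $\Fo$, which is closed by the same properness argument.

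The main technical point, and the only step requiring care rather than routine bookkeeping, is the translation of the geometric condition ``there exists $\Theta_F$ of degree $n$ with $\omega\wedge dF = F\,\Theta_F$'' into explicit closed conditions. The subtlety is that $\Theta_F$ is existentially quantified, so one cannot simply read off equations; the clean way is to observe that $\omega\wedge dF$ is a quasi-homogeneous $2$-form of degree $n+d$, that it lies in the image of multiplication by $F$ if and only if it maps to zero in the quotient by the ideal generated by $F$ (in the appropriate graded piece), and that this quotient membership is a linear-algebra condition on the coefficients whose failure is detected by the nonvanishing of suitable minors — hence a closed condition. Equivalently, one uses that the map $\Theta\mapsto F\Theta$ is linear with image of locally constant dimension on a stratification, and that divisibility of a polynomial by $F$ is a Zariski-closed condition on its coefficients when $[F]$ is allowed to vary, the two together guaranteeing that $\widetilde{\Co}_n(d)$ is closed rather than merely constructible. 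Once this is in place, the three closedness statements follow uniformly from properness of projection from projective space.
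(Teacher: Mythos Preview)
Your argument is correct and follows the same incidence-variety-plus-properness strategy that underlies the paper's proof; the paper simply cites \cite[Proposition~1]{Jorge} for the closedness of $\Co_n(d)$ and $\Dl_n(d)$ and then observes, exactly as you do, that $\Co^{p_i}_n(d)=\pi_2\bigl(\pi_1^{-1}(p_i)\cap\Dl_n(d)\bigr)$. Your only slip is calling the divisibility-by-$F$ condition ``bilinear'' in the opening paragraph --- it is polynomial but not bilinear in the coefficients of $F$ and $\omega$ --- though you correctly supersede this in your final paragraph with the rank/minors argument, so the proof stands.
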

\begin{proof}  The same argument of \cite[Proposition 1]{Jorge} shows that 
$\Co_{n}(d)$ and $\Dl_{n}(d)$ are closed sets. In order to finish we just note 
that $\pi_2(\pi_1^{-1}(p_i)\cap \Dl_{n}(d))=\Co^{p_i}_{n}(d)$, where $\pi_1$ and $\pi_2$ 
are canonical projections of $\Proj\times \Fo$. 
\end{proof}


\subsection{Saturated foliations} 
Let $\F$ be a saturated foliation on $\Proj$ and $p\in \sing(\F)$.
\begin{dfn} The \textit{multiplicity (or Milnor number) of $\F$ at $p$} which is 
denoted by $m(\F,p)$,  is defined as: 
\begin{enumerate}
\item If $p\notin\, \sing(\Proj)$, and $\F$ is locally given by $\omega=A(x,y)\,dx+B(x,y)\,dy$, then  
$$m(\F,p):=dim_{\C}\frac{\Ol_p}{\langle A,B\rangle},$$
where $\Ol_p$ is the local algebra of $\Proj$ at $p$. 
\item If $p\,\in\,Sing(\Proj)$, 
then  
\[
m(\F,p):=\frac{m(\G,(0,0))}{r},
\]
in which $U\simeq  \C^2/\mu_{r}$ is a neighborhood of $p$   
and $\G$ is the lifting of $\F_{|_{U}}$ to $\C^2$.
\end{enumerate}
Therefore we can define:
\[
m(\F):=\displaystyle\sum_{p\in\,\Proj}m(\F,p).
\]
\end{dfn}
In the index theory of foliations on complex regular surfaces there are other indices such as tangency, 
vanishing, Camacho-Sad and Bam-Bott, for a good reference see \cite{Brun1}. This theory is also valid on weighted projective planes since they are singular surfaces with at worst quotient singularities, for this see \cite{Brun2} 
and \cite{CoRoSoa}. As an adaptation of \cite[Proposition 1, page 21]{Brun1} we have the following formula
\[
m(\F) = c_2 (\Proj) -\TF\cdot \NF,
\]
where $c_2$ is the 2- Chern class of $\Proj$.

\begin{prop}\label{P:Milnornumber}
Let $\F$ be a foliation of normal degree $d$ on $\Proj$. Then
\begin{equation}\label{E:formula}
l_0l_1l_2m(\F)=
l_0l_1+l_0l_2+l_1l_2+(d-|\w|)d.
\end{equation}
\end{prop}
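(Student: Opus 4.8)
The plan is to evaluate both sides of the index formula $m(\F) = c_2(\Proj) - \TF\cdot\NF$ quoted just before the statement, using intersection theory on the weighted projective plane $\Proj=\Proj(l_0,l_1,l_2)$. First I would record the two numerical inputs. For the intersection term, recall that $\NF=\Ol_{\Proj}(d)$ while $\TF$ is dual to $\KF=\Ol_{\Proj}(d-|\w|)$, so $\TF=\Ol_{\Proj}(-(d-|\w|))$ as an element of $\Pic(\Proj)\otimes\Q$. Hence $\TF\cdot\NF = -d(d-|\w|)\cdot\big(\Ol_{\Proj}(1)\cdot\Ol_{\Proj}(1)\big)$, and one needs the fundamental self-intersection number $\Ol_{\Proj}(1)^2 = \tfrac{1}{l_0l_1l_2}$ on $\Proj$. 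For the Euler-characteristic term I would use that $c_2(\Proj)$ equals the topological Euler characteristic of $\Proj$, which for the weighted projective plane is $3$ (it is a rational homology $\Proj^2$, obtained as $\Proj^2/G$ with three isolated fixed cyclic-quotient points, and the Euler characteristic is insensitive to such quotients). Substituting into the index formula then gives
\[
m(\F) = 3 + \frac{d(d-|\w|)}{l_0l_1l_2}.
\]

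To turn this into the claimed formula I would clear denominators by multiplying through by $l_0l_1l_2$, obtaining
\[
l_0l_1l_2\,m(\F) = 3l_0l_1l_2 + d(d-|\w|),
\]
and then I must reconcile the constant term $3l_0l_1l_2$ with the stated $l_0l_1+l_0l_2+l_1l_2$. This discrepancy signals that the naive values above are not quite right: the correct Euler number in the orbifold/stacky sense, or equivalently the correct $c_2(\Proj)$ entering Brunella's formula, is \emph{not} the plain topological $3$ but a corrected quantity accounting for the three quotient singularities at $p_0,p_1,p_2$, whose local orders are $l_0,l_1,l_2$. The honest computation replaces $3l_0l_1l_2$ by $l_1l_2+l_0l_2+l_0l_1$, reflecting that each singular point $p_i$ contributes a fractional correction $\tfrac{1}{l_i}$ rather than a full unit to the orbifold Euler characteristic. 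I would therefore compute $c_2(\Proj)$ carefully as the orbifold second Chern number, either via the toric description of $\Proj$ or by pulling back along $\varphi:\Proj^2\to\Proj$ from \eqref{E:map} and dividing by $|G|=l_0l_1l_2$, tracking the ramification over the three coordinate points.

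The main obstacle will be this bookkeeping of the singular contributions: getting the constant term exactly right requires the correct notion of $c_2(\Proj)$ in Brunella's index formula for $V$-manifolds, since $\Proj$ is singular and the classes $\NF,\TF$ live only in $\Pic(\Proj)\otimes\Q$. The cleanest route is probably the $G$-quotient picture: on $\Proj^2$ one has the honest equalities $c_2(\Proj^2)=3$ and $\Ol_{\Proj^2}(1)^2=1$, and the pullback foliation $\varphi^*\F$ of normal degree $d+2$ (cf.\ the isomorphism \eqref{E:isomorphism}) satisfies the classical $\Proj^2$ Milnor formula; pushing that relation down through the degree-$l_0l_1l_2$ map $\varphi$, with due care for the behaviour at the ramification points $p_i$, should yield \eqref{E:formula} directly and automatically produce the $l_il_j$ terms instead of $3l_0l_1l_2$. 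Once the correct Euler number is in hand, the remainder is the routine substitution and clearing of denominators sketched above.
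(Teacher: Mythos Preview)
Your approach is the same as the paper's: plug $c_2(\Proj)$, $\TF=\Ol_{\Proj}(|\w|-d)$, $\NF=\Ol_{\Proj}(d)$, and $\Ol_{\Proj}(1)^2=\tfrac{1}{l_0l_1l_2}$ into the index formula $m(\F)=c_2(\Proj)-\TF\cdot\NF$. The paper simply quotes $c_2(\Proj)=\tfrac{l_0l_1+l_0l_2+l_1l_2}{l_0l_1l_2}$ as a known fact for weighted projective planes and substitutes, arriving at \eqref{E:formula} in one line; your detour through the topological value $3$ and the subsequent self-correction is unnecessary, since in Brunella's $V$-manifold setting the relevant $c_2$ is from the outset the orbifold second Chern number, which for $\Proj(l_0,l_1,l_2)$ is exactly $\sum_i \tfrac{1}{l_i}$ when the weights are pairwise coprime.
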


\begin{proof}  
It follows from $c_2(\Proj)=\frac{l_0l_1+l_0l_2+l_1l_2}{l_0l_1l_2}$, $\TF=\Ol_{\Proj}(|\w|-d)$, 
and $\NF=\Ol_{\Proj}(d)$. 
\end{proof}
It is worth mentioning that another proofs can be found in \cite[Proposition 3.2]{CoRoSoa} and 
\cite[Proposition 1.7.6]{Lizarbe14}. 

Again for a fixed $i$, let $j<k$ be the elements of $\{0,1,2\}\backslash\{i\}$. 
We will work on an affine coordinate system $(x, y)\in \C^2$ where the action of $\mu_{l_i}$ on 
$\C^2$ defines $U_i=\C^2/\mu_{l_i}\subset \Proj$.

In this coordinate system, a foliation $\F$ of normal degree $d$ on $\Proj$ is defined by a 1-form 
$\omega$ that can be written as a sum of quasi-homogeneous 1-forms
\begin{equation}\label{E:decomposition}
 \omega = \omega_r + \omega_{r+l_i} + \omega_{r+2l_i} + \ldots+ \omega_{d-l_i}+\omega_{d}
\end{equation}
where, for $s = r, r+l_i, \ldots , d$, $\omega_s$ is a quasi-homogeneous 1-form of degree $s$ (both $x$, $dx$ and
$y$, $dy$ have degree $l_j$ and $l_k$ respectively) 
and the contraction of $\omega_d$ with the weighted 
radial vector field $v=l_jx \frac{\partial}{\partial x}+l_ky\frac{\partial}{\partial y}$
is equal to zero. If $\omega_d$ = 0 then $i_v \omega_{d-l_i}\neq 0$ and the plane at 
infinity is invariant by the foliation defined by $\omega$. 
Note that the equation (\ref{E:decomposition}) is equivalent to $\omega$ is invariant by the action of $\mu_{l_i}$.  
When $\omega_r\neq 0$, we call $r$ the {\it algebraic
multiplicity of $\F$ at $p_i$}.

\section{Foliations with algebraic leaves on $\Proj$}\label{algebraicleaves}

\subsection{Logarithmic and rational foliations}
The following result has been proved, in a much more general context, 
by Bogomolov and McQuillan \cite{Bogo}. We will give another proof of the fact that  
the low degree foliations on $\Proj$ have infinitely many algebraic leaves. 
\begin{prop}\label{P:Rational} Let $\F$ be a saturated foliation of normal degree $d$ on $\Proj$
induced by $\omega$.  
If $d<|\w|$, then $\F$ is a rational fibration.

Furthermore, 
\begin{enumerate}
\item\label{E:Rat1} If $d=l_0+l_1$, then $\omega$ is conjugated to $l_1x_1dx_0-l_0x_0dx_1$. 
\item\label{E:Rat2} If $d=l_0+l_2$, then $\omega$ is conjugated to $l_2x_2dx_0-l_0x_0dx_2$.  
\item\label{E:Rat3} If $l_0+l_2< d$ and $d\neq l_1+l_2$, then $\omega$ is conjugated to
\[
(d-l_0)(x_2x_0^{i}+x_1^{j+1})dx_0-l_0x_0d(x_2x_0^{i}+x_1^{j+1}),
\] 
with the condition
$(i+1)l_0+l_2=(j+1)l_1+l_0=d$, $i,j\geq 1$. 
 \item\label{E:Rat4} If $d=l_1+l_2$, then $\omega$ is conjugated to 
\begin{enumerate} 
\item\label{E:first} $l_2x_2dx_1-l_1x_1dx_2$, when $1=l_0=l_1<l_2$  or $1<l_0<l_1<l_2$, 
\item\label{E:second} $l_2x_2dx_1-l_1x_1dx_2$ or 
$(l_2+l_1-1)(x_2x_0^{l_1-1}+x_1^{j+1})dx_0-x_0d(x_2x_0^{l_1-1}+x_1^{j+1})$, with the condition
$jl_1+1=l_2$, $j\geq 1$, 
when $1=l_0<l_1<l_2$. 
\end{enumerate}
\end{enumerate} 
\end{prop}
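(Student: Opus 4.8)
### Proof Proposal for Proposition \ref{P:Rational}

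The plan is to exploit the identification of foliations on $\Proj$ with $G$-invariant foliations on $\Proj^2$ given by Lemma \ref{L:isomorphism}, together with the Milnor number formula of Proposition \ref{P:Milnornumber}, to force the foliation into a very rigid normal form. The first step is to analyze the degree condition $d < |\w|$. Since $\TF = \Ol_{\Proj}(|\w| - d)$ has positive degree when $d < |\w|$, the tangent sheaf is ``positive'' in the weighted sense, which morally says the foliation cannot be of general type and must be a fibration. Concretely, I would use the decomposition \eqref{E:decomposition} of $\omega$ in an affine chart $U_i$: the lowest-degree part $\omega_r$ has degree $r$ at least $l_j + l_k$ (the minimal degree of a nonzero quasi-homogeneous $1$-form), and since the top degree is $d$, the constraint $d < |\w| = l_0 + l_1 + l_2$ severely limits how many homogeneous pieces can appear. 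In fact, comparing $d < |\w|$ against the possible degrees of the coefficients $A_i$ (of degree $d - l_i$) shows that several $A_i$ must vanish identically, pinning down the shape of $\omega$.

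Next, for the refined normal forms in cases \eqref{E:Rat1}--\eqref{E:Rat4}, I would treat each degree value $d$ separately by a dimension count on the space of admissible quasi-homogeneous coefficients. For a given $d$, the coefficient $A_i$ is a quasi-homogeneous polynomial of degree $d - l_i$ in variables of weights $l_0, l_1, l_2$; I would enumerate the monomials $x_0^{a_0} x_1^{a_1} x_2^{a_2}$ with $l_0 a_0 + l_1 a_1 + l_2 a_2 = d - l_i$. The pairwise coprimality of the $l_i$ and the smallness of $d$ mean each such space is typically zero- or one-dimensional, so the general $1$-form is essentially unique up to scale. The radial constraint $i_R \omega = 0$ (equivalently $l_0 x_0 A_0 + l_1 x_1 A_1 + l_2 x_2 A_2 = 0$) then cuts down the coefficients further, and I expect to read off directly the stated forms, e.g. $l_1 x_1 dx_0 - l_0 x_0 dx_1$ when $d = l_0 + l_1$. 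The side conditions $(i+1)l_0 + l_2 = (j+1)l_1 + l_0 = d$ in case \eqref{E:Rat3} are exactly the statement that the relevant monomials $x_2 x_0^i$ and $x_1^{j+1}$ have the correct weighted degree, so these arise naturally from the monomial enumeration rather than being imposed ad hoc.

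To confirm that the resulting foliation is genuinely a rational fibration, I would verify that the normal form admits a rational first integral, exhibiting it explicitly: in cases \eqref{E:Rat1}, \eqref{E:Rat2}, \eqref{E:Rat4}\eqref{E:first} the form is $\ell_k x_k dx_j - \ell_j x_j dx_k$, whose integral is the quasi-homogeneous ratio $x_j^{l_k}/x_k^{l_j}$, giving a pencil of curves; in case \eqref{E:Rat3} the structure $\alpha \cdot g \, dx_0 - l_0 x_0 \, dg$ with $g = x_2 x_0^i + x_1^{j+1}$ integrates to a ratio of the form $g / x_0^{\text{power}}$, again a rational fibration. The main obstacle will be the careful bookkeeping in case \eqref{E:Rat4}, where the distinction between the subcases $1 = l_0 = l_1 < l_2$, $1 < l_0 < l_1 < l_2$, and $1 = l_0 < l_1 < l_2$ changes which monomials are available: when $l_0 = 1$ the variable $x_0$ has weight one and can combine freely with others, producing the extra normal form $(l_2 + l_1 - 1)(x_2 x_0^{l_1-1} + x_1^{j+1}) dx_0 - x_0 \, d(x_2 x_0^{l_1-1} + x_1^{j+1})$, whereas for $l_0 > 1$ coprimality blocks this second possibility and only the pencil $l_2 x_2 dx_1 - l_1 x_1 dx_2$ survives. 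Disentangling these arithmetic cases—showing precisely when each additional monomial appears and verifying the conjugation identifications—is where the real work lies; the rest reduces to the monomial counting and the explicit first integrals described above.
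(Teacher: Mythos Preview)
Your overall shape is right---enumerate the possible quasi-homogeneous monomials in each $A_i$, impose the radial constraint, and then classify---but the central claim that ``each such space is typically zero- or one-dimensional'' is false, and this is where your plan breaks down. In case~\eqref{E:Rat3}, for instance, once you write the general form annihilated by $R$ you find (as the paper does) that
\[
\omega = x_0^{i}(l_2x_2\,dx_0 - l_0x_0\,dx_2) + \bigl(x_0^{i}B(x_0,x_1) + b\,x_1^{j}\bigr)(l_1x_1\,dx_0 - l_0x_0\,dx_1),
\]
where $B$ is an \emph{arbitrary} quasi-homogeneous polynomial in $x_0,x_1$ of degree $l_2-l_1$. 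When $l_0=1$ this space can have dimension $\lfloor (l_2-l_1)/l_1\rfloor + 1$, so monomial counting alone leaves a multi-parameter family, not a unique form. The same phenomenon occurs in case~\eqref{E:Rat4}, where extra coefficients of degree $l_2-l_0-l_1$ or $l_2-l_1-1$ appear.

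The missing idea is that these extra parameters are killed by the automorphism group of $\Proj$, specifically by substitutions $\phi(x_0,x_1,x_2)=(x_0,x_1,x_2+H(x_0,x_1))$ with $H$ chosen to solve a first-order linear ODE in one of the variables (e.g.\ $H_{x_1}=B$ in case~\eqref{E:Rat3}). You mention ``conjugation identifications'' only as a bookkeeping afterthought, but it is the substance of the argument. Relatedly, in case~\eqref{E:Rat4} the paper does not distinguish the subcases by monomial availability; it uses Proposition~\ref{P:Milnornumber} to compute $m(\F)=1/l_0$, which forces $\sing(\F)=\{p_0\}$ when $l_0>1$ and allows $\sing(\F)=\{p_0\}$ or $\{p_2\}$ (up to automorphism) when $l_0=1$. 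The location of the singularity then dictates which coefficients can be nonzero, and the two normal forms in~\eqref{E:second} correspond precisely to these two singular loci. Your proposal to detect the split via ``which monomials are available when $l_0=1$'' does not by itself separate the two outcomes, since both normal forms live in the same coefficient space. Finally, the appeal to Lemma~\ref{L:isomorphism} is a red herring here: the paper works entirely with quasi-homogeneous forms on $\Proj$ and never passes to $G$-invariant foliations on $\Proj^2$ for this proposition.
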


\begin{proof} 
First notice that $d\geq l_0+l_1$ and there are no saturated foliations on $\Proj$ 
if $l_0+l_1< d<l_0+l_2$. We now proceed to prove Items (\ref{E:Rat1}) - (\ref{E:Rat4}).

The assumption of Item (\ref{E:Rat1}) implies that $\omega=l_1x_1dx_0-l_0x_0dx_1$. 
Items (\ref{E:Rat1}) follows.
 
The assumption of Item (\ref{E:Rat2}) implies that $\omega=l_2x_2dx_0-l_0x_0dx_2$, 
if $1\leq l_0<l_1<l_2$, or 
$\omega=l_2x_2d(ax_0+bx_1)-(ax_0+bx_1)dx_2$, for some $a,b\in\C$, 
if $1=l_0=l_1<l_2$. Now Item (\ref{E:Rat2}) follows by performing a suitable 
change of coordinates. 

The assumption of Item (\ref{E:Rat3}) implies that 
$1\leq l_0<l_1<l_2$ and 
\begin{align*}
\omega=&x_0^{i}(l_2x_2dx_0- l_0x_0dx_2)+(x_0^i B(x_0,x_1)+bx_1^j)(l_1x_1dx_0-l_0x_0dx_1),\\
=&(l_2x_0^ix_2+l_1bx_1^{j+1}+l_1x_1x_0^{i}B)dx_0-l_0x_0(x_0^idx_2+bx_1^jdx_1+x_0^{i}Bdx_1),
\end{align*}
where $(i+1)l_0+l_2=(j+1)l_1+l_0=d$, $i,j\geq1$, $b\in\C^*$ and $B$ is of degree $d-(i+1)l_0-l_1$. 
To conjugate $\omega$ to
$(l_2x_0^ix_2+l_1bx_1^{j+1})dx_0-l_0x_0(x_0^idx_2+bx_1^jdx_1)$ we perform the change of coordinates
$\phi(x_0,x_1,x_2)=(x_0,x_1,x_2+H(x_0,x_1))$, where $H$ is chosen from the equation $H_{x_1}=B$.
Also this last 1-form can be written as 
\[
(d-l_0)(x_0^ix_2+\frac{b}{j+1}x_1^{j+1})dx_0-l_0x_0d(x_0^ix_2+\frac{b}{j+1}x_1^{j+1}).
\]
Up to automorphism of $\Proj$ we can suppose that $b=j+1$. Item (\ref{E:Rat3}) follows. 

The assumption of Item (\ref{E:Rat4}) together with Item (\ref{E:Rat2}) lets us assume that $1\leq l_0<l_1<l_2$. 
From Proposition \ref{P:Milnornumber} we see that $m(\F)=\frac{1}{l_0}$, so we must consider two cases: 
$l_0>1$ and $l_0=1$. In the first case the condition $l_0>1$ implies that $\sing(\F) = \{p_0\}$ and that 
\[
\omega=(l_2x_2dx_1-l_1x_1dx_2)+x_1C(x_0,x_1)(l_1x_1dx_0-l_0x_0dx_1),
\] 
where $C$ is of degree $l_2-(l_1+l_0)$. 
We now conjugate $\omega$ to
$l_2x_2dx_1-l_1x_1dx_2$ by making the change of coordinates 
$\phi(x_0,x_1,x_2)=(x_0,x_1,x_2+H(x_0,x_1))$, where $H$ is chosen from the equation $H_{x_0}=-x_1C$.
In the second case, $l_0=1$, up to an automorphism of $\Proj$ we can suppose that either 
$\sing(\F) = \{p_0\}$ or $\sing(\F) = \{p_2\}$. When $\sing(\F) = \{p_0\}$ the conclusion follows from the same argument we employed in the previous case $l_0> 1$. When $\sing(\F) = \{p_2\}$ we have
\begin{align*}
\omega=&x_0^{l_1-1}(l_2x_2dx_0-x_0dx_2)+(x_0^{l_1}A(x_0,x_1)+bx_1^{j})(l_1x_1dx_0-x_0dx_1),\\
=&(l_2x_0^{l_1-1}x_2+l_1bx_1^{j+1}+l_1x_1x_0^{l_1}A)dx_0-x_0(x_0^{l_1-1}dx_2+bx_1^jdx_1+x_0^{l_1}Adx_1),
\end{align*}
where $jl_1+1=l_2$, $j\geq 1$, $b\in\C^*$ and $A$ of degree $l_2-(l_1+1)$. 
Now we proceed as in Item (\ref{E:Rat3}) to obtain Item (\ref{E:second}). This completes the proof of Item (\ref{E:Rat4}), and therefore the proof of the proposition. 
\end{proof}

For the case of foliations of normal degree $|\w| $ we have the following result.

\begin{prop}\label{P:Logarithmic}   
 If $d=|\w|$, then the general element $\Fol_{d}$ is defined by a logarithmic $1$-form
with poles on three curves of degree $l_0$, $l_1$ and $l_2$. 

Furthermore, for $1\leq l_0<l_1<l_2$, a saturated foliation $\F\in \Fol_{|\w|}$ defined by $\omega$ 
is conjugated to one of the following 1-forms  
\begin{enumerate}
\item\label{E:Log1}
$x_0x_1x_2\left(a\frac{dx_0}{x_0}+b\frac{dx_1}{x_1}+c\frac{dx_2}{x_2}\right)$, 
where $a,b,c\in\C$ are such that $al_0+bl_1+cl_2=0$,  
\item \label{E:Log2}
$x_0^{i+1}x_1^{j+1}\left(l_1\frac{dx_0}{x_0}-l_0\frac{dx_1}{x_1}+d\left(\frac{x_2}{x_0^ix_1^j}\right)\right)$, 
where $i\geq 1$ and $j\geq 0$ are such that $il_0+jl_1=l_2$, 
\item \label{E:Log3}
$x_0^{l_1+1}x_2\left(l_2\frac{dx_0}{x_0}-\frac{dx_1}{x_1}+d\left(\frac{x_1}{x_0^{l_1}}\right)\right)$, 
when $1=l_0<l_1<l_2$.
\end{enumerate} 
\end{prop}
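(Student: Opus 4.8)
The plan is to prove Proposition \ref{P:Logarithmic} by analyzing the structure of $\omega$ in the affine chart at $p_0$ and exploiting the constraints imposed by the Milnor number formula of Proposition \ref{P:Milnornumber}. When $d=|\w|$, Proposition \ref{P:Milnornumber} gives $l_0l_1l_2\,m(\F)=l_0l_1+l_0l_2+l_1l_2$, which forces the total multiplicity to be small; this severely restricts how the singularities of a saturated $\F$ can be distributed among $p_0,p_1,p_2$ and the rest of $\Proj$. First I would establish the general (generic) assertion: for $d=|\w|$ the one-form $\omega$ annihilated by $R$ of tridegree $(d-l_0,d-l_1,d-l_2)=(l_1+l_2,l_0+l_2,l_0+l_1)$ can be written, for generic coefficients, as
\[
\omega = x_0x_1x_2\left(a\frac{dx_0}{x_0}+b\frac{dx_1}{x_1}+c\frac{dx_2}{x_2}\right),
\]
with $al_0+bl_1+cl_2=0$ being exactly the condition $i_R\omega=0$. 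This is the logarithmic form with poles on $\{x_0=0\}$, $\{x_1=0\}$, $\{x_2=0\}$, of degrees $l_0,l_1,l_2$, so the generic statement follows by a dimension count comparing the space of such logarithmic forms with the full space $H^0(\Proj,\Omega^{[1]}_{\Proj}(|\w|))$.

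The bulk of the work is the classification up to conjugacy under $\Aut(\Proj)$ in the case $1\leq l_0<l_1<l_2$. I would proceed exactly as in the proof of Proposition \ref{P:Rational}: write $\omega$ in the affine chart $U_0$ (so $x_0\neq0$) as a sum of quasi-homogeneous pieces as in \eqref{E:decomposition}, and use the Milnor number constraint together with saturatedness to pin down $\sing(\F)$. The small value of $m(\F)$ means the singular locus is concentrated at the coordinate points, and the existence of invariant coordinate lines is detected by whether the lowest-degree homogeneous part of $\omega$ vanishes along $x_0=0$, $x_1=0$, or $x_2=0$. The three cases \eqref{E:Log1}, \eqref{E:Log2}, \eqref{E:Log3} should correspond to whether all three coordinate axes are invariant (generic logarithmic case), or whether a monomial relation $il_0+jl_1=l_2$ (resp.\ $l_1$ in place of $l_2$ under $1=l_0$) allows a ``mixed'' invariant curve such as $x_2-\lambda x_0^ix_1^j$, forcing $\omega$ into the form with a closed logarithmic summand plus an exact piece $d(x_2/x_0^ix_1^j)$.

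The mechanism to move between the normal forms is the same triangular change of coordinates used in Proposition \ref{P:Rational}, namely substitutions of the type $\phi(x_0,x_1,x_2)=(x_0,x_1,x_2+H(x_0,x_1))$ where $H$ is chosen to kill the unwanted quasi-homogeneous terms by solving a first-order equation $H_{x_1}=B$ or $H_{x_0}=-x_1C$. The arithmetic condition on $(i,j)$ in \eqref{E:Log2} and the special case $l_0=1$ in \eqref{E:Log3} come from requiring that the eliminating polynomial $H$ be genuinely quasi-homogeneous of the correct degree; this is where the divisibility/monomial relations $il_0+jl_1=l_2$ and $jl_1+1=l_2$ enter, precisely analogous to the relations $(i+1)l_0+l_2=(j+1)l_1+l_0=d$ appearing in Item \eqref{E:Rat3}.

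The main obstacle I expect is the bookkeeping of which monomials can appear in each quasi-homogeneous stratum and the verification that, after the normalizing change of coordinates, no residual terms survive that would produce a genuinely different foliation. In particular, one must check that the candidate invariant curves are forced (not merely allowed) by the equation $\omega\wedge dF - F\Theta_F=0$ and that the residues $a,b,c$ in \eqref{E:Log1} are constrained only by $al_0+bl_1+cl_2=0$; ruling out foliations with fewer than three invariant coordinate lines, or with an unexpected additional invariant curve, is the delicate point and is exactly where the Milnor number count $m(\F)=\tfrac{l_0l_1+l_0l_2+l_1l_2}{l_0l_1l_2}$ must be used to close off the remaining possibilities.
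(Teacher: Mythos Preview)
Your overall strategy matches the paper's: use Proposition~\ref{P:Milnornumber} to get $m(\F)=\frac{1}{l_0}+\frac{1}{l_1}+\frac{1}{l_2}$, deduce that (up to $\Aut(\Proj)$) the singular locus is either $\{p_0,p_1,p_2\}$ or, when $l_0=1$, possibly $\{p_1,p_2\}$, write $\omega$ explicitly in each case, and normalize via a triangular substitution $\phi(x_0,x_1,x_2)=(x_0,x_1,x_2+H(x_0,x_1))$.

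The genuine gap is in the normalization step. You propose to determine $H$ by an equation of the shape $H_{x_1}=B$ or $H_{x_0}=-x_1C$, borrowed verbatim from Proposition~\ref{P:Rational}. Those equations are always polynomially solvable and therefore cannot by themselves produce the trichotomy. In the present proposition, when $\sing(\F)=\{p_0,p_1,p_2\}$ one has
\[
\omega=-(ax_1x_2+l_1x_0x_1^2A)\,dx_0+(bx_0x_2-l_0x_0^2x_1A)\,dx_1+cx_0x_1\,dx_2,
\]
with $al_0+bl_1+cl_2=0$, and the equation that $H$ must satisfy to reduce $\omega$ to the form of Item~(\ref{E:Log1}) is
\[
aH+cx_0H_{x_0}=l_1x_0x_1A.
\]
This linear ODE is \emph{resonant}: solving it monomial by monomial requires $a+ic\neq0$ for every pair $(i,j)$ with $il_0+jl_1=l_2$. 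The obstruction $a+ic=0$ (equivalently $b+jc=0$) is precisely what singles out Item~(\ref{E:Log2}); it is an algebraic resonance among the residues $a,b,c$, not merely the existence of a monomial relation permitting a curve $x_2=\lambda x_0^ix_1^j$ as you suggest. Likewise, when $l_0=1$ and $\sing(\F)=\{p_1,p_2\}$, the relevant normalizing equation is $H-x_0^{l_1}H_{x_1}=-A$, again not a bare antiderivative. Without identifying these correct ODEs and analyzing their solvability you will not arrive at the case split (\ref{E:Log1})--(\ref{E:Log3}).

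A minor point: the paper does not prove the generic assertion by a dimension count but by the same constructive route---a general $\F$ has $\sing(\F)=\{p_0,p_1,p_2\}$ and the resonance above generically does not occur, so $\omega$ is conjugate to the form of Item~(\ref{E:Log1}).
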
 

\begin{proof} 
We will first prove that a general element $\F\in \Fol_{|\w|}$ is defined by 1-form conjugated to 
\begin{equation}\label{E:logarithmic}
x_0x_1x_2\left(a\frac{dx_0}{x_0}+b\frac{dx_1}{x_1}+c\frac{dx_2}{x_2}\right),
\end{equation}
with the condition $al_0+bl_1+cl_2=0$.   

In fact, by Proposition \ref{P:Milnornumber} we have $m(\F)=\frac{1}{l_0}+\frac{1}{l_1}+\frac{1}{l_2}$. 
Since $\F$ is a general element, up to an automorphism of $\Proj$ we can assume that 
$\sing(\F)=\{p_0,p_1,p_2\}$ and therefore $\F$ is induced by a 1-form $\eta$ equal to 
\begin{equation}\label{E:solloga}
-(ax_1x_2+l_1x_0x_1^2A(x_0,x_1))dx_0+
(bx_0x_2-l_0x_0^2x_1A(x_0,x_1))dx_1+cx_0x_1dx_2, 
\end{equation}
where $al_0+bl_1+cl_2=0$ and $A$ has degree $l_2-l_0-l_1$.  

Now we wish to conjugate $\eta$ to the 1-form (\ref{E:logarithmic}).  To do this we take a change 
of coordinates $\phi(x_0,x_1,x_2)=(x_0,x_1,x_2+F(x_0,x_1))$, where $F$ is chosen from the equation 
$aF+cx_0F_{x_0}=l_1x_0x_1A$. Since that $\F$ is general we can guarantee the existence of a solution 
$F(x_0,x_1)$, which is of degree $l_2$. 

Now let $\F\in \Fol_{|\w|}$  be a saturated foliation defined by $\omega$ and assume that $1\leq l_0<l_1<l_2$. 
Since $m (\F)=\frac{1}{l_0}+\frac{1}{l_1}+\frac{1}{l_2}$, two cases are to be considered: either $1<l_ 0$, in which  
case $\sing(\F)=\{p_0,p_1,p_2\}$, or if $l_ 0 = 1$, in which case, up to an automorphism of $\Proj$, one has  
$\sing(\F)=\{p_0,p_1,p_2\}$ or $\sing(\F)=\{p_1,p_2\}$. 

If $\sing(\F)=\{p_0,p_1,p_2\}$, then $\omega$ is as in (\ref{E:solloga}). 
Then we can apply the previous argument by noting that the existence of $F$ is equivalent to the fact that for each $i, j\geq 1$ with the condition $il_0 + jl_1 = l_2$ we have $a + ic\neq 0$. Item (\ref{E:Log1}) follows. In the opposite case there are $i, j\geq 1$ such that $il_0 + jl_1 = l_2$ and $a + ic = 0$, and therefore $b + cj = 0$. Then we can write $x_0x_1A$ in a unique way as $fx_0^ix_1^j+B$, where $f\in\C^*$ and we can apply the above argument to $B$ instead of $x_0x_1A$. Thus $\omega$ is conjugated to   
\[
-(-ix_1x_2+l_1fx_0^ix_1^{j+1})dx_0+
(-jx_0x_2-l_0fx_0^{i+1}x_1^{j})dx_1+x_0x_1dx_2. 
\]
Up to automorphism of $\Proj$ we can suppose that $f=1$, 
which corresponds to Item (\ref{E:Log2}).

When $l_0=1$ and $\sing(\F)=\{p_1,p_2\}$, up to an 
automorphism of $\Proj$, the 1-form $\omega$ is equal either to: 
\begin{equation}\label{E:firstpossi}
(-l_2x_1x_2+ux_0^{l_2}x_1+x_1A(x_0,x_1))dx_0+
(-ux_0^{l_2+1}-x_0x_1A(x_0,x_1))dx_1+x_0x_1dx_2, 
\end{equation}
 or to 
\begin{equation}\label{E:second possi}
(-l_1x_1x_2+ul_2x_0^{l_1}x_2+l_1x_1A(x_0,x_1))dx_0+
(x_0x_2-x_0A(x_0,x_1))dx_1-ux_0^{l_2+1}dx_2,   
\end{equation}  
where $u\in\C^*$ and $A$ is of degree $l_2$. 
If $\omega$ is equal to the 1-form in (\ref{E:firstpossi}) we apply the same previous argument of Item (\ref{E:Log2}), since $i = l_2$ and $j = 0$. If $\omega$ is equal to the 1-form in (\ref{E:second possi}), by means of a suitable change of coordinates we can suppose that $u=1$. Then we conjugate $\omega$ to the form 
$(-l_1x_1x_2+l_2x_0^{l_1}x_2)dx_0+x_0x_2dx_1-x_0^{l_2+1}dx_2$, 
which is in fact is equivalent to finding a change of coordinates $\phi(x_0,x_1,x_2)=(x_0,x_1,x_2+H(x_0,x_1))$, 
where $H$ is a solution of the equation $H-x_0^{l_1}H_{x_1} = -A$. This 1-form can be written as in Item (\ref{E:Log3}), thus completing the proof of the proposition.  
\end{proof}

The case $1=l_0=l_1<l_2$ of the proposition above will be dealt with in the next subsection, 
see Proposition \ref{P:Weighted special}.  

\subsection{Existence of algebraic leaves on $\Proj^2_k$}\label{S:Hirzebruch}
Recall that $\Proj^2_k=\Proj(1,1,k)$, $k\geq 2$, and 
its only singularity is $p_2=[0:0:1]\in U_2\subset \Proj^2_k$. The 
resolution of this singularity can be obtained from the map 
$\pi:\hat{U_2}\rightarrow U_2$, where 
$\hat{U_2}=\{((x,y)_{k},[s:t])\in U_2\times \Proj^1\,\,|\,xt=ys\}$ is a non-singular surface 
covered by two open sets $\hat{U_2}=V_0\cup V_1$ such that $V_i\simeq \C^2$ and 

$$\begin{array}{rclcrcl}
\pi_{|_{V_0}}:V_0&\rightarrow& U_2&\mbox{ and }&\pi_{|_{V_1}}:V_1&\rightarrow& U_2\\
(x,y)&\mapsto&(x^{1/k},x^{1/k}y)_k&\quad&
(x,y)&\mapsto&(xy^{1/k},y^{1/k})_k.
\end{array}$$
In the same way that $U_2$ is compactified as $\Proj^2_k$ we can compactify $\hat{U_2}$ 
as $\mb F _k$ and extend $\pi$ to $\pi:\mb F _k\rightarrow \Proj^2_k$ with the exceptional divisor 
$E=\pi^{-1}(\{p_2\})\simeq \Proj^1$ satisfying $E^2=-k$. See \cite[Section 2]{Artal} or 
\cite[Section 2.6]{Reid} for more details. 

\begin{prop}\label{P:Riccati} Let $\F$ be a saturated foliation of normal degree $d$ on
$\Proj^2_k$, $F$ be the strict transform of a line passing through $p_2$,  
$r$ be the algebraic multiplicity of $\F$ at $p_2$ and
$\G=\pi^*(\F)$ be the pull-back of the foliation $\F$. 
Then the following equality holds: 
\[
\NG=dF+\left(\frac{d-e}{k}\right)E,
\]
where $$e=\left\{\begin{array}{ll}
                 r-k&\quad \mbox{, if $E$ is $\G$-invariant,}\\
                 r&\quad \mbox{, if $E$ is not $\G$-invariant.}
                \end{array}\right.$$
Moreover $\G$ is a Riccati foliation with respect to the
natural rational fibration if and only if $e=d-2k$.
\end{prop}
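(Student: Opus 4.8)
The plan is to work entirely inside $\Pic(\mb F_{k})\otimes\Q=\Q F\oplus\Q E$, using $F^2=0$, $F\cdot E=1$, $E^2=-k$. First I would record two purely geometric pullback identities. If $L$ is a line through $p_2$, then its strict transform is $F$, and imposing $\pi^*\Ol_{\Proj}(1)\cdot E=0$ (since $E$ is contracted) forces
\[
\pi^*\Ol_{\Proj}(1)=F+\tfrac1k E,
\qquad\text{hence}\qquad
\pi^*\NF=\pi^*\Ol_{\Proj}(d)=dF+\tfrac dk E .
\]
Comparing $K_{\mb F_{k}}=-2E-(k+2)F$ with $\pi^*K_{\Proj}=-(k+2)\big(F+\tfrac1kE\big)$ then yields the discrepancy $K_{\mb F_{k}}=\pi^*K_{\Proj}+\tfrac{2-k}{k}E$.

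Next I would reduce the statement to a single local invariant. Since $\mb F_{k}$ is smooth and $\Proj$ has only quotient singularities, the relations $\NF=\KF-K_{\Proj}$ and $\NG=\KG-K_{\mb F_{k}}$ hold in $\Pic\otimes\Q$. Writing the foliated discrepancy as $\KG=\pi^*\KF+b\,E$ and combining with the two identities above gives
\[
\NG=\pi^*\NF+\Big(b-\tfrac{2-k}{k}\Big)E
=dF+\Big(\tfrac dk+b-\tfrac{2-k}{k}\Big)E .
\]
In particular the coefficient of $F$ is automatically $d$, and matching the coefficient of $E$ with $\tfrac{d-e}{k}$ is equivalent to proving $b=\tfrac{2-k-e}{k}$. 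Equivalently, the difference $\NG-\pi^*\NF$ is $-\rho\,E$, where $\rho$ is the order by which the pulled-back form must be divided along $E$ to become saturated, and the goal becomes the identification $e=k\rho$.

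The heart of the argument is therefore the local analysis at $p_2$. In the orbifold chart $U_2=\C^2/\mu_k$ the foliation lifts to $\hat\omega=\hat\omega_r+\hat\omega_{r+k}+\cdots$ with $\hat\omega_r\neq0$ the leading jet, $r$ the algebraic multiplicity. Using the resolution chart $V_0$ of the paper, with $X=a^{1/k}$, $Y=a^{1/k}b$ and $E=\{a=0\}$, a direct substitution gives for a homogeneous piece of $1$-form degree $s$
\[
\pi^*\hat\omega_s=a^{s/k-1}\Big[\tfrac1k\big(A(1,b)+bB(1,b)\big)\,da+a\,B(1,b)\,db\Big],
\]
where one recognizes $A(1,b)+bB(1,b)=a^{-s/k}\,i_v\hat\omega_s$ for the radial field $v=x\px+y\py$. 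The dichotomy is then governed by $i_v\hat\omega_r$: if $i_v\hat\omega_r\not\equiv0$, the $da$-coefficient survives at order $a^{r/k-1}$, the saturated form is divisible by $a$ in its $db$-part, so $E$ is $\G$-invariant and $\rho=\tfrac rk-1$, giving $e=r-k$; if $i_v\hat\omega_r\equiv0$ (radial leading jet), the $da$-term drops to order $a^{r/k}$, $E$ fails to be invariant, and $\rho=\tfrac rk$, giving $e=r$. The chart $V_1$ is treated symmetrically and gives the same order along $E$. I expect this step to be the main obstacle: one must correctly bookkeep the fractional exponents arising from the semi-invariant lift together with the $\Ol(d)$-twist, and from them read off simultaneously the case split ``$E$ invariant'' versus ``$E$ dicritical'' and the exact saturation order.

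Finally, the ``moreover'' is immediate from the established formula. Since $K_{\mb F_{k}}\cdot F=-2$ and $F^2=0$, for a generic fiber
\[
\Tang(\G,F)=\KG\cdot F+F^2=\NG\cdot F-2=\tfrac{d-e}{k}-2 .
\]
By definition $\G$ is Riccati with respect to the natural ruling exactly when its generic fiber is non-invariant and transverse, i.e. $\Tang(\G,F)=0$; this holds if and only if $\tfrac{d-e}{k}=2$, that is, $e=d-2k$.
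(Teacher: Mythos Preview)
Your proposal is correct and is substantially more detailed than the paper's own proof, which for the first assertion simply invokes \cite[Proposition~7.3]{Artal} together with the computation of $\pi^*\NF$, and for the ``moreover'' just observes that $\G$ is Riccati iff $\NG\cdot F=2$, i.e.\ $\tfrac{d-e}{k}=2$. Your route is genuinely different in that it is self-contained: you rebuild the formula from the foliated adjunction $\NG=\KG-K_{\mb F_k}$, the discrepancy $K_{\mb F_k}=\pi^*K_{\Proj}+\tfrac{2-k}{k}E$, and a direct local blow-up analysis, rather than importing the result from the literature. The Riccati part is essentially identical to the paper's, just phrased through $\Tang(\G,F)=\KG\cdot F+F^2$ instead of $\NG\cdot F$.

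The only point that deserves care is precisely the one you flag. Your ``saturation order'' $\rho$ is read off from fractional powers $a^{r/k-1}$, but $\hat\omega$ is only $\mu_k$-semi-invariant (with character $g\mapsto g^d$) and $\pi^*\NF$ is only a $\Q$-bundle, so the pulled-back form is not literally a holomorphic form on $V_0$. The clean way to make your computation rigorous is to pass to the $\mu_k$-cover $\tilde V_0\to V_0$, $a=c^k$, so that $\tilde V_0\to\C^2$ becomes the ordinary blow-up chart $(c,b)\mapsto(c,cb)$; there the vanishing order of $\sigma^*\hat\omega$ along $\tilde E=\{c=0\}$ is the honest integer $r-1$ (nondicritical) or $r$ (dicritical), while $p^*\theta$ for the saturated $\theta$ on $V_0$ acquires an extra factor $c^{k-1}$ exactly when $E$ is invariant. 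Combining these two contributions on $\tilde V_0$ and using $p^*E=k\tilde E$ yields $\rho=(r-k)/k$ or $\rho=r/k$, matching what your formal fractional-exponent bookkeeping produces. With this interpretation your argument goes through as written.
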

\begin{proof} The first part follows from \cite[Proposition 7.3]{Artal}  and the computation of the total transform $\pi^*(\NF)$. 
The foliation $\G$ is transversal with respect
to the natural fibration if and only if $\NG\cdot F=2$, which is 
equivalent to $e=d-2k$.  
\end{proof}

\begin{cor}\label{C:Riccati} Under the same hypotheses of Proposition \ref{P:Riccati},   
$\G$ is a Riccati foliation with respect to the rational fibration and $E$ is $\G$-invariant  
 if and only if $r=d-k$.
 In particular, $\F$ has an invariant line.  
\end{cor}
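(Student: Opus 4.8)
The plan is to deduce the Corollary almost entirely from Proposition \ref{P:Riccati}, the only genuinely new ingredient being the implication ``$r=d-k \Rightarrow E$ is $\G$-invariant''. First I would record the logical reduction. By Proposition \ref{P:Riccati}, $\G$ is Riccati if and only if $e=d-2k$, and when $E$ is $\G$-invariant one has $e=r-k$. Hence, \emph{assuming} $E$ is invariant, $\G$ is Riccati exactly when $r-k=d-2k$, that is, when $r=d-k$. Consequently the biconditional ``$\G$ Riccati and $E$ invariant $\iff r=d-k$'' collapses to the single implication: if $r=d-k$ then $E$ is $\G$-invariant. Indeed, the forward direction is then immediate ($E$ invariant and $\G$ Riccati give $e=r-k$ and $e=d-2k$, so $r=d-k$), and the reverse direction needs only that $r=d-k$ forces invariance of $E$, after which $e=r-k=d-2k$ and Proposition \ref{P:Riccati} yields the Riccati property.

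To treat invariance of $E$, I would pass to the smooth cover. In the chart $U_2=\C^2/\mu_k$ the foliation lifts to the $\mu_k$-invariant foliation on $\C^2_{(x,y)}$ defined by $\omega=\omega_r+\omega_{r+k}+\cdots+\omega_d$, and $\mb F_k$ is obtained from the blow-up of $\C^2$ at the origin by passing to the $\mu_k$-quotient, with $E$ the image of the exceptional divisor. Since $\mu_k$ acts diagonally, it acts trivially on the space of directions, so $E$ is $\G$-invariant precisely when the exceptional divisor of the blow-up at the origin is invariant for the blown-up lift of $\omega$. By the standard criterion for the exceptional divisor to be non-dicritical, that divisor is non-invariant exactly when the lowest-degree part $\omega_r$ lies in the kernel of the radial contraction, i.e. $i_v\omega_r=0$ with $v=x\partial_x+y\partial_y$; equivalently, $E$ is $\G$-invariant if and only if $i_v\omega_r\neq0$.

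Now suppose $r=d-k$. Then the expansion of $\omega$ contains only the two homogeneous pieces $\omega_{d-k}$ and $\omega_d$. If $\omega_d=0$, the normalization recorded after \eqref{E:decomposition} gives $i_v\omega_{d-k}\neq0$, hence $i_v\omega_r\neq0$ and $E$ is invariant. If $\omega_d\neq0$, I argue by contradiction: were $E$ non-invariant, the criterion above would give $i_v\omega_r=i_v\omega_{d-k}=0$, while $i_v\omega_d=0$ always holds; thus $i_v\omega\equiv0$ on $U_2$, forcing $\omega=g\,(y\,dx-x\,dy)$ for some $g$ whose top-degree part (coming from $\omega_d$) is nonzero of degree $d-2\geq1$. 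Then $\omega$ vanishes along the curve $\{g=0\}$, so $\sing(\F)$ is not finite, contradicting that $\F$ is saturated. Therefore $E$ is $\G$-invariant whenever $r=d-k$, which closes the equivalence.

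For the final assertion, once $\G$ is Riccati on $\mb F_k$ with $k\geq1$ it must have at least one invariant fiber of the ruling: otherwise $\G$ would be transverse to every fiber and define a flat $\Proj^1$-connection, which over the simply connected base $\Proj^1$ would trivialize the bundle as $\Proj^1\times\Proj^1$, contradicting $k\geq1$. Such an invariant fiber is the strict transform of a line through $p_2$, so its image under $\pi$ is an $\F$-invariant line in $\Proj^2_k$. I expect the main obstacle to be the local criterion of the second paragraph — making the identification of $E$-invariance with the condition $i_v\omega_r\neq0$ fully rigorous through the cover and the blow-up — whereas everything else is formal bookkeeping with Proposition \ref{P:Riccati} and the saturation hypothesis.
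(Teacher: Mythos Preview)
Your proof is correct. For the biconditional, the paper simply asserts that ``the equivalence follows from Proposition~\ref{P:Riccati}'', leaving the reader to check the nontrivial direction $r=d-k\Rightarrow E$ is $\G$-invariant; you supply this explicitly via the dicriticality criterion $i_v\omega_r=0$ and the saturation hypothesis, which is exactly the content the paper elides. So for the equivalence you are not taking a different route, but rather filling in the details the paper omits.

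For the final clause about the invariant line, your argument is genuinely different from the paper's. The paper observes that since $E$ is $\G$-invariant and $E^2=-k<0$, Camacho--Sad forces a singularity of $\G$ on $E$, and a Riccati foliation cannot be transverse to the fiber through a singularity, so that fiber is invariant and projects to an invariant line. You instead argue globally: a Riccati foliation with no invariant fiber would be everywhere transverse to the ruling, hence a flat $\Proj^1$-connection over the simply connected base $\Proj^1$, forcing $\mb F_k\cong\Proj^1\times\Proj^1$, which fails for $k\ge 1$. Both arguments are valid; the paper's is more local and uses the specific hypothesis that $E$ is invariant, while yours is a clean topological obstruction that would apply to any Riccati foliation on $\mb F_k$ with $k\ge 1$, regardless of whether $E$ is invariant.
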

\begin{proof}
The equivalence follows from Proposition \ref{P:Riccati}. We can also see that if $\G$ is a Riccati 
foliation and $E$ is $\G$-invariant, then the fiber through a singularity in $E$ is  
invariant by $\G$, thus $\F$ has an invariant line.
\end{proof}

In the notation of Proposition \ref{P:Riccati} we will say that $\F$ is a {\it Riccati foliation} if $\G$ is a 
Riccati foliation. 

The following proposition characterizes the low  degree foliations
on $\Proj^2_\p$ with some algebraic solution.

\begin{prop}\label{P:Weighted special} Any foliation of normal degree $d$ on $\Proj^2_k$, 
with $2\leq d\leq 2\p$ admits some invariant line. Furthermore
\begin{enumerate}
 \item \label{E:logRiccati} If $\p+1\leq d\leq 2\p$, then every saturated foliation in $\Fo$ is
 a Riccati foliation. 
 \item \label{E:logspecial} A saturated foliation $\F\in \Fol_{k+2}$ is defined by a 1-form conjugated to  
\begin{enumerate}
 \item \label{E:logspecial1} $(k+1)(x_0x_2+x_1^{k+1})dx_0-x_0d(x_0x_2+x_1^{k+1})$, 
when $\F$ has exactly one singularity and one invariant line. 
\item \label{E:logspecial2} $x_0^{2}x_2\left(k\frac{dx_0}{x_0}-\frac{dx_2}{x_2}+d\left(\frac{x_1}{x_0}\right)\right)$
 when $\F$ has exactly two singularities and one invariant line.
  \item \label{E:logspecial3} $x_0^{k+1}x_1\left(\frac{dx_0}{x_0}-\frac{dx_1}{x_1}+d\left(\frac{x_2}{x_0^{k}}\right)\right)$
 when $\F$ has exactly two singularities and two invariant lines.
 \item \label{E:logspecial4} $x_0^{i+1}x_1^{k-i+1}\left(\frac{dx_0}{x_0}-\frac{dx_1}{x_1}+d\left(\frac{x_2}{x_0^ix_1^{k-i}}\right)\right)$, 
 $0< i< k$, when $\F$ has exactly three singularities and two invariant lines.  
\item \label{E:logspecial5} $x_0x_1x_2\left(a\frac{dx_0}{x_0}+b\frac{dx_1}{x_1}+c\frac{dx_2}{x_2}\right)$, 
where $a+b+ck=0$, $a,b,c\in\C$, when $\F$ has exactly three singularities and three 
invariant curves (of degree $1$, $1$ and $k$). 
\end{enumerate} 

\end{enumerate}
\end{prop}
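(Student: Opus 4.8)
The plan is to dispatch the opening claim and Item~\ref{E:logRiccati} together, via the Milnor--number formula and the Riccati dictionary on the minimal resolution $\pi\colon \mb F_k\to\Proj^2_k$, and then to settle the classification in Item~\ref{E:logspecial} by observing that a degree $k+2$ foliation is defined by a global vector field, which reduces the problem to a linear normal--form analysis.

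First I would set up the numerics for $\w=(1,1,k)$, so $|\w|=k+2$ and Proposition~\ref{P:Milnornumber} reads $k\,m(\F)=1+2k+(d-k-2)d$. Expanding $\omega$ at $p_2$ as in~(\ref{E:decomposition}) with step $l_2=k$, the algebraic multiplicity $r$ obeys $d=r+mk$ with $m\ge 0$ and $r\ge 1$; hence $d\le 2k$ forces $m\in\{0,1\}$. Since a generic fibre of the natural fibration is not $\F$-invariant once $d\ge 3$, Proposition~\ref{P:Riccati} gives $\Tang(\G,F)=\NG\cdot F-2=\tfrac{d-e}{k}-2\ge 0$, so $e\le d-2k$ always. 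When $m=1$ one has $d-2k=r-k$, and if $E$ were not $\G$-invariant then $e=r\le r-k$, which is absurd; thus $E$ is $\G$-invariant, $e=r-k=d-2k$, and $\G$ is Riccati. For a saturated $\F$ with $k+1\le d\le 2k$ the value $m=0$ would force $\omega$ homogeneous with $i_v\omega=0$, i.e. $\omega=C(y\,dx-x\,dy)$, whence $d=2$ after saturation; and $m\ge 2$ would give $d>2k$. So $m=1$ and $\F$ is Riccati, which is Item~\ref{E:logRiccati}.

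For the invariant--line statement when $2\le d\le 2k$ I would reduce to the saturated case by writing $\omega=F\,\omega'$ with $\omega'$ saturated of normal degree $d'\in[2,2k]$, noting that the invariant curves of $\F'=[\omega']$ are invariant for $\F$. The computation above shows that saturated foliations in this range occur only for $d'=2$ and for $d'\in\{k+1,\dots,2k\}$: for $3\le d'\le k$ the relation $d'=r+mk$ with $r\ge 1$ has no solution (as $m\ge 1$ gives $d'>k$ while $m=0$ gives $d'=2$), which is consistent with Example~\ref{E:particular case}. When $d'=2$, Proposition~\ref{P:Rational}(\ref{E:Rat1}) gives the pencil $x_1dx_0-x_0dx_1$, whose fibres through $p_2$ are invariant lines; when $d'\in\{k+1,\dots,2k\}$, Corollary~\ref{C:Riccati} supplies an invariant line. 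In either case $\F$ inherits an invariant line.

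Finally, for the classification at $d=k+2=|\w|$ I would use that $K_\F=\Ol_\Proj$, so $\F$ is given by a global vector field $X\in H^0(\Proj,\TP)$ modulo $R$: explicitly $X=(a_{00}x_0+a_{01}x_1)\partial_{x_0}+(a_{10}x_0+a_{11}x_1)\partial_{x_1}+(c\,x_2+P(x_0,x_1))\partial_{x_2}$ with $\deg P=k$, and $\Aut(\Proj^2_k)$ acts by $GL_2$--conjugation of the linear block $M=(a_{ij})$, rescaling of $x_2$, and substitutions $x_2\mapsto x_2+H(x_0,x_1)$. Following the template of Proposition~\ref{P:Logarithmic}, one first records that $m(\F)=2+\tfrac1k$ is not an integer while smooth singularities contribute integers, forcing $p_2\in\sing(\F)$ and $r=2$; one then normalises $\sing(\F)$ to lie among $p_0,p_1,p_2$ and removes $P$ by solving for $H$ a linear first--order equation whose solvability is exactly the non--resonance condition $c\neq i\lambda_0+j\lambda_1$ ($i+j=k$) on the eigenvalues of $M$. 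The five forms are precisely the resulting cases: distinct eigenvalues with no resonance give the logarithmic form~(\ref{E:logspecial5}); a single interior resonance $0<i<k$ gives~(\ref{E:logspecial4}); a boundary resonance $i\in\{0,k\}$ gives~(\ref{E:logspecial3}); while a non--diagonalisable (Jordan) $M$---a possibility new relative to Proposition~\ref{P:Logarithmic} precisely because $l_0=l_1$ enlarges the symmetry to $GL_2$---gives~(\ref{E:logspecial2}) when $c\neq k\lambda$ and the rational--pencil form~(\ref{E:logspecial1}) when $c=k\lambda$. I expect this last step to be the main obstacle: organising the case analysis so that the Jordan type of $M$ together with the resonance dichotomy reproduces \emph{exactly} the five listed forms with their stated numbers of singularities and invariant lines, verifying in each branch both that the conjugating substitution solves and that the outcome is saturated (the scalar case $M=\lambda I$, by contrast, always factors off $x_1dx_0-x_0dx_1$ and yields no saturated foliation of this degree).
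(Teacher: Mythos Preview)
Your argument for the opening claim and Item~(\ref{E:logRiccati}) is correct and essentially matches the paper's: both identify $r=d-k$ for a saturated foliation in the range $k+1\le d\le 2k$ and then invoke Corollary~\ref{C:Riccati}. The paper simply reads $r=d-k$ off the explicit shape of $\omega$ in the chart $U_2$, whereas you reach it by the dichotomy $m\in\{0,1\}$ together with the tangency inequality on $\mb F_k$; the conclusions coincide.

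For Item~(\ref{E:logspecial}) you take a genuinely different route. The paper organises the classification by the cardinality of $\sing(\F)\in\{\,\{p_2\},\{p_1,p_2\},\{p_0,p_1,p_2\}\,\}$ and then, in each case, reuses the explicit $1$-form manipulations already carried out in the proofs of Propositions~\ref{P:Rational} and~\ref{P:Logarithmic}. You instead exploit the fact that $d=|\w|$ makes $K_\F$ trivial and represent $\F$ by a global vector field $X$; the classification then becomes the $GL_2$-normal form of the linear block $M$ together with the resonance condition $c=i\lambda_0+(k-i)\lambda_1$ governing solvability of the cohomological equation for $H$. Your dictionary---diagonalisable with no resonance $\leadsto$ (\ref{E:logspecial5}), interior resonance $\leadsto$ (\ref{E:logspecial4}), boundary resonance $\leadsto$ (\ref{E:logspecial3}), Jordan block with $c\neq k\lambda\leadsto$ (\ref{E:logspecial2}), Jordan block with $c=k\lambda\leadsto$ (\ref{E:logspecial1}), and $M$ scalar $\leadsto$ non-saturated---is correct, and the singularity/invariant-line counts you assert do fall out (e.g.\ the boundary case kills exactly one of $p_0,p_1$ because the surviving monomial $x_0^k$ makes the $dx_1$-coefficient nonvanishing there). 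What your approach buys is a uniform explanation of why exactly five normal forms appear and a transparent reason for the two ``new'' Jordan cases (\ref{E:logspecial1})--(\ref{E:logspecial2}), namely that $l_0=l_1$ enlarges the automorphism group to contain $GL_2$. What the paper's approach buys is economy: no new computation is needed beyond pointing to the already-proved Propositions~\ref{P:Rational} and~\ref{P:Logarithmic}. One small caveat: your phrase ``distinct eigenvalues with no resonance give~(\ref{E:logspecial5})'' should be read to include the case where a resonance relation holds but the resonant coefficient of $P$ happens to vanish, since then $P$ is still removable and one lands in~(\ref{E:logspecial5}) rather than~(\ref{E:logspecial4}).
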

\begin{proof} First note that for $2<d\leq \p$, there are no saturated foliations. Now, 
we see that for $\p<d\leq 2\p$, a saturated foliation $\F\in \Fo$ 
admits an invariant line. In fact, $\F$ is given by a 1-form $\omega$ equal to 
\[
A(x_0,x_1)(\p x_2dx_0-x_0dx_2)+B(x_0,x_1)
(\p x_2dx_1-x_1dx_2)+C(x_0,x_1)(x_1dx_0-x_0dx_1),
\]
 where $A,B,C$ are quasi-homogeneous polynomials of
degree $d-\p-1$, $d-\p-1$ and $d-2$ respectively.
In the open set $U_2\simeq \C^2/\mu_{\p}$, we lift $\F|_{U_2}$ to
$\C^2$ and this lifting is given by
\[
\eta=\p A(x,y)dx+\p B(x,y)dy+C(x,y)(ydx-xdy).
\]
Since $\F$ is saturated we have $r=d-\p$, where $r$ is the algebraic multiplicity of $\F$ at $p_2$. 
Therefore by Corollary \ref{C:Riccati} $\F$ is a Riccati foliation and admits an invariant line. 
So Item (\ref{E:logRiccati}) follows. 

The assumption of Item (\ref{E:logspecial}) together with Proposition \ref{P:Milnornumber} imply that $m(\F)=2+\frac{1}{k}$. Thus up to an automorphism of $\Proj^2_k$ we can assume that $\sing(\F)=\{p_2\}$, or $\sing(\F)=\{p_1,p_2\}$ or 
$\sing(\F)=\{p_0,p_1,p_2\}$. Also, since the algebraic multiplicity of $\F$ at $p_2$ is $r=2$ the foliation 
$\F$ has at most two invariant lines. 

In the first case, $\sing(\F)=\{p_2\}$, our foliation $\F$ has just one invariant line. We can suppose 
this line is equal to $\{x_0=0\}$ and the proof follows by applying the same ideas we used in the proof of 
Proposition \ref{P:Rational}, Item (\ref{E:Rat3}). This proves Item (\ref{E:logspecial1}). 

In the second case, $\sing(\F)=\{p_1,p_2\}$, following the same ideas of Items (\ref{E:Log2}) and (\ref{E:Log3}) 
of Proposition \ref{P:Logarithmic} we get Item (\ref{E:logspecial2}) and (\ref{E:logspecial3}).

In the third case, $\sing(\F)=\{p_0,p_1,p_2\}$, using the same ideas from the proof of Items (\ref{E:Log1}) 
and (\ref{E:Log2}) of Proposition \ref{P:Logarithmic} we obtain Item (\ref{E:logspecial4}) and (\ref{E:logspecial5}).  
This completes the proof. 
\end{proof}

It is worth noting that in the general case $1\leq l_0 <l_1 <l_2$ using the same methods of this 
subsection we can show that a foliation of normal degree $d\leq 2l_2 + l_0-1$ on $\Proj$ 
admits an invariant curve.

\section{Foliations without algebraic leaves on $\Proj$}\label{S:wirthoutleaves}\label{Theorem}

\subsection{The universal singular set}
Set $X=\Proj\backslash \sing(\Proj)$. Recall that
$p_0=[1:0:0]$, $p_1=[0:1:0]$, $p_2=[0:0:1]$. We define the following
sets
\begin{align*}
   \Si(d):=&\{(x,\F)\in\,\Proj \times \Fo
   \,|\,x\in \sing(\F)\},\\
   \Si_X(d):=&\{(x,\F)\in\, X\times \Fo  |\,x\in \sing(\F)\},\\
   \Si_{p_i}(d):=&\{(p_i,\F)\in\,\Si(d)\},\,\,i=0,1,2.  
\end{align*}
We will see that the set $\Si_X(d)$ is irreducible, for this we need the following Lemma 
\cite[page 71]{Titu}, which we will use throughout this section. 
\begin{lemma}\label{L:sylvester}(Sylvester, 1894) For any positive integers $a$ and $b$ with 
$\gcd(a,b)=1$, define $g(a,b)$ to be the greatest positive integer $N$ for which the
equation
\begin{equation}\label{E:sylvester}
ax_1+bx_2=N,
\end{equation}
is not solvable in nonnegative integers. 
Then $g(a,b)=ab-a-b.$
\end{lemma}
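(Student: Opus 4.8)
The plan is to organize all nonnegative integers by their residue class modulo $a$ and to determine, class by class, exactly which ones are representable as $ax_1+bx_2$ with $x_1,x_2\geq 0$. The engine is the following observation: since $\gcd(a,b)=1$, the map $x_2\mapsto bx_2 \bmod a$ is a bijection of $\{0,1,\dots,a-1\}$ onto the complete residue system $\{0,1,\dots,a-1\}$. Hence for every $N$ there is a unique $x_2^{\ast}\in\{0,\dots,a-1\}$ with $bx_2^{\ast}\equiv N \pmod a$. I would then prove the sharp criterion
\[
N \text{ is representable} \iff N\geq bx_2^{\ast}.
\]
The ``if'' direction is immediate, because $N-bx_2^{\ast}$ is a nonnegative multiple of $a$, so $x_1=(N-bx_2^{\ast})/a\geq 0$ works. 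For the ``only if'' direction, in any representation $N=ax_1+bx_2$ one has $bx_2\equiv N\equiv bx_2^{\ast}\pmod a$, and invertibility of $b$ modulo $a$ forces $x_2\equiv x_2^{\ast}\pmod a$; since $x_2^{\ast}$ is the least nonnegative residue and $x_2\geq 0$, we get $x_2\geq x_2^{\ast}$, hence $bx_2\geq bx_2^{\ast}$, and $N<bx_2^{\ast}$ would make $ax_1=N-bx_2<0$, which is impossible.

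Next I would extract the Frobenius number by maximizing over residue classes. The criterion shows that, inside the class of $N$, the largest non-representable value is $bx_2^{\ast}-a$ (the largest integer in that class strictly below the threshold $bx_2^{\ast}$). As $x_2^{\ast}$ ranges over $\{0,\dots,a-1\}$, the quantity $bx_2^{\ast}-a$ is maximized at $x_2^{\ast}=a-1$, giving $b(a-1)-a=ab-a-b$. For the class $x_2^{\ast}=0$ this largest value is $-a<0$, so that class contributes no positive obstruction, which does not affect the maximum. Therefore $ab-a-b$ is the greatest integer that fails to be representable, which is precisely $g(a,b)$.

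As an independent and quicker check that $ab-a-b$ itself is not representable, I would use the multiplicative trick: if $ab-a-b=ax_1+bx_2$ with $x_1,x_2\geq 0$, then $ab=a(x_1+1)+b(x_2+1)$, and reducing modulo $a$ gives $a\mid(x_2+1)$, so $x_2+1\geq a$; symmetrically $x_1+1\geq b$, whence $a(x_1+1)+b(x_2+1)\geq 2ab>ab$, a contradiction. I expect the only delicate point to be establishing the sharp threshold $N\geq bx_2^{\ast}$, and in particular verifying that no smaller nonnegative choice of $x_2$ in the correct residue class is available; once that is in place, the maximization over classes and the boundary verification are routine.
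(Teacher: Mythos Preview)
Your argument is correct and complete; the residue-class threshold criterion is the standard and cleanest route to the two-variable Frobenius number, and your verification that $ab-a-b$ itself fails to be representable is also fine.

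There is nothing to compare against, however: the paper does not supply its own proof of this lemma. It simply states the result, attributes it to Sylvester (1894), and cites an external reference for the proof. So your write-up goes strictly beyond what the paper contains.

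One cosmetic remark: the statement speaks of the greatest \emph{positive} integer $N$, which tacitly requires $a,b\geq 2$ (otherwise $ab-a-b\leq -1$ and every nonnegative integer is representable). Your argument is valid in general and recovers this boundary behaviour automatically, since for $a=1$ or $b=1$ the maximum $bx_2^{\ast}-a$ over $x_2^{\ast}\in\{0,\dots,a-1\}$ is negative; you may wish to add a sentence flagging this if you want the write-up to match the hypothesis exactly.
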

\smallskip

\begin{prop}\label{P:irreducibleset} For all $d>l_1l_2$,
$\Si_X(d)$ is an irreducible subvariety and has codimension two in
$X\times\Fo$.
\end{prop}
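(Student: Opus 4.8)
The plan is to analyze $\Si_X(d)$ through its first projection $\pi_1\colon\Si_X(d)\to X$, $(x,\F)\mapsto x$, and to exhibit it as a projective bundle over the smooth locus $X$. Recall that $\Fo=\Proj\,H^0(\Proj,\Omega^{[1]}_{\Proj}(d))$ and that, via the Euler sequence, a section $\omega$ is a triple $(A_0,A_1,A_2)$ of quasi-homogeneous polynomials with $\deg A_i=d-l_i$ satisfying $\sum_i l_ix_iA_i=0$. For $x\in X$ one has $x\in\sing(\F)$ exactly when $\omega$ vanishes at $x$, so $\pi_1^{-1}(x)$ is the projectivization of the kernel of the evaluation map
\[
\mathrm{ev}_x\colon H^0(\Proj,\Omega^{[1]}_{\Proj}(d))\longrightarrow \Omega^{[1]}_{\Proj}(d)\big|_x .
\]
Since $X$ is smooth of dimension two and $\Omega^{[1]}_{\Proj}(d)$ is reflexive, its restriction to $X$ is locally free of rank two, so each target is a two-dimensional vector space.

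The decisive step I would establish is that $\mathrm{ev}_x$ is surjective for every $x\in X$ as soon as $d>l_1l_2$. Restricting the Euler sequence to $x$ identifies $\Omega^{[1]}_{\Proj}(d)|_x$ with the plane $\{(v_0,v_1,v_2)\colon\sum_i l_ix_iv_i=0\}$, under which $\mathrm{ev}_x(\omega)=(A_0(x),A_1(x),A_2(x))$ automatically lies in this plane; it therefore suffices to produce two sections with linearly independent values at $x$. Each such section is built from a single monomial: for instance, setting $A_1\equiv0$ and $A_0=x_2\,m$ forces $A_2=-\tfrac{l_0}{l_2}x_0\,m$, so one only needs a monomial $m$, nonzero at $x$, of degree $d-l_0-l_2$, and permuting the coordinates supplies the second independent value. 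At a point with all coordinates nonzero any monomial of the required degree serves, and the numerical semigroup generated by the coprime weights contains every integer exceeding $l_1l_2-l_1-l_2$, so these exist once $d>l_1l_2$.

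The delicate points lie on the coordinate lines, the binding one being $x=[0:a_1:a_2]$ with $a_1,a_2\neq0$. Here a monomial is nonzero at $x$ only if it is free of $x_0$, hence lies in the semigroup $\langle l_1,l_2\rangle$. To realize the value with $A_0(x)=0$ and $A_1(x)\neq0$ I would take $A_0\equiv0$, $A_1=x_1^{\alpha}x_2^{\beta}$ and $A_2=-\tfrac{l_1}{l_2}x_1^{\alpha+1}x_2^{\beta-1}$, which demands a solution of $d-l_1-l_2=\alpha l_1+(\beta-1)l_2$ with $\alpha,\beta-1\ge0$; by Lemma \ref{L:sylvester} this is solvable precisely because $d-l_1-l_2>l_1l_2-l_1-l_2$, that is $d>l_1l_2$. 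A companion section built the same way gives an independent value, and the lines $\{x_1=0\}$, $\{x_2=0\}$ together with the smooth coordinate points (those $p_i$ with $l_i=1$) are handled identically, yielding only the weaker bounds involving $l_0l_2$ or $l_0l_1$, all implied by $d>l_1l_2$ because $l_0\le l_1\le l_2$.

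Granting the claim, the maps $\mathrm{ev}_x$ glue into a surjection of locally free sheaves $H^0(\Proj,\Omega^{[1]}_{\Proj}(d))\otimes\Ol_X\twoheadrightarrow\Omega^{[1]}_{\Proj}(d)|_X$ on $X$, whose kernel $\K$ is a subbundle of rank $h^0-2$, where $h^0=\dim_{\C}H^0(\Proj,\Omega^{[1]}_{\Proj}(d))$. By construction $\Si_X(d)=\Proj(\K)$ is the associated $\Proj^{\,h^0-3}$-bundle over $X$. As $X$ is irreducible, being a nonempty open subset of the irreducible variety $\Proj$, a projective bundle over it is irreducible; hence $\Si_X(d)$ is irreducible. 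Finally $\dim\Si_X(d)=2+(h^0-3)=h^0-1$ while $\dim(X\times\Fo)=2+(h^0-1)$, so $\Si_X(d)$ has codimension two. The main obstacle is the surjectivity of $\mathrm{ev}_x$ at the special points of $X$, where only monomials avoiding the vanishing coordinate are available; this turns section-building into a two-generator representation problem, and it is exactly here that Lemma \ref{L:sylvester} forces the sharp threshold $d>l_1l_2$.
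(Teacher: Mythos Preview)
Your argument is correct and follows essentially the same route as the paper: both exhibit $\Si_X(d)$ as a projective bundle over $X$ by showing that at every point of $X$ one can produce two sections of $\Omega^{[1]}_{\Proj}(d)$ with linearly independent values, the required monomials being supplied by Sylvester's Lemma~\ref{L:sylvester}. The only cosmetic difference is that the paper organizes the case analysis via the four orbits of $\Aut(\Proj)$ on $X$ and writes down the explicit $1$-forms $\alpha,\beta,\delta_0$, whereas you phrase the same verification sheaf-theoretically through the evaluation map and a direct case split on which coordinates vanish.
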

\begin{proof} Consider the projection $\pi_1:\Si(d)\rightarrow
\Proj$. For every $x\in \Proj$, the fiber
$\pi_1^{-1}(x)$ is a subvariety of $\{x\}\times \Fo$,  
and it is isomorphic to a projective space, since if two
1-forms $\eta$ and $\eta'$ vanish at $x$, then the same is true for a
linear combination of them.

The action of the automorphism group of $\Proj$ on $X$ has four distinct orbits. 
In order to prove the proposition 
we will find 
for each of these orbits two $1$-forms which are
linearly independent at a point of any orbit. 

Since $d > l_1 l_2 \ge l_0 l_2$ 
we can apply Lemma \ref{L:sylvester} to obtain
positive integers $i_{1},j_{1}, i_{2}$, $j_{2}$ such that 
$i_{1} l_1 +j_{1} l_2 =i_{2} l_0 + j_{2}l_1=d$, and consequently 
$\alpha = x_1^{i_{1}-1} x_2^{j_{1}-1} ( l_2 x_2 dx_1 - l_1 x_1 dx_2)$ 
and $\beta = x_0^{i_{2}-1} x_1^{j_{2}-1} ( l_1 x_1 dx_0 - l_0 x_0 dx_1)$ 
belong $H^0(\mathbb P, \Omega^{[1]}_{\Proj}(d))$.

If $p=[1:1:1]$ then $V_p = < \alpha(p),\beta(p) > $, the
vector space generated by the evaluation of $\alpha$ and $\beta$ 
at $p$, has dimension $2$.

At the point $q_0=[0:1:1]$, the analogous vector space has dimension
two. But we can apply Lemma \ref{L:sylvester} to write $d -l_0 = i
l_1 + j l_2$ with $i$ and $j$ positive integers, and see that
$\delta_0 = x_1^{i -1} x_2^{j} ( l_0 x_0 dx_1 - l_1 x_1 dx_0 )$
belongs to $H^0(\mathbb P, \Omega^{[1]}_{\Proj}(d))$.
By considering the evaluation of $\delta_0$ and $\alpha$ at $q_0$, 
we see that they are $\mathbb C$-linearly independent. Applying the
same  argument to the others points $q_1= [1:0:1]$ and
$q_2=[1:1:0]$, we conclude that $\pi_1^{-1}(x) \subset X \times \Fo$ 
always have codimension two. It follows that $\Si_X(d)$ is
projective bundle over $X$, and therefore is smooth and irreducible.
\end{proof}

It is worth mentioning that in general the set $\Si(d)$ is not irreducible.

Recall that the sets $\Co_{n}(d)$, $\Dl_{n}(d)$ and $\Co^{p_i}_{n}(d)$, for each $i\in\{0,1,2\}$, 
are closed (see Lemma \ref{L:closedset}). So, we can state the following proposition. 

\begin{prop}\label{P:setcontradiction} Assume that  $d>l_1l_2$ and $\Co_{n}(d)=\Fo$ for some $n>0$.
\begin{enumerate}
 \item\label{P:condition1} If 
 $\Co^{p_i}_n(d)=\Co_n(d)$ for some $i$,
 then $\Si_{p_i}(d)\subset \Dl_n(d).$
 \item\label{P:condition2} If 
 $\Co^{p_i}_n(d)\neq \Co_n(d)$
 for every $i$,
 then $\Si_{X}(d)\subset \Dl_n(d).$
 \end{enumerate}
\end{prop}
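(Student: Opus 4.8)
The plan is to dispatch the two items separately: Item (1) is a direct unwinding of definitions, while Item (2) rests on the irreducibility of $\Si_X(d)$ from Proposition \ref{P:irreducibleset} together with an index-theoretic input. For Item (1), the two standing hypotheses $\Co_{n}(d)=\Fo$ and $\Co^{p_i}_{n}(d)=\Co_{n}(d)$ combine to give $\Co^{p_i}_{n}(d)=\Fo$, which says exactly that for \emph{every} foliation $\F$ the point $p_i$ lies on some $\F$-invariant curve of degree $n$; equivalently $\{p_i\}\times\Fo\subset\Dl_{n}(d)$. Since $\Si_{p_i}(d)\subset\{p_i\}\times\Fo$ by definition, the containment $\Si_{p_i}(d)\subset\Dl_{n}(d)$ follows at once.

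For Item (2) I would argue by contradiction. Recall from Proposition \ref{P:irreducibleset} that $\Si_X(d)$ is an irreducible projective bundle over the surface $X$, with fibres of codimension two, so $\dim\Si_X(d)=\dim\Fo$; and recall from Lemma \ref{L:closedset} that $\Dl_{n}(d)$ is closed. Suppose $\Si_X(d)\not\subset\Dl_{n}(d)$. Then $\Si_X(d)\cap\Dl_{n}(d)$ is a proper closed subvariety of the irreducible $\Si_X(d)$, hence has dimension strictly less than $\dim\Fo$. Projecting onto the second factor, its image lies in a proper closed subset $Z\subsetneq\Fo$, so for every foliation $\F$ in the dense open set $\Fo\setminus Z$ no singular point of $\F$ lying in $X$ belongs to any $\F$-invariant curve of degree $n$.

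Now I would bring in the index theory for foliations on $\Proj$, valid because $\Proj$ has only quotient singularities, as recalled before Proposition \ref{P:Milnornumber}. Since $\Co_{n}(d)=\Fo$, each such $\F$ carries an invariant curve $C$ of degree $n$; as $\Pic(\Proj)\otimes\Q$ is generated by an ample class, $C$ has positive self-intersection, and the Camacho--Sad index formula then forces $\sing(\F)\cap C\neq\emptyset$. For $\F\in\Fo\setminus Z$ no such intersection point can lie in $X$, so it must be one of the quotient-singular points of $\Proj$, say $p_i$; hence $p_i\in C$ and $\F\in\Co^{p_i}_{n}(d)$. Thus the dense set $\Fo\setminus Z$ is contained in the closed set $\Co^{p_0}_{n}(d)\cup\Co^{p_1}_{n}(d)\cup\Co^{p_2}_{n}(d)$, so this union is all of $\Fo$. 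Since $\Fo$ is irreducible and each $\Co^{p_i}_{n}(d)$ is closed, one of them equals $\Fo=\Co_{n}(d)$, contradicting the hypothesis of Item (2). Therefore $\Si_X(d)\subset\Dl_{n}(d)$.

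The main obstacle is the step that converts the set-theoretic dichotomy into geometry, namely the assertion that an invariant curve cannot avoid the singular locus of the foliation. I expect the care to concentrate on two points: first, justifying the relevant Camacho--Sad/index statement on the singular surface $\Proj$ and that a degree-$n$ curve has positive self-intersection there; and second, verifying that $\sing(\Proj)\subset\{p_0,p_1,p_2\}$, so that a forced singular point failing to lie in $X$ is genuinely one of the $p_i$ and is therefore detected by one of the sets $\Co^{p_i}_{n}(d)$.
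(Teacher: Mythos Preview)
Your proof is correct and uses essentially the same approach as the paper. Both arguments for Item~(2) rest on the irreducibility and dimension of $\Si_X(d)$ from Proposition~\ref{P:irreducibleset} together with the Camacho--Sad input that an invariant curve with $C^2>0$ must meet $\sing(\F)$; the only difference is organizational---you argue by contradiction, ending with $\Fo=\bigcup_i\Co^{p_i}_n(d)$ and invoking irreducibility of $\Fo$, while the paper works directly on the nonempty open set $U_n=\Fo\setminus\bigcup_i\Co^{p_i}_n(d)$, shows that the projection $\pi:\pi^{-1}(U_n)\cap\Dl_n(d)\to U_n$ is surjective, deduces $\pi^{-1}(U_n)\subset\Dl_n(d)$ by the dimension count, and then takes closures. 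Your justification of $C^2>0$ via the ample generator of $\Pic(\Proj)\otimes\Q$ is slightly cleaner than the paper's pullback argument through $\overline\varphi:\Proj^2\to\Proj$.
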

\begin{proof} Item (\ref{P:condition1}) follows from definitions.  

We denote by $\pi$ the restriction of the natural projection of $\Si_X(d)$ to $\Fo$ and 
define the open set $U_n=\Fo\backslash\cup_{i=0}^2(\Co^{p_i}_n(d))$. The assumption of 
item (\ref{P:condition2}) implies that $U_n$ is a nonempty open set. 
Let $\F \in U_n$ be a saturated foliation. Then there exists a
curve $C$ of degree $n$ invariant by $\F$ that does not pass through $p_0$, $p_1$ and $p_2$.  
Since the map $\overline\varphi:\Proj^2\rightarrow \Proj$, given by $\overline\varphi([x_0:x_1:x_2])=[x_0^{l_0}:x_1^{l_1}:x_2^{l_2}]$, 
is generically finite and any curve on 
$\mathbb P^2$ has positive self-intersection we see that $C^2>0$. 
Using Camacho-Sad
Theorem \cite[page 5]{Brun2} $C^2=\sum_{p\in X \cap
C}CS(\F,C,p)$, it follows that there exists 
$p\in \sing(\F)\cap X$. Therefore
\[
\pi(\pi^{-1}(U_n)\cap\Dl_n(d))=U_n.
\]
By Proposition \ref{P:irreducibleset} we get $\pi^{-1}(U_n)$ is an
irreducible open set and $$\dim \pi^{-1}(U_n)=\dim \Fo.$$ 
Since $\dim\pi(\pi^{-1}(U_n)\cap\Dl_n(d))=\dim U_n=\dim \Fo$, we have  
\[
\pi^{-1}(U_n)\cap\Dl_n(d)=\pi^{-1}(U_n).
\]
Taking closure in $\Proj \times \Fo$, we get $\overline{\Si_X(d)}\cap\Dl_n(d)=\overline{\Si_X(d)}$.  
Items (\ref{P:condition2}) follows. 
\end{proof}

We see that when the conclusion of the above proposition is valid, we have that 
for each singularity of $\Proj$ that is also a singularity of each foliation $\F\in\Fo$ passes an invariant curve by $\F$ of degree $n$, or for each singularity in $X$ of each foliation $\F\in\Fo$ passes an invariant curve by $\F$ of degree $n$. 
In the next subsection in order to prove Theorem \ref{T:Weighted general} we will construct examples 
that contradict the conclusion of Items (\ref{P:condition1}) and (\ref{P:condition2}) of 
the above proposition for $d\gg 0$.

\subsection{Existence of singularities without algebraic separatrix}

First, we cons\-truct  a family of examples that contradict Item (\ref{P:condition2}) of
Proposition \ref{P:setcontradiction} for $d\gg 0$. The following example is
an adaptation of an example of J. V. Pereira,  see \cite[page
5]{Jorge}.
\bigskip

Let $\F_0$ be the foliation on $\Proj$ induced by the following 1-form
\[
\beta=x_0x_1x_2G\left(\lambda
l_1l_2\frac{dx_0}{x_0}+\mu l_0l_2\frac{dx_1}{x_1}+ +\gamma
l_0l_2\frac{dx_2}{x_2}-(\lambda+\mu+\gamma)\frac{dG} {G}\right),
\]
where $G(x_0,x_1,x_2)=x_0^{l_1l_2}+x_1^{l_0l_2}+x_2^{l_0l_1}$. Then
$\deg(\NFo)=l_0l_1l_2+l_0+l_1+l_2$. 

\begin{prop}\label{P:example1}
If $\lambda$, $\mu$ and $\gamma$  are $\Z$-linearly independent, then there are singularities of $\F_0$ 
in $\Proj \backslash \{x_0x_1x_2=0\}$ such that no invariant algebraic curves passes through them.
\end{prop}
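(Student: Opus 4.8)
The plan is to establish two facts and combine them: first, that $\F_0$ really does have singularities in the open set $\Proj\setminus\{x_0x_1x_2=0\}$, lying in fact off the \emph{entire} polar divisor $\{x_0x_1x_2G=0\}$; and second, that every irreducible algebraic curve invariant by $\F_0$ must be one of the four polar components $\{x_0=0\}$, $\{x_1=0\}$, $\{x_2=0\}$, $\{G=0\}$. Granting these, any singular point $p$ of the first kind lies on no invariant algebraic curve whatsoever, which is exactly the assertion of the proposition.

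For the location of singularities I would expand $\beta=x_0x_1x_2G\cdot\eta=\sum_i B_i\,dx_i$ and factor the monomial common to each coefficient, so that on $\{x_0x_1x_2\ne0\}$ (using $x_iG_{x_i}=l_jl_k\,x_i^{\,l_jl_k}$) the conditions $B_0=B_1=B_2=0$ reduce to
\[
\lambda G=(\lambda+\mu+\gamma)x_0^{l_1l_2},\quad \mu G=(\lambda+\mu+\gamma)x_1^{l_0l_2},\quad \gamma G=(\lambda+\mu+\gamma)x_2^{l_0l_1},
\]
that is, $x_0^{l_1l_2}:x_1^{l_0l_2}:x_2^{l_0l_1}=\lambda:\mu:\gamma$. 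The $\Z$-linear independence of $\lambda,\mu,\gamma$ forces all of them, and also $\lambda+\mu+\gamma$, to be nonzero, so this system has solutions, each necessarily with every $x_i\ne0$ and with $G=(\lambda+\mu+\gamma)t\ne0$. Thus $\sing(\F_0)\cap\{x_0x_1x_2\ne0\}$ is nonempty and avoids the whole polar divisor.

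The heart of the matter is the second step. Here I would use that $\F_0$ is defined by the closed logarithmic form $\eta=\lambda l_1l_2\frac{dx_0}{x_0}+\mu l_0l_2\frac{dx_1}{x_1}+\gamma l_0l_1\frac{dx_2}{x_2}-(\lambda+\mu+\gamma)\frac{dG}{G}$, whose residues along the four components are $\lambda l_1l_2,\ \mu l_0l_2,\ \gamma l_0l_1,\ -(\lambda+\mu+\gamma)$. Let $C$ be an irreducible invariant curve and $\nu\colon\tilde C\to C$ its normalization; invariance gives $\nu^*\eta\equiv0$. Taking the residue at a point $\tilde q\in\tilde C$ over $q\in C$ yields
\[
\lambda l_1l_2\,m_0+\mu l_0l_2\,m_1+\gamma l_0l_1\,m_2-(\lambda+\mu+\gamma)\,m_G=0,
\]
where $m_0,m_1,m_2,m_G\ge0$ are the vanishing orders at $\tilde q$ of $x_0,x_1,x_2,G$ pulled back to $\tilde C$ (the local intersection multiplicities of $C$ with the respective components). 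Regrouping as $\lambda(l_1l_2m_0-m_G)+\mu(l_0l_2m_1-m_G)+\gamma(l_0l_1m_2-m_G)=0$ with integer coefficients and invoking $\Z$-linear independence forces $l_1l_2m_0=l_0l_2m_1=l_0l_1m_2=m_G$ at every point of $\tilde C$.

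I would then finish by contradiction: if $C$ is none of the four components then $C\cdot\{G=0\}=\deg C>0$ by the intersection theory on $\Proj$, so some point has $m_G>0$; the displayed equalities then give $m_0,m_1,m_2>0$, placing that point on $\{x_0=0\}\cap\{x_1=0\}\cap\{x_2=0\}=\varnothing$, which is absurd. Hence $C$ is a polar component, every invariant algebraic curve is supported on $\{x_0x_1x_2G=0\}$, and the singular points found above lie on none of them. I expect the main obstacle to be making the residue identity fully rigorous — justifying $\nu^*\eta\equiv0$ and identifying each residue of $\nu^*\frac{dF_i}{F_i}$ with the nonnegative integral intersection multiplicity uniformly, including over singular points of $C$ and over $p_0,p_1,p_2$; passing to the normalization is precisely what makes this clean. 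Checking that the located points are honest isolated singularities of the saturated foliation is a minor additional verification.
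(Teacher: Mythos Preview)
Your argument is correct and in fact establishes the stronger statement that every irreducible $\F_0$-invariant curve is contained in the polar divisor $\{x_0x_1x_2G=0\}$. The residue identity you set up is exactly the right mechanism, and the worry you flag is easily settled: lift $\nu$ locally to $\C^3\setminus\{0\}$ (possible since $\tilde C$ is smooth), and note that $\eta$ is $\C^*$-invariant and annihilates $R$, so the lifted pullback coincides with $\nu^*$ of the descended form; the latter vanishes on the dense open subset of $\tilde C$ mapping off the poles, hence identically, and the residue of each $d(\hat\nu^*F)/\hat\nu^*F$ is precisely $\ord_{\tilde q}(\hat\nu^*F)$, a nonnegative integer independent of the choice of lift. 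A minor imprecision: you speak of ``four components'', implicitly assuming $\{G=0\}$ irreducible; but your contradiction only needs that $C$ is not contained in $\{x_0x_1x_2G=0\}$, so this does not matter.

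The paper takes a shorter and genuinely different route: it simply observes that $\F_0=\phi^*\F$, where $\phi\colon\Proj\to\Proj^2$ sends $[x_0:x_1:x_2]$ to $[x_0^{l_1l_2}:x_1^{l_0l_2}:x_2^{l_0l_1}]$ and $\F$ is the degree-two logarithmic foliation on $\Proj^2$ with poles on $\{xyz(x+y+z)=0\}$ and residues $\lambda,\mu,\gamma,-(\lambda+\mu+\gamma)$, and then invokes \cite[Lemma~2]{Jorge} for $\F$. Any $\F_0$-invariant curve through a singular point $p\in\Proj\setminus\{x_0x_1x_2=0\}$ would push forward under $\phi$ to an $\F$-invariant algebraic curve through $\phi(p)$, contradicting that lemma. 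Your approach is essentially the proof of the cited lemma carried out directly on the weighted projective plane: longer but entirely self-contained, whereas the paper's pullback reduction is a two-line argument at the price of an external reference. The pullback viewpoint also makes the existence of singularities off $\{x_0x_1x_2=0\}$ immediate (they are $\phi^{-1}$ of the three singular points of $\F$ off the four lines), which you instead obtain by the explicit computation in your first step.
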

\begin{proof} Observe that $\F_0=\phi^*\F$, where 
\begin{align*}
\phi: \Proj &\rightarrow \Proj^2 \\
\,[x_0:x_1:x_2] & \mapsto [x_0^{l_1l_2}:x_1^{l_0l_2}:x_2^{l_0l_1}], 
\end{align*}
and $\F$ is the foliation on $\Proj^2$ induced by 
$$\Omega=xyz(x+y+z)\left(\lambda
\frac{dx}{x}+\mu \frac{dy}{y}+ +\gamma
\frac{dz}{z}-(\lambda+\mu+\gamma)\frac{d(x+y+z)} {x+y+z}\right).$$
Then the proof follows from \cite[Lemma 2]{Jorge}.
\end{proof}
\begin{cor}\label{C:generalcase}
For all $d>l_0l_1l_2+l_0l_1+l_2$, there exists a foliation $\F$ on $\Proj$ with the following properties
\begin{itemize}
\item its normal $\Q$-bundle has degree $d$.
\item there are singularities of $\F$ in $\Proj \backslash \{x_0x_1x_2=0\}$ such that no $\F$-invariant algebraic curve passes through them. 
\end{itemize} 
\end{cor}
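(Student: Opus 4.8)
The plan is to manufacture the required foliations in every degree $d$ above the stated threshold out of the single explicit example produced by Proposition~\ref{P:example1}, by multiplying its defining $1$-form by a monomial that raises the normal degree while leaving untouched the singularity with no algebraic separatrix. First I would fix constants $\lambda,\mu,\gamma$ that are $\Z$-linearly independent and take the foliation $\F_0=[\beta]$ of Proposition~\ref{P:example1}; its normal $\Q$-bundle has degree $d_0:=l_0l_1l_2+l_0+l_1+l_2$, and by that proposition there is a point $p\in\sing(\F_0)$ lying in $\Proj\setminus\{x_0x_1x_2=0\}$ through which no $\F_0$-invariant algebraic curve passes. In particular all three homogeneous coordinates of $p$ are nonzero.

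Given $d>l_0l_1l_2+l_0l_1+l_2$, set $m:=d-d_0$. A direct subtraction shows that the hypothesis is precisely the inequality $m>l_0l_1-l_0-l_1=g(l_0,l_1)$, so Lemma~\ref{L:sylvester} yields nonnegative integers $a,b$ with $al_0+bl_1=m$. I then take $H:=x_0^ax_1^b$, a quasi-homogeneous polynomial of degree $m$, and define the (non-saturated) foliation $\F:=[H\beta]$. Multiplying a twisted $1$-form by a quasi-homogeneous polynomial of degree $m$ raises the normal degree by $m$ (this is the content of the maps $\phi_n$ introduced earlier), so $\deg\NF=d_0+m=d$, which is the first required property; and since $\beta$ vanishes at $p$ so does $H\beta$, whence $p\in\sing(\F)$.

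It remains to transfer the absence of a separatrix at $p$ from $\F_0$ to $\F$, and this is the only step that needs an argument. Suppose, for contradiction, that some irreducible $\F$-invariant curve $C=\{F=0\}$ contained $p$. Because $p$ avoids both $\{x_0=0\}$ and $\{x_1=0\}$, the curve $C$ is neither of these, so $F$ is coprime to $H=x_0^ax_1^b$. Writing the invariance of $C$ for $\omega=H\beta$ through equation~(\ref{E:invariant}) gives $F\mid H\,(\beta\wedge dF)$, and coprimality then forces $F\mid\beta\wedge dF$; that is, $C$ is an $\F_0$-invariant algebraic curve through $p$, contradicting Proposition~\ref{P:example1}. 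Thus $p$ is a singularity of $\F$ with no $\F$-invariant algebraic curve through it. The heart of the matter is exactly this coprimality: it is guaranteed only because $p$ lies off $\{x_0x_1x_2=0\}$, so near $p$ the extra factor $H$ is a nonvanishing unit and cannot contribute any new invariant branch. The chosen bound on $d$ is dictated by nothing more than the Frobenius number $g(l_0,l_1)$, which is what makes every admissible degree reachable by such a monomial multiplication.
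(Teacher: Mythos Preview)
Your proof is correct and follows exactly the paper's approach: start from the foliation $\F_0$ of Proposition~\ref{P:example1}, use Sylvester's Lemma to write $d-\deg\NFo$ as $al_0+bl_1$, and multiply $\beta$ by the monomial $x_0^{a}x_1^{b}$. The paper simply asserts that the resulting $1$-form ``has the desired property,'' whereas you supply the justification via coprimality of $F$ and $H$ in equation~(\ref{E:invariant}); this extra step is welcome and correct.
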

\begin{proof} Let $\F_0$ be the foliation on $\Proj$ of Proposition 
\ref{P:example1} induced by $\beta$. Note that, 
if $d>l_0l_1l_2+l_0l_1+l_2$, then $d-\deg(\NFo)>l_0l_1-l_0-l_1$. By 
Lemma \ref{L:sylvester} we can find $i,j>0$ such that  
$\omega=x_0^i x_1^j\beta$ belongs to 
$H^0(\Proj, \Omega^{[1]}_{\Proj}(d))$ 
and has the desired property. \end{proof}

\bigskip
Now, we are going to construct a family of examples that contradict
Item (\ref{P:condition1}) of Proposition \ref{P:setcontradiction} for $d\gg 0$.
\smallskip

For every $j_0=1,\ldots,l_2$,  let $j_1$ be the unique integer
satisfying $1\le j_1 \le l_2$ and $l_0 j_0 \equiv l_1 j_1 \pmod {l_2}$.
Consider the foliation $\F$ in $\C^2$ induced by the  1-form
\[
\eta=(x_1^{l_2}-1)x_0^{j_0-1}dx_0-a (x_0^{l_2}-1)x_1^{j_1-1}dx_1,
\]
in which $a\in \C\backslash \R$. 

\begin{lemma}\label{L:nondicriticalanddicrital}
 The foliation $\F$ does not have any $\F$-invariant algebraic curve
 passing through the point $(0,0)$.
\end{lemma}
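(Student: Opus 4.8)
The plan is to exploit that the variables in $\eta$ separate. Writing $\zeta=e^{2\pi i/l_2}$ and factoring $x_i^{l_2}-1=\prod_{k=0}^{l_2-1}(x_i-\zeta^k)$, I would set $g=(x_0^{l_2}-1)(x_1^{l_2}-1)$ and observe that $\eta=g\,\theta$, where
\[
\theta=\frac{x_0^{j_0-1}}{x_0^{l_2}-1}\,dx_0-a\,\frac{x_1^{j_1-1}}{x_1^{l_2}-1}\,dx_1
=\frac1{l_2}\sum_{k}\zeta^{kj_0}\frac{dx_0}{x_0-\zeta^k}-\frac{a}{l_2}\sum_{k}\zeta^{kj_1}\frac{dx_1}{x_1-\zeta^k}
\]
is a closed logarithmic $1$-form. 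Thus $\F$ is logarithmic, its invariant lines are the $2l_2$ lines $\{x_0=\zeta^k\}$ and $\{x_1=\zeta^k\}$, with residues $\zeta^{kj_0}/l_2$ and $-a\zeta^{kj_1}/l_2$ respectively, and \emph{none} of these lines passes through $(0,0)$. I would then argue by contradiction: suppose $C=\{P=0\}$ is an irreducible $\F$-invariant curve with $(0,0)\in C$. Since $C$ is none of the lines above (and $\{x_i=0\}$ is readily checked not to be invariant), $x_0|_C$ and $x_1|_C$ are nonconstant, so $C$ is a genuinely new invariant curve.

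The invariance of $C$ means $i^*\theta\equiv 0$ on the smooth locus; pulling back by the normalization $\nu\colon\tilde C\to\bar C\subset\Proj^2$ I obtain the identity of meromorphic $1$-forms on the compact Riemann surface $\tilde C$,
\[
\Psi:=\nu^*\!\left(\frac{x_0^{j_0-1}\,dx_0}{x_0^{l_2}-1}\right)=a\,\nu^*\!\left(\frac{x_1^{j_1-1}\,dx_1}{x_1^{l_2}-1}\right)=:a\,\Phi .
\]
Since $a\neq0$, $\Psi$ and $\Phi$ have the same polar divisor. As the affine poles of $\Psi$ lie over $\{x_0\in\mu_{l_2}\}$ and those of $\Phi$ over $\{x_1\in\mu_{l_2}\}$, any affine $p\in\tilde C$ with $x_0(p)=\zeta^{k}$ must also satisfy $x_1(p)=\zeta^{k'}$; that is, $C$ meets an invariant line only at a \emph{corner} $\{x_0^{l_2}=x_1^{l_2}=1\}$ (a transverse crossing of a single line would give a nonzero residue of $i^*\theta$, impossible). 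Matching residues at such a corner yields $\zeta^{kj_0}\ord_p(x_0-\zeta^{k})=a\,\zeta^{k'j_1}\ord_p(x_1-\zeta^{k'})$, so a corner can lie on $\bar C$ only when $a$ is a \emph{positive rational multiple of a root of unity}. Finally, because $x_0|_{\bar C}\colon\bar C\to\Proj^1$ is finite surjective and $\bar C$ meets every invariant line by B\'ezout, $\bar C$ is forced to pass through the two vertices $q_0=[0:1:0]$ and $q_1=[1:0:0]$, where the pencils $\{x_0=\zeta^kx_2\}$ and $\{x_1=\zeta^kx_2\}$ concentrate.

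For the main case where $a$ is \emph{not} a positive rational multiple of a root of unity, the previous step shows $\bar C$ avoids every corner and meets the invariant lines only at $q_0,q_1$. I would corroborate this locally: at a corner $(\zeta^{k},\zeta^{k'})$ the form $\theta$ reads, in coordinates $u=x_0-\zeta^k$, $v=x_1-\zeta^{k'}$, as $\tfrac{\zeta^{kj_0}}{l_2}\tfrac{du}{u}-\tfrac{a\zeta^{k'j_1}}{l_2}\tfrac{dv}{v}+(\text{holomorphic})$, a linearizable Poincar\'e singularity whose quotient of residues is $-a\,\zeta^{k'j_1-kj_0}$. For these generic $a$ this quotient is not a positive rational, so the singularity is a hyperbolic saddle whose only separatrices are the two axes (equivalently, its holonomy has multiplier of modulus $\neq1$); this is exactly where the hypothesis $a\notin\R$ enters, and it again excludes $\bar C$ from the corners.

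The crux, and the step I expect to be the main obstacle, is the behaviour at $q_0,q_1$ (and, when $j_0,j_1\ge2$, at the point $[0:0:1]=(0,0)$), where an entire pencil of invariant lines together with the line at infinity meet, so these are non-elementary singularities through which $\bar C$ is nonetheless forced to pass. The heart of the argument is to resolve them by successive blow-ups and to verify that the non-reality of $a$ \emph{propagates}: along each component of the exceptional divisor the induced singularities carry indices that are again non-real functions of $a$ (hyperbolic holonomy), so no branch of the strict transform of $\bar C$ can be a separatrix there — contradicting $q_0,q_1\in\bar C$. This resolution step must also absorb the remaining special (non-real) values of $a$ that are positive rational multiples of roots of unity, for which the corner argument is inconclusive. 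An equivalent index-theoretic packaging is to invoke the Camacho--Sad formula $\bar C^2=\sum_{p}\mathrm{CS}(\F,\bar C,p)\in\Z_{>0}$ and to show that the local indices forced by the residue relations are non-real, so their sum cannot equal the positive integer $\bar C^2$; in either formulation the delicate point is the same, namely controlling the degenerate vertices where many invariant lines cross.
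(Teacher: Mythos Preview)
Your logarithmic setup is correct and the first phase of the argument---that the corner singularities $(\zeta^{k},\zeta^{k'})$ are reduced with only the two axis separatrices, so $C$ avoids them and is forced (by B\'ezout) through the vertices $q_0=[0:1:0]$, $q_1=[1:0:0]$---is essentially what the paper does as well. In the case $j_0=j_1=l_2$ your proposed endgame also matches the paper: one blow-up at $[1:0:0]$ gives an \emph{invariant} exceptional divisor $E$, and the singularities on $E$ are all reduced and lie on $E\cap(L_2\cup F)$, so $\tilde C$ cannot meet $E$, a contradiction.

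The genuine gap is in the complementary case $j_0,j_1<l_2$. There the singularities at $q_0$ and $q_1$ are \emph{dicritical}: the lowest-order part of the form at $[1:0:0]$ is a multiple of $y\,dz-z\,dy$, so after blowing up the exceptional divisor is \emph{not} $\tilde\G$-invariant. Consequently there are infinitely many analytic separatrices through each vertex, Camacho--Sad indices along $E$ are not defined, and your plan of ``checking that non-real indices propagate so that no branch of $\tilde C$ can be a separatrix'' simply does not apply---many branches are separatrices. Neither the index-propagation sketch nor the Camacho--Sad packaging you propose can close this case as stated.

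The paper handles this dicritical case by an intersection-theoretic trick you did not anticipate: blow up both $q_0$ and $q_1$, take the strict transforms $L_0,L_1$ of two specific invariant lines and $L_2$ of $\{x_2=0\}$, and observe that after blow-up the only singularities of $\tilde\G$ on $L_0$ (resp.\ $L_1$) are the reduced corners. Hence $\tilde C\cdot L_0=\tilde C\cdot L_1=0$, which forces $\tilde C\in\Z L_2$ in $\Pic(M)$; but $L_2^2\le 0$ while $\tilde C^2>0$, a contradiction. This replaces the local separatrix analysis at the dicritical points by a global Picard-group constraint, and is the missing idea in your proposal.
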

\begin{proof} We have to consider two cases:

{\bf 1. First case}: $j_0=j_1=l_2$ (nondicrital case).
We extend $\F$ to a foliation  $\G$ on $\Proj^2$ which is induced by a 1-form 
$\omega$ equal to 
\[
(x_1^{l_2}-x_2^{l_2} ) x_0^{l_2-1} x_2 dx_0-a
(x_0^{l_2}-x_2^{l_2}) x_1^{l_2-1} x_2 dx_1 + (
a(x_0^{l_2}-x_2^{l_2}) x_1^{l_2} - (x_2^{l_2} - x_2^{l_2} )
x_0^{l_2} ) dx_2 \, .
\]
Thus
$\deg(\G)=2l_2-1$, 
and $\{x_2=0\}$, $\{x_1^{l_2}-x_2^{l_2}=0\}$,
$\{x_0^{l_2}-x_2^{l_2}=0\}$ are $\G$-invariants.
Notice that the singularities of $\G$ on
$\{x_1^{l_2}-x_2^{l_2}=0\}\cap\{x_0^{l_2}-x_2^{l_2}=0\}$ are
reduced.  Also, over each of these lines, $\G$ has only one extra
singularity corresponding to the intersection of the line with  $\{
x_2 = 0 \}$.

Suppose that there exists an algebraic curve $C$ invariant by $\G$
passing through $[0:0:1]$. B\'ezout's Theorem implies that $C$ must
intersect the line $\{x_1-x_2=0\}$. Since the singularities of $\G$
on this line outside of $\{x_2=0\}$ are all reduced and possess two
separatrices which do not pass through $[0:0:1]$, we conclude that $C$
intersects this line only at the point $[1:0:0]$. 
Let $\pi:M\rightarrow \Proj^2$ be the blow-up of $\mathbb P^2$ at the
point $[1:0:0]$, $E$ be the exceptional divisor, $\tilde{C}$ be the
strict transform of $C$, $L_2$ be the strict transform of
$\{x_2=0\}$, $F$ be the strict transform of
$\{x_1^{l_2}-x_2^{l_2}=0\}$, and $\tilde{\G}=\pi^*(\G)$ be the pullback
foliation on $M$. Then we see that the singularity at the point $[1:0:0]$ is
nondicritical, the singularities of $\tilde{\G}$ on $E$ are all
reduced and are contained in the intersection of $E$ with $L_2\cup F$. This 
claim contradicts the fact that $\tilde{C}\cap E$ is contained in $\sing(\tilde\G)$. 
This ends the first case. 

\smallskip
{\bf 2. Second case}: $l_2>j_0,j_1$ (dicritical case). 
 Again, we extend $\F$ to a foliation on $\Proj^2$ that
is denoted by $\G$. Assume, without loss of generality, that $j_0\ge j_1$. In 
this case the foliation $\G$ is induced by
\[
\omega =(x_1^{l_2}-x_2^{l_2} )x_0^{j_0-1} x_2 dx_0 -a
(x_0^{l_2}-x_2^{l_2} ) x_1^{j_1-1} x_2^{j_0-j_1+1} dx_1 + f dx_2 ,\,
\]
where $f(x_0,x_1,x_2)=a ( x_0^{l_2}-x_2^{l_2} ) x_1^{j_1}
x_2^{j_0-j_1} - ( x_1^{l_2} - x_2^{l_2} x_0^{j_0} )$. Thus
$\deg(\G)=l_2+j_0-1$, 
and $\{x_2=0\}$, $\{x_1^{l_2}-x_2^{l_2}=0\}$,
$\{x_0^{l_2}-x_2^{l_2}=0\}$ are $\G$-invariants.

Notice that the singularities of $\G$ on
$\{x_1^{l_2}-x_2^{l_2}=0\}\cap\{x_0^{l_2}-x_2^{l_2}=0\}$ are
reduced.  Also, over each of these lines, $\G$ has only one extra
singularity corresponding to the intersection of the line with  $\{
x_2 = 0 \}$.

Suppose that there exists an algebraic curve $C$ invariant by $\G$
passing through $[0:0:1]$. Let $\pi:M\rightarrow \Proj^2$ be the 
blow-up of $\mathbb P^2$ at
the points $[0:1:0]$ and $[1:0:0]$, $E=E_1\cup E_2$ be the
exceptional divisor, $\tilde{C}$ be the strict transform of $C$,
$L_2$ be the strict transform of $\{x_2=0\}$, $L_0$ be the strict
transform of $\{x_1-x_2=0\}$, $L_1$ be the strict transform of
$\{x_0-x_2=0\}$, and $\tilde{\G}=\pi^*(\G)$ be the pullback foliation on $M$.
Then we have the singularities at the points $[1:0:0]$ and
$[0:1:0]$ are dicritical and the singularities of $\tilde{\G}$ on $E$
do not belong to the lines $L_1$ and $L_2$.

Now we define the following map
\begin{align*}
\phi:Pic(M)&\rightarrow \Z^2\\
D\quad&\mapsto(D.L_0,D.L_1).
\end{align*}
Observe that $\phi$ is a surjective map and $\ker(\phi)=\Z L_2.$
By the previous discussion we see that $\tilde{C}\in \ker(\phi)$, hence we  
can write $\tilde{C}=bL_2$ in $Pic(M)$ for some $b\in\Z$. This is a
contradiction, since $\tilde{C}$ has positive self-intersection.  \end{proof}

\begin{cor}\label{C:particularcase} For all $d\geq l_2l_1+l_2l_0+l_2$, all $n\in\N$ and
 every $i=0,1,2$, we have
$$\Co^{p_i}_n(d)\neq \Fo.$$
Furthermore, if $l_0=l_1=1$ and $l_2\geq 2$, then $\Co^{p_2}_n(d)\neq \Fo$,
for all $d\geq 2l_2+1$ and all $n\in\N $.
\end{cor}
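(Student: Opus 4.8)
The plan is to reduce the corollary to the foliation $\F$ of Lemma~\ref{L:nondicriticalanddicrital} by the same kind of multiplication-by-monomial construction used in Corollary~\ref{C:generalcase}. Fix $i\in\{0,1,2\}$; we want a foliation of normal degree $d$ on $\Proj$ through whose point $p_i$ no invariant algebraic curve passes. Since the definition of $\Co^{p_i}_n(d)$ is symmetric in the three coordinate points and the roles of $l_0,l_1,l_2$ can be permuted in the construction, it suffices to produce, for each $i$, a single foliation $\F\in\Fo$ with no $\F$-invariant curve through $p_i$; then $\F\notin\Co^{p_i}_n(d)$ for every $n$, giving $\Co^{p_i}_n(d)\neq\Fo$.

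First I would realize $\eta$ as a global twisted $1$-form on $\Proj$. The $1$-form $\eta=(x_1^{l_2}-1)x_0^{j_0-1}dx_0-a(x_0^{l_2}-1)x_1^{j_1-1}dx_1$ is written in the affine chart $U_2\simeq\C^2/\mu_{l_2}$ with coordinates adapted to $p_2=[0:0:1]$; the congruence $l_0 j_0\equiv l_1 j_1\pmod{l_2}$ is exactly the condition that makes $\eta$ invariant under the $\mu_{l_2}$-action, so by the discussion following equation~(\ref{E:decomposition}) it descends to a quasi-homogeneous $1$-form on $U_2$ and hence extends to a section of $\Omega^{[1]}_{\Proj}(d_0)$ for the appropriate minimal normal degree $d_0$. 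By Lemma~\ref{L:nondicriticalanddicrital} the resulting foliation has no invariant algebraic curve through $p_2$. To reach an arbitrary $d\geq l_2l_1+l_2l_0+l_2$, I would multiply $\eta$ by a monomial $x_0^{p}x_1^{q}$ with $p,q\geq 0$; multiplying by a polynomial does not create any new invariant curve through $p_2$ (the saturation of $x_0^p x_1^q\eta$ still has $\eta$'s foliation, whose separatrix data at $p_2$ is unchanged), so the conclusion persists. The bound $d\geq l_2l_1+l_2l_0+l_2$ should be precisely what Lemma~\ref{L:sylvester} needs in order to solve $pl_j+ql_k=d-d_0$ (possibly with one exponent allowed to be zero) in nonnegative integers, after accounting for the degree $d_0$ of $\eta$.

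For the refined statement with $l_0=l_1=1$ and $l_2=k\geq 2$, I would argue more directly using the Riccati/resolution picture of Subsection~\ref{S:Hirzebruch} rather than through $\eta$, since there the threshold drops to $2l_2+1$. The point is that for $d\geq 2k+1$ a generic foliation is no longer forced by Proposition~\ref{P:Weighted special} to be Riccati, and Corollary~\ref{C:Riccati} shows that an invariant line through $p_2$ corresponds exactly to the algebraic multiplicity $r=d-k$; choosing a foliation whose lift to $\mb F_k$ has $E$ non-invariant with the generic local behaviour at the points of $E$ obstructs any invariant curve through $p_2$. Concretely I would exhibit one $\omega$ of normal degree $d$ whose pullback $\G=\pi^*\F$ to $\mb F_k$ leaves $E$ non-invariant and has only reduced singularities on $E$ whose separatrices miss the fiber structure, so that a Bézout/blow-up argument identical in spirit to the dicritical case of Lemma~\ref{L:nondicriticalanddicrital} forbids a curve through $p_2$.

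The main obstacle I expect is the bookkeeping that guarantees $x_0^p x_1^q\eta$ lands in $H^0(\Proj,\Omega^{[1]}_{\Proj}(d))$ for \emph{every} $d$ above the stated threshold and for each target point $p_i$: one must track the exact degree $d_0$ of the extended $\eta$ (which depends on whether one is in the dicritical case $l_2>j_0,j_1$ or the nondicritical case $j_0=j_1=l_2$, and which coordinate point is targeted), and then verify that $d-d_0$ lies in the numerical semigroup generated by the two relevant weights, which is where the Sylvester bound $g(l_j,l_k)=l_jl_k-l_j-l_k$ enters. Once the numerical reduction to Lemma~\ref{L:sylvester} is set up cleanly, the non-existence of separatrices is inherited verbatim from Lemma~\ref{L:nondicriticalanddicrital}, and the sharper $l_0=l_1=1$ bound follows from the independent Riccati computation.
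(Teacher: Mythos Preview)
Your overall plan---extend the affine $\eta$ of Lemma~\ref{L:nondicriticalanddicrital} to a global form and then scale by a monomial to reach normal degree $d$---is the paper's plan, but the execution has a genuine error and a missing ingredient.

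The error is the choice of monomial. You propose multiplying by $x_0^{p}x_1^{q}$; but for the (non-saturated) foliation $[x_0^{p}x_1^{q}\eta]\in\Fo$ the hypersurface $\{x_0=0\}$ (respectively $\{x_1=0\}$) \emph{is} invariant whenever $p\geq 1$ (respectively $q\geq 1$): taking $F=x_0$ in equation~(\ref{E:invariant}) gives $x_0\eta\wedge dx_0 - x_0(\eta\wedge dx_0)=0$. Both curves pass through $p_2=[0:0:1]$, so your foliation lies in $\Co^{p_2}_{l_0}(d)\cup\Co^{p_2}_{l_1}(d)$ rather than outside every $\Co^{p_2}_{n}(d)$. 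The saturation remark does not help, because $\Co^{p_i}_{n}(d)$ is defined on all of $\Fo$, not only on saturated foliations. The paper's fix is to multiply instead by a power of $x_2$, whose zero set $\{x_2=0\}$ avoids $p_2$.

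Once the multiplier is $x_2^{m}$, the degree moves only in steps of $l_2$, so Sylvester's Lemma is not the relevant arithmetic. One must first \emph{choose} $j_0,j_1$ (and hence $\eta$) depending on $d$: take the unique $1\leq j_0,j_1\leq l_2$ with $d\equiv l_0j_0\equiv l_1j_1\pmod{l_2}$, which automatically satisfies the congruence required in Lemma~\ref{L:nondicriticalanddicrital}. The extension $\hat\F$ then has normal degree at most $l_0l_2+l_1l_2+l_2$ and congruent to $d$ modulo $l_2$, so for $d\geq l_0l_2+l_1l_2+l_2$ a nonnegative power of $x_2$ hits $d$ exactly. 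The same mechanism, not a separate Riccati argument, gives the sharp bound when $l_0=l_1=1$: then necessarily $j_0=j_1=j$ and $\deg(N\hat\F)=2l_2+j$, so every $d\geq 2l_2+1$ is reached by a suitable power of $x_2$.
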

\begin{proof} We just show that $\Co^{p_2}_n(d)\neq \Fo$, and similar
arguments can be used for the other cases. 
Take $d\geq l_2l_1+l_2l_0+l_2$, let $j_0$, $j_1$ be the unique
integers satisfying $1\leq j_0,j_1\leq l_2$ and $d\equiv
l_0j_0\equiv l_1j_1 \pmod {l_2}$. Then Lemma \ref{L:nondicriticalanddicrital} and 
implies that the the foliation $\F$ on $\C^2$ given by
$$\eta=(x_1^{l_2}-1)x_0^{j_0-1}dx_0-a (x_0^{l_2}-1)x_1^{j_1-1}dx_1,$$
does not have any $\F$-invariant algebraic curve passing through the
point $(0,0)$.

We can extend $\F$ to a foliation
$\hat{\F}$ on $\Proj$, which is induced by $\theta$ and
\[
\deg(N\hat{\F})=\left\{
\begin{array}{cc}
l_0j_0+l_1l_2+l_2&\mbox{, if $l_0j_0\geq l_1j_1$,} \\
l_1j_1+l_0l_2+l_2&\mbox{, if $l_1j_1>l_0j_0$.}
\end{array}
\right.
\]
Since $d\equiv \deg(N \hat{\F}) \mod l_2$ and $d\geq
l_2l_1+l_2l_0+l_2$, we can multiply the 1-form $\theta$ by an
adequate power of $x_2$ and construct a foliation $\Ho$ on
$\Proj$ with normal $\Q$-bundle of degree $d$. The foliation
$\Ho$ does not have any $\Ho$-invariant algebraic curve passing
through the point $[0:0:1]$. Hence $\Ho\notin \Co^{p_2}_n(d)$.

If $l_0=l_1=1$ and $d\geq 2l_2+1$, then $d\equiv j \pmod {l_2}$, for
a unique integer  $1\leq j \leq l_2$. Taking $j=j_0=j_1$, the
foliation $\hat{\F}$ constructed above satisfies
$\deg(N\hat{\F})=2l_2+j$. By a same procedure we can construct a
foliation $\Ho\notin \Co^{p_2}_n(d)$ for all $n\in \N$.
\end{proof}

\subsection{Proof of Theorem \ref{T:Weighted general}}
By Lemma \ref{L:closedset}, 
$\Co_n(d)$ is an algebraic closed subset of $\Fo$ for all
$n\in\, \N$. We claim that if $d\geq l_0l_1l_2+l_0l_1+2l_2$, then
$\Co_n(d)\neq \Fo$, for all $n\in\,\N$. In fact, if we had
$\Co_n(d)=\Fo$ for some $n\in\,\N$, then we could apply
Proposition \ref{P:setcontradiction} and this would contradict Corollary
\ref{C:generalcase} and Corollary \ref{C:particularcase}. 
By applying Baire's Theorem we finish the proof. \qed

\begin{cor} Under the assumptions of Theorem \ref{T:Weighted general}, 
a generic $G$-invariant foliation of degree $d$ on $\Proj^2$ does not admit any invariant algebraic curve if
$d\geq l_0l_1l_2+l_0 l_1+ 2l_2-2$.

\end{cor}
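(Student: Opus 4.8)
The plan is to derive the corollary from Theorem \ref{T:Weighted general} by transporting its genericity statement through the isomorphism $\varphi^*\colon \Fol_{d+2}\to\Foliation[2]{d}^{G}$ of Lemma \ref{L:isomorphism}. A $G$-invariant foliation $\G$ of degree $d$ on $\Proj^2$ is, by that lemma, of the form $\G=\varphi^*\F$ for a unique foliation $\F\in\Fol_{d+2}$ of normal degree $d+2$ on $\Proj$, and the degree bookkeeping fits exactly: the hypothesis $d\ge l_0l_1l_2+l_0l_1+2l_2-2$ is equivalent to $d+2\ge l_0l_1l_2+l_0l_1+2l_2$, which is precisely the hypothesis of Theorem \ref{T:Weighted general} for $\F$. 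Hence it suffices to establish that $\F$ admits an invariant algebraic curve on $\Proj$ if and only if $\G=\varphi^*\F$ admits one on $\Proj^2$; granting this, the locus of $G$-invariant foliations carrying an invariant curve is the image under the isomorphism $\varphi^*$ of $\bigcup_n\Co_n(d+2)$, and the conclusion will follow.

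The key step is this curve equivalence. I would work with the quotient map $\overline\varphi\colon\Proj^2\to\Proj$, $[x_0:x_1:x_2]\mapsto[x_0^{l_0}:x_1^{l_1}:x_2^{l_2}]$, which exhibits $\Proj$ as $\Proj^2/G$ and satisfies $\G=\overline\varphi^*\F$. The forward implication is immediate: if $C\subset\Proj$ is $\F$-invariant, then $\overline\varphi^{-1}(C)$ is an algebraic curve invariant by $\overline\varphi^*\F=\G$. For the converse, suppose $\G$ has an invariant algebraic curve $\tilde C\subset\Proj^2$. Since $\G$ is $G$-invariant, each translate $g\cdot\tilde C$ ($g\in G$) is $\G$-invariant, so by the remark following \eqref{E:invariant} the $G$-invariant curve $D'=\bigcup_{g\in G}g\cdot\tilde C$ is $\G$-invariant. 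Being $G$-invariant, $D'$ descends to a curve $C=\overline\varphi(D')\subset\Proj$ with $D'=\overline\varphi^{-1}(C)$. Because the $G$-action is free on the open torus $\{x_0x_1x_2\neq 0\}$, the map $\overline\varphi$ is étale there, and the tangency of $D'$ to $\G=\overline\varphi^*\F$ transfers to tangency of $C$ to $\F$ on this dense open set; since invariance is governed by the closed algebraic condition \eqref{E:invariant}, it then holds on all of $C$.

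With the equivalence in hand, Theorem \ref{T:Weighted general} applied in normal degree $d+2$ gives $\Co_n(d+2)\neq\Fol_{d+2}$ for every $n$, so $\bigcup_n\Co_n(d+2)$ is a countable union of proper closed subsets of $\Fol_{d+2}$ (each closed by Lemma \ref{L:closedset}). Transporting through the isomorphism $\varphi^*$ and invoking the equivalence, the $G$-invariant foliations of degree $d$ on $\Proj^2$ admitting an invariant algebraic curve form a countable union of proper closed subsets of $\Foliation[2]{d}^{G}$; Baire's theorem then produces a dense complement of foliations with no invariant curve, which is exactly the assertion.

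The step I expect to be the main obstacle is the converse direction of the curve equivalence, namely making the descent of $D'$ to a genuine $\F$-invariant curve clean along the branch locus of $\overline\varphi$ (the coordinate lines $\{x_i=0\}$ and the singular points of $\Proj$), where the action is not free and $\overline\varphi$ is ramified. Away from this codimension-one locus the étale transfer above is transparent; the point to verify carefully is that invariance, being the closed condition \eqref{E:invariant}, extends across the ramification divisor, so that passing to the $G$-orbit $D'$ and then descending indeed yields an honest $\F$-invariant curve on $\Proj$.
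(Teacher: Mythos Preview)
Your approach is exactly the paper's: the one-line proof there reads ``It follows from Theorem \ref{T:Weighted general} and Lemma \ref{L:isomorphism},'' and you carry out precisely this transport through $\varphi^*$ with the correct degree shift $d\mapsto d+2$. The curve-correspondence argument you spell out (pull back in one direction; take the $G$-orbit, descend, and use closedness of \eqref{E:invariant} across the ramification locus in the other) is a detail the paper leaves implicit, and your treatment of it is sound.
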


\begin{proof} It follows from Theorem \ref{T:Weighted general} and Lemma \ref{L:isomorphism}.
\end{proof}

In general, for $l_0, l_1, l_2$ positive integers not necessarily pairwise coprimes, a reduction method 
is known to obtain pairwise coprimes as follows:

Let $d_0=\mdc(l_1,l_2)$, $d_1=\mdc(l_0,l_2)$, $d_2=\mdc(l_0,l_1)$, 
$a_0=\lcm(d_1,d_2)$, $a_1=\lcm(d_0,d_2)$, $a_2=\lcm(d_0,d_1)$ and $a=\lcm(a_0,a_1,a_2)$. 
Then $l'_i=l_i/a_i$, $i=0,1,2$ are pairwise coprimes. 
Set $\Proj:=\Proj(l_0,l_1,l_2)$ and $\Proj'=\Proj(l'_0,l'_1,l'_2)$. 
The map 
\begin{align*}
\Psi: \Proj&\rightarrow \Proj'\\
\,[x_0:x_1:x_2] \quad &\mapsto [x_0^{d_0}:x_1^{d_1}:x_2^{d_2}],
\end{align*}
is an isomorphism, see \cite[Subsection 1.3]{Dolgachev}. This map induces an isomorphism 
\[
\Psi^*:
\Proj H^0(\Proj',\Omega^{[1]}_{\Proj'}(d))\to
\Proj H^0(\Proj,\Omega^{[1]}_{\Proj}(ad)).
\]

We note that in the case $a>1$, that is, $l_0, l_1, l_2$ are not pairwise coprimes,   
if the normal degree of a saturated foliation 
$\F$ on $\Proj(l_0,l_1,l_2)$ is not a multiple of $a$, then $\F$ has an invariant curve, see 
\cite[Proposition 2.3.11]{Lizarbe14} for more details. From these observations and using Theorem 
\ref{T:Weighted general} we can obtain the following.  
\begin{cor} Under the conditions stated above,  
a generic invariant foliation with normal $\Q$-bundle of degree $ad$ on $\Proj(l_0,l_1,l_2)$ does not 
admit any invariant algebraic curve if 
\[
d\geq \frac{l_0l_1l_2}{a_0a_1a_2}+\frac{l_0 l_1}{a_0a_1}+ \frac{l_0l_2}{a_0a_2}+\frac{l_1l_2}{a_1a_2}.
\]
\end{cor}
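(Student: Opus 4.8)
The plan is to deduce the corollary from Theorem~\ref{T:Weighted general} by transporting the problem through the isomorphism $\Psi\colon\Proj\to\Proj'$, where $\Proj'=\Proj(l'_0,l'_1,l'_2)$ carries the \emph{pairwise coprime} weights $l'_i=l_i/a_i$. First I would use the induced isomorphism
\[
\Psi^*\colon \Proj H^0(\Proj',\Omega^{[1]}_{\Proj'}(d))\to \Proj H^0(\Proj,\Omega^{[1]}_{\Proj}(ad))
\]
to identify the space of foliations of normal $\Q$-bundle degree $ad$ on $\Proj$ with the space of foliations of normal degree $d$ on $\Proj'$. Because $\Psi^*$ is an isomorphism of projective spaces, it carries a countable union of proper Zariski-closed subsets to a countable union of proper Zariski-closed subsets; hence a property holding for a generic foliation on $\Proj'$ holds for a generic foliation on $\Proj$, and conversely. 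In particular genericity (and saturatedness, since $\Psi$ matches singular loci) is preserved in both directions.

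Next I would record that, since $\Psi$ is an isomorphism of varieties, it induces a bijection between algebraic curves on $\Proj$ and on $\Proj'$ which preserves invariance: if $\F=\Psi^*\F'$ and $C\subset\Proj$, then $C$ is $\F$-invariant if and only if $\Psi(C)$ is $\F'$-invariant, by functoriality of the pullback of reflexive $1$-forms under an isomorphism. Consequently $\F$ admits an invariant algebraic curve on $\Proj$ precisely when $\F'$ admits one on $\Proj'$, and the whole statement reduces to the assertion that a generic foliation $\F'$ of normal degree $d$ on $\Proj'$ admits no invariant algebraic curve.

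Finally I would invoke Theorem~\ref{T:Weighted general} on $\Proj'$. Relabelling the pairwise coprime weights $l'_0,l'_1,l'_2$ in increasing order as $m_0\le m_1\le m_2$, the theorem yields the desired conclusion as soon as $d\ge m_0m_1m_2+m_0m_1+2m_2$. It then remains to check that the corollary's hypothesis, which upon substituting $l'_i=l_i/a_i$ becomes
\[
d\ge l'_0l'_1l'_2+l'_0l'_1+l'_0l'_2+l'_1l'_2,
\]
implies this bound. The right-hand side is symmetric in the three weights, so it equals $m_0m_1m_2+m_0m_1+m_0m_2+m_1m_2$, and its difference with the theorem's bound is $m_2(m_0+m_1-2)\ge 0$ since $m_0,m_1\ge1$. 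Thus the corollary's hypothesis is at least as strong as the theorem's, and the conclusion follows.

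I expect no serious obstacle: the genuine content lies entirely in Theorem~\ref{T:Weighted general}, and this corollary is essentially a translation plus an elementary inequality. The two points that must be verified with care are (i) that $\Psi^*$ really multiplies the normal degree by $a$, which is exactly the arithmetic identity $d_ia_i=a$ underlying the isomorphism of \cite[Subsection 1.3]{Dolgachev}, and (ii) the inequality $m_2(m_0+m_1-2)\ge0$; the only mild bookkeeping issue, the ordering of the weights, is absorbed by the symmetry of the corollary's bound.
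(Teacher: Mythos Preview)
Your proposal is correct and follows exactly the approach the paper indicates: transport the problem through the isomorphism $\Psi^*$ to the pairwise coprime weighted plane $\Proj'=\Proj(l'_0,l'_1,l'_2)$ and invoke Theorem~\ref{T:Weighted general} there. The paper itself gives no further details beyond ``From these observations and using Theorem~\ref{T:Weighted general} we can obtain the following,'' so your write-up in fact supplies the missing verification---in particular the symmetric-bound inequality $m_2(m_0+m_1-2)\ge 0$---that the paper leaves implicit.
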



In the next subsection, we will construct counterexamples to prove Theorem
\ref{T:Weighted special}.

\subsection{Existence of singularities without algebraic separatrix on $\Proj^2_\p$}
The following family of the examples allows us to obtain the bound of
Theorem \ref{T:Weighted special}.

\bigskip

Let $\F_1$ be the foliation on $\Proj^2_\p$, $\p>1$, induced
by the following 1-form
\[
\delta=-\p x_2(x_2-x_0x_1^{\p-1})dx_0+\p x_0x_2(x_1^{\p-1}-
x_0x_1^{\p-2})dx_1+x_0(x_2-x_1^{\p})dx_2.
\] 
Notice that
$\deg(\NFone)=2\p+1$, $\sing(\F_1)=\{[0:1:0],\,[1:0:0],\,[1:1:1]\}$
and $\{x_0=0\}\cap \sing(\F_1)=\{[0:1:0]\}$.
\begin{lemma}\label{L:excepcionalcase}
 The foliation $\F_1$ does not have any invariant algebraic curve
 passing through the point $[1:1:1]$ .
\end{lemma}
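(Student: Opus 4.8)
The plan is to show that any algebraic curve $C$ through $[1:1:1]$ would force a contradiction after passing to a desingularization and tracking the invariant curves of the extended foliation. The strategy mirrors the proof of Lemma \ref{L:nondicriticalanddicrital}: the point $[1:1:1]$ should be a singularity whose separatrices are irrational (the role of the parameter $a\notin\R$ there is played here by a resonance/eigenvalue condition), so that no algebraic curve can be locally invariant through it. First I would work in the affine chart $U_2\simeq\C^2/\mu_k$, lift $\F_1$ to $\C^2$, and compute the linear part of the lifted foliation at the point lying over $[1:1:1]$. The key local computation is to verify that the two eigenvalues of the linearization at this singularity have an irrational (non-real) ratio, which guarantees the singularity is linearizable with exactly two smooth separatrices, neither of which is algebraic.

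Next I would globalize. I would extend $\F_1$ across $\Proj^2_k$ and identify all its invariant curves away from $[1:1:1]$; the form $\delta$ already exhibits $\{x_0=0\}$ and the relation with $x_2-x_1^k$ as natural candidate invariant loci, so I would record precisely which coordinate lines and which curve $\{x_2=x_1^k\}$ are $\F_1$-invariant and locate the remaining singularities $[0:1:0]$ and $[1:0:0]$ on them. Using the minimal resolution $\pi:\mb F_k\to\Proj^2_k$ of the quotient singularity $p_2$, together with Proposition \ref{P:Riccati}, I would pull back $\F_1$ to $\G=\pi^*\F_1$ on the Hirzebruch surface and pull back the hypothetical curve $C$ to its strict transform $\tilde C$, so that the entire argument takes place on a smooth surface where intersection theory and the index theorems of \cite{Brun1} apply cleanly.

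On $\mb F_k$ the plan is to derive a contradiction from self-intersection, exactly as in the dicritical branch of Lemma \ref{L:nondicriticalanddicrital}. Concretely, I would choose two invariant curves of $\G$ whose classes span a rank-two sublattice of $\Pic(\mb F_k)$, define the intersection-pairing map $\phi:\Pic(\mb F_k)\to\Z^2$ sending a divisor $D$ to its intersection numbers with those two curves, and argue that because $\tilde C$ cannot meet either chosen curve (any intersection point would be a singularity of $\G$ forcing $\tilde C$ to pass through one of the reduced singularities whose separatrices miss the fiber over $[1:1:1]$), the class of $\tilde C$ must lie in $\ker\phi$. Since $\ker\phi$ is generated by a fiber class of self-intersection zero, I would conclude $\tilde C^2\le 0$, contradicting the fact that $\tilde C$, being the strict transform of a curve not contracted by $\pi$, has positive self-intersection.

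The main obstacle will be the local analysis at $[1:1:1]$: I must pin down the eigenvalue ratio of the linearization and confirm it is non-real (or at least irrational and non-resonant), since everything downstream rests on that single singularity having no algebraic separatrix. The computation of $\delta$'s linear part in the orbifold chart, accounting for the $\mu_k$-action and the twist by $x_2-x_1^k$, is delicate and is where I expect the real work to be; once that eigenvalue condition is established, the global self-intersection argument is essentially a transcription of the earlier lemma. I would therefore carry out the linearization first and in full, and only afterward assemble the global contradiction.
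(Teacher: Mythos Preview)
Your plan has a genuine gap in the local step, and it also misses the much simpler argument the paper actually uses.

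\textbf{The local error.} Knowing that the eigenvalue ratio at $[1:1:1]$ is irrational (or even non-real) does \emph{not} imply that the separatrices there are non-algebraic. An irrational ratio only tells you the germ is linearizable with exactly two smooth separatrices; whether those separatrices extend to algebraic curves is a global question that the eigenvalues cannot decide. Worse, if you carry out the computation you propose (in the chart $U_0$, with $u=x_1$, $v=x_2$, linearizing at $(1,1)$), the characteristic polynomial of the linear part is $\lambda^2+k\lambda+k=0$, with discriminant $k^2-4k$. For $k\ge 5$ the eigenvalues are real and negative, so $[1:1:1]$ is a node in the Poincar\'e domain with two genuine analytic separatrices; nothing local rules out their algebraicity. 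For $k=4$ you get a resonant node with a single separatrix. Only for $k=2,3$ are the eigenvalues non-real, which is the one case where your intended local argument would go through. Since the lemma is stated for all $k\ge 2$, the local step fails as written, and your global self-intersection argument, which leans on that local input to control where $\tilde C$ can meet the chosen invariant curves, inherits the gap.

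\textbf{What the paper does instead.} The paper never analyzes $[1:1:1]$ locally. It observes that $\{x_0=0\}$ and $\{x_2=0\}$ are $\F_1$-invariant, and that the only singularity of $\F_1$ on $\{x_0=0\}$ is $[0:1:0]$, which is a \emph{saddle-node} whose two separatrices are precisely these two lines. Now suppose $C$ is an invariant curve through $[1:1:1]$. By B\'ezout on $\Proj^2_k$, $C$ meets $\{x_0=0\}$; since both are invariant, the intersection occurs at a singularity, hence at $[0:1:0]$. But the separatrices there are $\{x_0=0\}$ and $\{x_2=0\}$, so $C$ must be one of them, contradicting $[1:1:1]\in C$. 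The whole proof is four lines. The moral: the singularity to analyze is $[0:1:0]$, not $[1:1:1]$.
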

\begin{proof} Observe that the lines $\{x_0=0\}$ and $\{x_2=0\}$ are
$\F_1$-invariant. Suppose that there exists an algebraic curve $C$
invariant by $\F_1$ passing through $[1:1:1]$. Since $\{x_0=0\}\cap
\sing(\F_1)=\{[0:1:0]\}$, using B\'ezout's Theorem for weighted
projective planes \cite[Proposition 8.2]{Artal} 
we conclude that $\{x_0=0\}$ only intersects $C$
at the point $[0:1:0]$. 
Note that $[0:1:0]$ is a saddle-node singularity with only two
separatrices $\{x_0=0\}$ and $\{x_2=0\}$. In particular, $C$ must be one of these lines, 
contradicting $[1:1:1] \in C$. 
\end{proof}


\subsection{Proof of Theorem \ref{T:Weighted special}}

It is enough to show that $\Co_n(d)\neq \Fo$, for all $n\in\,\N$ and all
$d\geq 2\p+1$. 
Reasoning by contradiction,
suppose that this does not hold. Then Proposition \ref{P:setcontradiction} and
Corollary \ref{C:particularcase} imply $\Si_{X}(d)\subset \Dl_n(d)$, 
that contradicts Corollary \ref{C:generalcase}, for $d>2\p+1$, and Lemma
\ref{L:excepcionalcase}, for $d=2\p+1$. 
\qed

\begin{remark} In contrast with Theorem \ref{T:Weighted general}, in Theorem \ref{T:Weighted special} 
we obtain the best possible bound $d\geq 2\p+1$. 
\end{remark}

\section{Holomorphic foliations on Hirzebruch surfaces}\label{Hirzebruch}

Let $\mb F_{\p}=\Proj(\Ol_{\Proj^1}\oplus\Ol_{\Proj^1}(\p))$ be the 
Hirzebruch surface, $\pi:\mb F_\p \rightarrow \Proj^2_\p$ be
the minimal resolution of $\Proj^2_k$, $E$ be
the exceptional divisor and $F$ be the strict transform of a line passing through $p_2$, 
see Section \ref{S:Hirzebruch}. Observe that $\Pic(\mb F_\p)=\Z F\oplus \Z
E,$ in which $E\cdot E=-\p$, $F\cdot F=0$ and $F\cdot E=1$.

We denote by $\Ro(a,b):=\Proj
H^0(\mb F_\p,\Omega^1_{\mb F_\p}\otimes\Ol_{\mb F_\p}(aF+bE))$ the
space of holomorphic foliations with normal bundle of bidegree
$(a,b)$ on $\mb F_\p$.
\begin{remark} 
Let $\F$ be a foliation with normal $\Q$-bundle of degree $d$ on
$\Proj^2_\p$, $r$ is the algebraic multiplicity of $\F$ at $p_2$ and $\G=\pi^*\F$ 
be the foliation on $\mb F_{\p}$. Recall that $\NG=\Ol_{\mb F_{\p}}\left(dF+\left(\frac{d-e}{\p}\right)E\right),$ 
where
\[
e=\left\{\begin{array}{cl}
     r-\p& \mbox{, if $E$ is $\G$-invariant,}\\
     r&\mbox{, if $E$ is not $\G$-invariant.}
    \end{array}\right.
\]    
\end{remark}

We shall now prove that under suitable conditions on $(a,b)$ 
each foliation with bidegree $(a,b)$ on $\mb F_\p$ has algebraic solutions.

\begin{prop}\label{P:Hircurve} If $a<b\p+2$ or $b<3$, then any foliation $\G\in \Ro(a,b)$ admits
some invariant algebraic curve.
\end{prop}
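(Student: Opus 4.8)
The plan is to reduce the statement about foliations on the Hirzebruch surface $\mb F_\p$ to the already-understood picture on the weighted projective plane $\Proj^2_\p = \Proj(1,1,\p)$, via the minimal resolution $\pi:\mb F_\p\rightarrow \Proj^2_\p$. The key translation is the one recorded in Proposition \ref{P:Riccati} and the remark preceding this proposition: a foliation $\G$ on $\mb F_\p$ with normal bundle $\Ol_{\mb F_\p}(aF+bE)$ corresponds, under $\pi_*$, to a foliation $\F$ on $\Proj^2_\p$ whose normal $\Q$-bundle has degree determined by the bidegree $(a,b)$. First I would solve for this degree: writing $\NG = dF + \left(\frac{d-e}{\p}\right)E$ and matching against $aF+bE$ gives $a = d$ and $b = \frac{d-e}{\p}$, so $d=a$ and $e = a - b\p$. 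Since $e$ equals either $r$ or $r-\p$ according to whether $E$ is $\G$-invariant, this pins down the algebraic multiplicity $r$ of the corresponding foliation $\F$ on $\Proj^2_\p$ at the singular point $p_2$.

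With the dictionary in place, the strategy splits into the two hypotheses. In the range $b<3$, that is $b\in\{1,2\}$ (the case $b=0$ giving $\NG\cdot F = 0$, which one checks corresponds to a non-saturated or pencil-type situation), I would argue that the induced foliation $\F$ on $\Proj^2_\p$ has small normal degree, placing it in the regime already classified in Proposition \ref{P:Weighted special}, where every foliation of normal degree $d\leq 2\p$ on $\Proj^2_\p$ admits an invariant line. Pulling such an invariant line back under $\pi$ (its strict transform, together possibly with $E$) yields an invariant algebraic curve for $\G$. For the range $a<b\p+2$, I would translate the inequality through $d=a$: here the constraint forces $e=a-b\p < 2$, and by the ``Moreover'' clause of Proposition \ref{P:Riccati}, $\G$ fails to be a transversal (Riccati) foliation in a way that produces an invariant fiber or an invariant section. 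More precisely, $\NG\cdot F = (aF+bE)\cdot F = b\cdot 1 = b$, while $\NG\cdot E = (aF+bE)\cdot E = a - b\p$; when $a-b\p<2$ the intersection of $\NG$ with $E$ is too small, which by a Camacho–Sad or index computation along $E$ forces either $E$ itself to be $\G$-invariant or a fiber $F$ to be invariant, in either case supplying the desired algebraic curve.

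The main obstacle I anticipate is the careful bookkeeping at the boundary and degenerate cases, particularly disentangling whether $E$ is or is not $\G$-invariant, since the value of $e$ (and hence the correspondence with the degree $d$ on $\Proj^2_\p$) depends on this dichotomy. I would handle this by treating the two subcases of $e$ separately and verifying in each that the resulting normal degree $d=a$ on $\Proj^2_\p$ genuinely falls into the low-degree range governed by Proposition \ref{P:Weighted special}, or alternatively that the index obstruction along $E$ is unavoidable. A secondary technical point is confirming that the invariant curve produced downstairs on $\Proj^2_\p$ does not disappear or become non-algebraic upon pulling back to $\mb F_\p$; this is benign because $\pi$ is a birational morphism of surfaces, so strict transforms of invariant curves remain invariant algebraic curves, and one only needs to note that the exceptional divisor $E$ is itself available as an invariant curve whenever the fiberwise index count demands it. Collecting these cases establishes that both hypotheses $a<b\p+2$ and $b<3$ force the existence of an invariant algebraic curve for every $\G\in\Ro(a,b)$.
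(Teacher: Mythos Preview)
Your reduction to $\Proj^2_\p$ has a genuine gap in the case $b<3$. You correctly work out the dictionary $d=a$ and $e=a-b\p$, but then assert that for $b\in\{1,2\}$ ``the induced foliation $\F$ on $\Proj^2_\p$ has small normal degree, placing it in the regime already classified in Proposition~\ref{P:Weighted special}.'' This is false: the normal degree downstairs is $d=a$, and nothing in the hypothesis $b<3$ bounds $a$. For instance $b=2$, $\p=3$, $a=1000$ satisfies $b<3$ and $a\ge b\p+2$, yet $d=1000\gg 2\p$, so Proposition~\ref{P:Weighted special} says nothing. The push-down strategy simply does not reach this case.

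The paper's argument stays entirely on $\mb F_\p$ and avoids the dictionary altogether. For $a<b\p+2$ it applies the tangency formula to $E$: if $E$ were not $\G$-invariant then $0\le \Tang(\G,E)=\NG\cdot E-\chi(E)=(a-b\p)-2<0$, a contradiction; hence $E$ is invariant. (Your ``index obstruction along $E$'' gestures at exactly this, so that half of your sketch is salvageable.) For $b<3$ with $a\ge b\p+2$, the paper first uses Baum--Bott, $\NG^2=b(2a-b\p)>0$, to produce a singular point $p$, and then applies the tangency formula to the fiber $F$ through $p$: if $F$ were not invariant then $0<\Tang(\G,F)=\NG\cdot F-\chi(F)=b-2\le 0$, again a contradiction. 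So the invariant curve in this range is a fiber of the ruling, found by intersection theory on $\mb F_\p$, not by descending to $\Proj^2_\p$.
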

\begin{proof} 
We first prove that $E$ is $\G$-invariant when $a<b\p+2$. Suppose it is not. 
Then, by the tangency formula,
$0\leq Tang(\G,E)=-b\p+a-2$. So, $a\geq b\p+2$, contradicting the hypothesis. 
We can thus assume that $a-b\p\geq 2$. If $b=0$ then the foliation $\G$ is the 
rational fibration, so we may also assume   
$0<b\leq 2$. Baum-Bott formula implies that $\sum
BB(\NG,p)=\NG^2=(aF+bE)^2=b(a+a-b\p)>0$, and therefore there exists a
point $p\in \sing(\G)$.  We claim that the fiber 
$F$ 
passing through the point $p$ 
is $\G$-invariant. If this is not the case, 
then 
the tangency formula implies 
\[
0<Tang(\G,F)=\NG.F-\chi(F)=b-2\leq 0,
\]
which is again a contradiction. This completes the proof of the proposition.
\end{proof}

Let $\chi\in \Q[t]$. Define two subsets of $\mb F_\p\times \Ro(a,b)$
by
\[
\Si(a,b)=\{(x,\G)\in \mb F_\p\times \Ro(a,b)|\,x \in \sing(\G)\},
\]
and
\begin{align*} 
\Dl_{\chi}(a,b)=\{
(x,\G)\in \mb F_\p \times \Ro(a,b)|
\, \mbox{ $x$ is in some subscheme,} & \mbox{ invariant by $\G$},
\\
&\mbox{of Hilbert
polynomial $\chi$}
\}.
\end{align*}
\begin{prop}\label{P:Hirreducibleset} The following statements hold.
\begin{enumerate}
 \item If $b\geq 2$ and $a \geq b\p+2$, then $\Si(a,b)$ is a closed irreducible variety
  of $\mb F_\p \times \Ro(a,b)$ and $\dim_{\C}\Si(a,b)=\dim_{\C}\Ro(a,b).$
 \item $\Dl_{\chi}(a,b)$ is a closed subset of $\mb F_\p\times \Ro(a,b)$.
\end{enumerate}
 \end{prop}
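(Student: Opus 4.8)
The plan for item (1) is to realize $\Si(a,b)$ as the projectivization of a vector subbundle over $\mb F_\p$, exactly as in the proof of Proposition \ref{P:irreducibleset}. Writing $W = H^0(\mb F_\p, \Omega^1_{\mb F_\p}\otimes\Ol_{\mb F_\p}(aF+bE))$, so that $\Ro(a,b)=\Proj(W)$, I would consider the projection $\pi_1\colon \Si(a,b)\to \mb F_\p$. For fixed $x$, the fiber $\pi_1^{-1}(x)$ consists of those $[\omega]$ with $\omega(x)=0$, which is the projectivization of the kernel of the evaluation map
\[
\mathrm{ev}_x\colon W \longrightarrow \Omega^1_{\mb F_\p}(aF+bE)\big|_x \cong \C^2 ;
\]
in particular it is a linear subspace of $\Ro(a,b)$. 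The whole argument reduces to showing that $\mathrm{ev}_x$ is surjective for every $x\in\mb F_\p$, that is, that $\Omega^1_{\mb F_\p}(aF+bE)$ is globally generated. Granting this, the evaluation maps assemble into a surjection of the trivial bundle $\mb F_\p\times W$ onto the rank-two bundle $\Omega^1_{\mb F_\p}(aF+bE)$; its kernel $K$ is a subbundle of rank $\dim W-2$, and $\Si(a,b)=\Proj(K)$ is a projective subbundle of $\mb F_\p\times\Proj(W)$. As the projectivization of a vector bundle over the smooth irreducible surface $\mb F_\p$, it is closed, irreducible, and of dimension $2+(\dim W-2)-1=\dim W-1=\dim_\C\Ro(a,b)$, as claimed.

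The heart of the matter is therefore the global generation of $\Omega^1_{\mb F_\p}(aF+bE)$ under the hypotheses $b\geq 2$ and $a\geq \p b+2$. To prove it I would twist the relative cotangent sequence of the ruling $p\colon \mb F_\p\to\Proj^1$,
\[
0\to \Ol_{\mb F_\p}((a-2)F+bE)\to \Omega^1_{\mb F_\p}(aF+bE)\to \Ol_{\mb F_\p}((a-\p)F+(b-2)E)\to 0,
\]
using $p^*\Omega^1_{\Proj^1}=\Ol_{\mb F_\p}(-2F)$ and $\omega_{\mb F_\p/\Proj^1}=K_{\mb F_\p}+2F=\Ol_{\mb F_\p}(-2E-\p F)$. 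Since the nef cone of $\mb F_\p$ is cut out by $\beta\geq 0$ and $\alpha-\p\beta\geq 0$ for a class $\alpha F+\beta E$, both the sub and the quotient line bundles are nef under our hypotheses, hence globally generated (nef line bundles on the toric surface $\mb F_\p$ are base-point-free). A short diagram chase on fibers then shows that the middle term is globally generated at every point, provided the restriction map $H^0(\Omega^1_{\mb F_\p}(aF+bE))\to H^0(\Ol_{\mb F_\p}((a-\p)F+(b-2)E))$ is surjective; this surjectivity follows from $H^1(\mb F_\p,\Ol_{\mb F_\p}((a-2)F+bE))=0$, which I would verify by induction on $b$ using the restriction sequence along $E$ (the relevant restrictions being $\Ol_{\Proj^1}(a-2-\p b')$ with $a-2-\p b'\geq -1$). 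This cohomological step is the main obstacle; alternatively one could mimic Proposition \ref{P:irreducibleset} literally, exhibiting two explicit sections whose values are linearly independent at a point of each of the two $\Aut(\mb F_\p)$-orbits, namely $E$ and its complement.

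For item (2) the natural tool is the Hilbert scheme. Let $H=\mathrm{Hilb}^{\chi}(\mb F_\p)$ be the projective Hilbert scheme of subschemes of $\mb F_\p$ with Hilbert polynomial $\chi$, with universal family $\Zl\subset \mb F_\p\times H$. I would first show that the incidence set
\[
\mathcal{W}=\{(Z,\G)\in H\times \Ro(a,b)\mid Z\text{ is }\G\text{-invariant}\}
\]
is closed: invariance means that the twisted vector field defining $\G$ is tangent to $Z$, i.e.\ that the section it induces in the normal sheaf of $Z$ vanishes, a condition that is algebraic and varies in families, so one argues exactly as in Lemma \ref{L:closedset} (adapting \cite[Proposition 1]{Jorge}). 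Then
\[
\mathcal{I}=(\Zl\times \Ro(a,b))\cap(\mb F_\p\times \mathcal{W})\subset \mb F_\p\times H\times \Ro(a,b)
\]
is closed, and $\Dl_{\chi}(a,b)$ is precisely its image under the projection that forgets the $H$-factor. Since $H$ is projective this projection is proper, so the image of the closed set $\mathcal{I}$ is closed, which gives the claim.
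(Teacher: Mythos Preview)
Your proof is correct and the overall architecture matches the paper's: realize $\Si(a,b)$ as $\Proj(\ker\psi)$ for the evaluation map $\psi$, and reduce everything to showing that $\Omega^1_{\mb F_\p}(aF+bE)$ is globally generated. Where you diverge is in how you establish global generation. The paper argues pointwise: it uses that $\Aut(\mb F_\p)$ (via $\Aut(\Proj^2_\p)$) acts transitively on $\mb F_\p\setminus E$, and for points on $E$ it writes down two explicit pullback forms $\tilde\omega=\pi^*(x_2^bA(x_0,x_1)(x_1dx_0-x_0dx_1))$ and $\tilde\eta=\pi^*(x_2^{b-2}B(x_0,x_1)(\p x_2dx_0-x_0dx_2))$ whose values span the fiber, the degree constraints on $A,B$ being exactly where $a\geq b\p+2$ and $b\geq 2$ enter. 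You instead run a cohomological argument through the twisted relative cotangent sequence of the ruling, observing that both the sub- and quotient line bundles are nef (hence base-point-free on the toric surface $\mb F_\p$), and that $H^1$ of the sub vanishes so the diagram chase goes through. This is a cleaner and more portable argument; the paper's is more hands-on but makes the numerical hypotheses visibly sharp. You even flag the paper's method as your alternative at the end, so the two really coincide in spirit. For item~(2) you spell out the Hilbert-scheme argument that the paper simply outsources to \cite[Lemma~5.1]{Cou}; your version is the standard unpacking of that reference.
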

\begin{proof} 1. Let $\Sigma(a,b)$ denote the line bundle
$\Ol_{\mb F_\p}(aF+bE)$ on $\mb F_\p$. Consider the following exact
sequence of sheaves
\[
\xymatrix{0\ar[r]&\ker \psi \ar[r]^-{i}&
H^0(\mb F_\p,\Omega^1_{\mb F_\p}\otimes \Sigma(a,b)) \otimes
\Ol_{\mb F_\p}\ar[r]^-{\psi}& \Omega^1_{\mb F_\p}\otimes
\Sigma(a,b)},
\]
where $\psi(x,\eta)=\eta (x)$. Notice that 
if $\ker \psi$ is a vector bundle then $\Proj(\ker \psi)=\Si(a,b)$ is 
an irreducible variety of codimension two on $\mb F_\p\times \Ro(a,b)$. 
Thus, it suffices to prove $\ker
\psi$ is a vector bundle, which we do now. Since
\[
\pi_{|_{\mb F_\p\backslash E}}:\mb F_\p\backslash E\rightarrow\Proj^2_\p\backslash \{[0:0:1]\},
\] is an isomorphism and
the automorphism group of $\Proj^2_\p$ acts transitively on
$\Proj^2_\p\backslash \{[0:0:1]\}$, then the automorphism
group of $\Proj^2_\p$ acts transitively on
$\mb F_\p\backslash E$. Therefore $\ker\psi_x$ has dimension
$\dim_{\C}\Ro(a,b)-2$ for all $x\in \mb F_\p\backslash E$. We claim
that $\ker\psi_x$ also has dimension
$\dim_{\C}\Ro(a,b)-2$ for all $x\in E$. Indeed, we take two foliations $\F_1$,
$\F_2$ on $\Proj^2_\p$ with normal $\Q$-bundle of degree
$a$ induced by the 1-forms $\omega=x_2^b
A(x_0,x_1)(x_1dx_0-x_0dx_1)$ and
$\eta=x_2^{b-2}B(x_0,x_1)(\p x_2dx_0-x_0dx_2)$ respectively, where
$A$ and $B$ are homogeneous polynomials of degree $a-b\p-2$ and
$a-(b-1)\p-1$ that can be chosen such that the forms $\tilde{\omega}$ 
and $\tilde{\eta}$ defining  
$\pi^*(\F_1), \pi^*(\F_2)\in\Ro(a,b)$ are linearly independent at $x\in E$. 
Then any 1-form $\alpha\in \Ro(a,b)$ with  
$\alpha(x)\neq0$ can be written as a $\C$-linear combination of
$\tilde{\omega}$ and $\tilde{\eta}$.
Thus $\dim_{\C} \ker \psi_x $ is constant as a function
of $x\in X$, hence $\ker \psi$ is a vector bundle. 

2. Follows directly from \cite[Lemma 5.1, page 9]{Cou}.
\end{proof}

\begin{lemma}\label{L:setsingular_nonempty} Let $\G\in \Ro(a,b)$ be a foliation on $\mb F_\p$ and $C$
be an algebraic curve invariant by $\G$. If $b\geq3$ and $a\geq b\p+2$, then 
\[
C\cap \sing(\G)\neq\emptyset.
\]
\end{lemma}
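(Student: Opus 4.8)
The plan is to argue by contradiction: assuming that $C$ avoids $\sing(\G)$, I would show that $C$ must be a smooth compact leaf satisfying $\NG\cdot C=0$, which is incompatible with the positivity of $\NG$ forced by the hypotheses $b\geq 3$ and $a\geq bk+2$. After replacing $C$ by one of its irreducible components (a component of a curve disjoint from $\sing(\G)$ is again disjoint from it), I may assume $C$ is irreducible. Since a holomorphic foliation is nonsingular near every point of $C$, through each such point passes a unique smooth local leaf; hence $C$ is itself smooth and is a compact leaf of $\G$.

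The key step is to extract the two intersection-theoretic identities attached to such a leaf. First, along $C$ the foliation is regular, so restricting to $C$ the exact sequence $0\to T_{\G}\to T_{\mb F_\p}\to \NG\to 0$ (valid on $\mb F_\p\backslash\sing(\G)$, which contains $C$) and using that $T_{\G}|_C=T_C$, I identify $\NG|_C$ with the normal bundle $N_{C/\mb F_\p}=\Ol_{\mb F_\p}(C)|_C$; taking degrees yields $\NG\cdot C=C\cdot C$. Second, by the Camacho--Sad index theorem \cite{Brun1} one has $C\cdot C=\sum_{p\in C}CS(\G,C,p)$, and since $C$ meets no singularity of $\G$ this sum is empty, so $C\cdot C=0$. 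Combining the two gives $\NG\cdot C=0$.

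Finally I would derive the contradiction from positivity. The cone of curves of $\mb F_\p$ is generated by the fibre $F$ and the negative section $E$, so a divisor is ample as soon as it meets both positively. Here $\NG\cdot F=b\geq 3>0$ and $\NG\cdot E=a-bk\geq 2>0$, hence $\NG=aF+bE$ is an ample line bundle on the smooth surface $\mb F_\p$ and therefore $\NG\cdot C>0$ for every curve $C$, contradicting $\NG\cdot C=0$. The step I expect to require the most care is the bridge identity $\NG\cdot C=C\cdot C$: one must justify that the tangent sheaf of $\G$ restricts to $T_C$ \emph{precisely because} no singularity lies on $C$, and invoke Camacho--Sad in the correct smooth, compact, singularity-free setting; once this is in place, the ampleness computation is routine.
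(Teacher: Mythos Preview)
Your argument is correct and is essentially the paper's own proof: both deduce $C^2=0$ from Camacho--Sad and $\NG\cdot C=C^2$ from the absence of singularities on $C$ (the paper invokes the vanishing formula $\NG\cdot C=C^2+Z(\G,C)$ for this, which is exactly your tangent-sequence identification), and then contradict $\NG\cdot C>0$. The only cosmetic difference is that the paper splits off the case $C=E$ (where $C^2=-\p\neq 0$ already suffices) and computes $\NG\cdot C=n(a-b\p)+bm>0$ directly for $C=mF+nE$ with $m>0$, whereas you package the positivity as ampleness of $\NG$, giving a uniform treatment.
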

\begin{proof} We consider two cases:

1. If $C=E$ in $\Pic(\mb F_\p)$, then the Camacho-Sad formula gives $C^2=-\p$, which implies
 that $C\cap \sing(\G)\neq\emptyset$.

2. Suppose $C=mF+nE$ in $\Pic(\mb F_\p)$, with $m>0$,
$n\geq0$, and assume for contradiction that 
$C\cap \sing(\G)=\emptyset$. Then 
 $C^2=0$ by the Camacho-Sad formula, and from the vanishing formula
we obtain 
\[
\NG\cdot C=C^2+Z(\G,C)=0.
\]
But $\NG\cdot C=n(a-b\p)+bm>0$, and this is a contradiction.
\end{proof}

\subsection{Proof of Theorem \ref{T:Hirzebruch}}
Consider the second projection $\pi_2:\Si(a,b)\rightarrow \Ro(a,b)$
and fix a polynomial $\chi\in \Q[t]$ of degree one. Suppose that
$\pi_2(\Dl_{\chi}(a,b))=\Ro(a,b)$; that is, every foliation of
bidegree $(a,b)$ has an algebraic invariant curve with Hilbert
polynomial $\chi$. By Lemma \ref{L:setsingular_nonempty}, we have 
\[
\pi_2(\Dl_{\chi}(a,b)\cap \Si(a,b))=\pi_2(\Si(a,b)).
\]
Since $\Si(a,b)$ is an irreducible variety and
$\dim_{\C}\Si(a,b)=\dim_{\C} \Ro(a,b)$, by Proposition \ref{P:Hirreducibleset}
we get $\Si(a,b)\cap \Dl_{\chi}(a,b)=\Si(a,b)$.

To finish the proof we take the foliation $\F_1$ on
$\Proj^2_\p$ of Lemma \ref{L:excepcionalcase} 
which is induced by $\delta$ and has degree $2\p+1$. Let $\F$ be the foliation on
$\Proj^2_\p$ with normal $\Q$-bunlde of degree $a$ induced
by $\omega=x_0^{a-b\p+\p-1}x_2^{b-3}\delta$.
Note that the foliation $\G=\pi^*(\F)$ on $\mb F_\p$ has
bidegree $(a,b)$, and there is no invariant curve passing through the point 
$\pi^{-1}([1:1:1])$, which is
a singularity of $\G$. 
This is a contradiction. Because there are only countably many Hilbert
polynomials, this completes the proof the theorem.
\qed

\section*{Acknowledgements}
The author wishes to express his deepest gratitude to Jorge Vit\'orio Pereira for lots 
of fruitful discussions about the content of this paper.
 The author also wishes to thank Maycol Falla Luza and Raphael Constant da Costa 
 for the suggestions and comments on the ma\-nus\-cript. 
 This work was developed at IMPA (Rio de Janeiro, Brazil) and partially supported by CNPq and CAPES.

\bibliography{references}{}
\bibliographystyle{plain}
\end{document}